\theoremstyle{plain}
\newtheorem{thm}{Theorem}[section]
\newtheorem*{thm*}{Theorem}
\newtheorem*{cor*}{Corollary}
\newtheorem{prop}[thm]{Proposition}
\newtheorem{lem}[thm]{Lemma}
\newtheorem{cor}[thm]{Corollary}
\newtheorem{claim}{Claim}
\newtheorem*{claim*}{Claim}
\theoremstyle{definition}
\newtheorem{definition}[thm]{Definition}
\newtheorem{ex}[thm]{Example}
\newtheorem{remark}[thm]{Remark}
\theoremstyle{remark}
\numberwithin{equation}{thm}
\def\Min{\operatorname{Min}}
\def\Im{\operatorname{Im}}
\def\Hom{\operatorname{Hom}}
\def\Max{\operatorname{Max}}
\def\Assh{\operatorname{Assh}}
\def\End{\mathrm{End}}
\def\grade{\operatorname{grade}}
\def\mod{\mathrm{mod}}
\def\m{\mathfrak m}
\def\n{\mathfrak n}
\def\H{\mathrm{H}}
\newcommand{\Ann}{\mathrm{Ann}}
\newcommand{\rmi}{\mathrm{i}}
\newcommand{\rmr}{\mathrm{r}}
\newcommand{\rmE}{\mathrm{E}}
\newcommand{\rmH}{\mathrm{H}}
\newcommand{\rmK}{\mathrm{K}}
\newcommand{\rmQ}{\mathrm{Q}}
\newcommand{\rmU}{\mathrm{U}}
\newcommand{\rmV}{\mathrm{V}}
\newcommand{\rmW}{\mathrm{W}}
\newcommand{\calF}{\mathcal{F}}
\newcommand{\calR}{\mathcal{R}}
\newcommand{\fka}{\mathfrak{a}}
\newcommand{\fkc}{\mathfrak{c}}
\newcommand{\fkp}{\mathfrak{p}}
\newcommand{\fkq}{\mathfrak{q}}
\newcommand{\mapright}[1]{%
\smash{\mathop{%
\hbox to 1cm{\rightarrowfill}}\limits^{#1}}}
\newcommand{\mapleft}[1]{%
\smash{\mathop{%
\hbox to 1cm{\leftarrowfill}}\limits_{#1}}}
\def\depth{\operatorname{depth}}
\def\AGL{\operatorname{AGL}}
\def\Ass{\operatorname{Ass}}
\def\Tr{\operatorname{Tr}}
\def\height{\mathrm{ht}}
\def\Spec{\operatorname{Spec}}
\title[When are the rings $I:I$ Gorenstein?]{When are the rings $I:I$ Gorenstein?}
\author[Naoki Endo]{Naoki Endo}
\address{Department of Mathematics, Faculty of Science Division II, Tokyo University of Science, 1-3 Kagurazaka, Shinjuku, Tokyo 162-8601, Japan}
\email{nendo@rs.tus.ac.jp}
\urladdr{https://www.rs.tus.ac.jp/nendo/}
\author[Shiro Goto]{Shiro Goto}
\address{Department of Mathematics, School of Science and Technology, Meiji University, 1-1-1 Higashi-mita, Tama-ku, Kawasaki 214-8571, Japan}
\email{shirogoto@gmail.com}
\author[Shin-ichiro Iai]{Shin-ichiro Iai}
\address{Mathematics laboratory, Sapporo College, Hokkaido University of Education, 1-3 Ainosato 5-3, Kita-ku, Sapporo 002-8502, Japan}
\email{iai@sap.hokkyodai.ac.jp}
\author[Naoyuki Matsuoka]{Naoyuki Matsuoka}
\address{Department of Mathematics, School of Science and Technology, Meiji University, 1-1-1 Higashi-mita, Tama-ku, Kawasaki 214-8571, Japan}
\email{naomatsu@meiji.ac.jp}
\thanks{2020 {\em Mathematics Subject Classification.} 13H10, 13A02, 13A15.}
\thanks{{\em Key words and phrases.} Gorenstein ring, Rees algebra, $(S_2)$-ification, Buchsbaum ring, trace ideal, canonical module, local cohomology}
\thanks{The first author was partially supported by JSPS Grant-in-Aid for Young Scientists 20K14299. 
The second author was partially supported by JSPS Grant-in-Aid for Scientific Research (C) 21K03211. }
\begin{document}

\maketitle

\setlength{\baselineskip} {15.5pt}

\begin{abstract}
Let $I ~(\ne A)$ be an ideal of a $d$-dimensional Noetherian local ring $A$ with $\height_AI \ge 2$, containing a non-zerodivisor. The problem of when the ring $I:I=\End_AI$ is Gorenstein is studied, in connection with the problem of the Gorensteinness in Rees algebras $\calR_A(Q^d)$ for certain parameter ideals $Q$ of $A$, that was closely explored by the preceding paper \cite{GI} of the authors. Examples are given.
\end{abstract}

%{\footnotesize \tableofcontents}

\section{Introduction}
Let $(A,\m)$ be a Noetherian local ring with $d=\dim A \ge 2$ and $t=\depth A \ge 1$. For each ideal $I$ of $A$ we set $\calR_A(I) = \bigoplus_{n \ge 0}I^n$ and call it the Rees algebra of $I$. In the previous paper \cite{GI}, the authors are particularly interested in the question of when the Rees algebra $\calR_A(Q^N)$ is a Gorenstein ring, where $Q$ is a parameter ideal of $A$ and $N \ge 1$. This question is already settled, when $A$ is a Cohen-Macaulay ring, and it is known that $N=d$ or $d-1$ (\cite[Theorem (1.2)]{GS}, \cite[Lemma 2.4]{Hoa}). Nevertheless, even though $A$ is not a Cohen-Macaulay ring and $N =d \ge 2$, the Rees algebra $\calR_A(Q^d)$ can be a Gorenstein ring. As far as we know, the first result was reported by Y. Shimoda around 30 years ago in his seminar talk at Meiji University, who showed that provided $d=2$ and $t=1$, $\calR_A((a,b)^2)$ is a Gorenstein ring, under certain specific conditions on the system $a,b$ of parameters of $A$. His argument was recently rediscovered and motivated the researches \cite{GI}. Together with many other results, the authors succeeded in providing a complete generalization of Shimoda's theorem in the following way.

\begin{thm}[{\cite[Theorem 1.1]{GI}}]\label{main0}
Assume that $\H^i_\m(A)=(0)$ for all $i \not\in \{1,d\}$ and $\H^1_\m(A)$ is a finitely generated $A$-module. Let $Q=(a_1, a_2, \ldots, a_d)$ be a parameter ideal of $A$. Then the following conditions are equivalent.
\begin{enumerate}[$(1)$]
	\item $\calR_A(Q^d)$ is a Gorenstein ring.
	\item $\rmH_\m^1(A) \ne (0)$, $\rmr_A(\rmH_\m^1(A))=1$, and $(0):_A\H_\m^1(A)= \sum_{i=1}^d\rmU(a_iA)$.
\end{enumerate}
When this is the case, the $(S_2)$-ification $\widetilde{A}$ of $A$ is a Gorenstein ring.
\end{thm}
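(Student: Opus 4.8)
We outline a possible approach.

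After completing $A$ we may write $A=R/I_0$ with $R$ a Gorenstein local ring of dimension $n$, so that $A$ has a canonical module $\omega_A=\Ext_R^{n-d}(A,R)$ and deficiency modules $\rmK^i_A:=\Ext_R^{n-i}(A,R)\cong\H^i_\m(A)^\vee$. The hypotheses say exactly that $\rmK^i_A=0$ for $i\notin\{1,d\}$ and $\ell_A(\rmK^1_A)<\infty$; thus $A$ is a generalized Cohen--Macaulay ring, hence equidimensional, $A_\p$ is Cohen--Macaulay for every prime $\p\ne\m$, and the $(S_2)$-ification $\widetilde A$ exists with $\ell_A(\widetilde A/A)<\infty$. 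Put $\calR=\calR_A(Q^d)$, a graded ring of dimension $d+1$ with graded maximal ideal $\mathfrak{M}=\m\calR+\calR_+$. Since a graded ring is Gorenstein precisely when it is Cohen--Macaulay and its graded canonical module is free of rank one up to a degree shift, the theorem reduces to: (i) describing when $\H^i_{\mathfrak{M}}(\calR)=0$ for $i\le d$, and (ii) in that case computing $\omega_{\calR}$ as a graded $\calR$-module.

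The heart of the matter is a degree-by-degree computation of $\H^i_{\mathfrak{M}}(\calR)$. I would combine the standard exact sequences $0\to\calR_+\to\calR\to A\to 0$ and $0\to\calR_+(1)\to\calR\to\mathrm{gr}_{Q^d}(A)\to 0$ (or, equivalently, pass to the extended Rees algebra $A[Q^dt,t^{-1}]$ and its quotient $\mathrm{gr}_{Q^d}(A)$) with the Sancho de Salas sequence, which for $n\ge1$ reconstructs $[\H^i_{\mathfrak{M}}(\calR)]_n$ from the local cohomologies $\H^\bullet_\m(Q^{dn})$ and for $n\le 0$ from $\H^\bullet_\m(A/Q^{d|n|})$ together with $\H^\bullet_\m(A)$. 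Because $a_1,\dots,a_d$ is a system of parameters, $\H^1_\m(A)$ has finite length (so $Q^{dn}$ annihilates it for $n\gg 0$) and the intermediate $\H^i_\m(A)$ all vanish, the $Q^d$-adic filtration collapses every one of these local cohomology modules to data read off from $\H^1_\m(A)$ and $\H^d_\m(A)$. In particular the only obstruction to $\H^i_{\mathfrak{M}}(\calR)=0$ for $i\le d$ is carried by $\H^1_\m(A)$, and I expect it to vanish exactly when $(0):_A\H^1_\m(A)=\sum_{i=1}^d\rmU(a_iA)$; this equality says that, modulo the defect $\H^1_\m(A)$, the sequence $a_1,\dots,a_d$ behaves like a regular sequence at the $d$-th power, which is why the exponent $d$ is the relevant one. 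Carrying out this computation, and especially controlling the negative-degree contributions coming from $\mathrm{gr}_{Q^d}(A)$, is the step I expect to be the main obstacle.

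Assuming $\calR$ is Cohen--Macaulay under that condition, the same computation yields $\H^{d+1}_{\mathfrak{M}}(\calR)$ and hence, by graded local duality, $\omega_{\calR}$: it is an ideal of $\calR$ concentrated in one generating degree, and its minimal number of generators equals the Cohen--Macaulay type of $\calR$, which the computation identifies with $\rmr_A(\H^1_\m(A))$, the socle dimension of the defect module. Thus $\omega_{\calR}$ is free of rank one, i.e. $\calR$ is Gorenstein, if and only if $\rmr_A(\H^1_\m(A))=1$; together with the Cohen--Macaulay criterion this gives $(1)\Leftrightarrow(2)$. (If $\H^1_\m(A)=0$ then $A$ is Cohen--Macaulay, the case already understood classically, and condition $(2)$ correctly excludes this by requiring $\H^1_\m(A)\ne 0$.)

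Finally assume $(1)$. From $0\to A\to\widetilde A\to\widetilde A/A\to 0$ with $\widetilde A/A$ of finite length and $\H^0_\m(A)=0$, the local cohomology sequence gives $\widetilde A/A\cong\H^1_\m(A)$, $\depth\widetilde A\ge 2$, and $\H^i_\m(\widetilde A)\cong\H^i_\m(A)=0$ for $2\le i\le d-1$; hence $\widetilde A$ is Cohen--Macaulay of dimension $d$. Applying $\mathbf{R}\Hom_R(-,R)$ to the same sequence and using that the finite-length module $\widetilde A/A$ has its only nonzero deficiency module in the extreme degree, one obtains $\omega_{\widetilde A}\cong\Hom_A(\widetilde A,\omega_A)\cong\omega_A$ as $\widetilde A$-modules, the $\widetilde A$-structure being the canonical one via $\widetilde A=\End_A(\omega_A)$. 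So $\widetilde A$ is Gorenstein if and only if this module is cyclic, i.e. $\widetilde A$ has type $1$; it is a rank-one reflexive $\widetilde A$-module locally free on the punctured spectrum (by $(1)$, $A$, and hence $\widetilde A$, are Gorenstein in codimension $\le 1$), so it is free precisely when $\widetilde A$ is Gorenstein. The remaining input — that $\widetilde A$ is indeed Gorenstein — I would extract from $(2)$: the Cohen--Macaulayness and type-$1$ conclusions for $\calR$, transported through the isomorphism $\omega_{\widetilde A}\cong\omega_A$ and using the ideal-theoretic condition to control the conductor $A:_A\widetilde A$, should force $\omega_A$ to be free of rank one over $\widetilde A$. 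I expect this last point to require the full strength of $(2)$ and to be the subtlest part of this step.
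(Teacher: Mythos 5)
This theorem is not proved in the present paper at all: it is imported verbatim from \cite[Theorem 1.1]{GI} and used as a black box (e.g.\ in the proof of Theorem \ref{GI}). So there is no in-paper argument to compare against, and your proposal has to stand on its own. As written it does not: it is a research plan in which every step carrying the actual content of the theorem is deferred with ``I expect'' or ``should force''. The choice of machinery (Sancho de Salas sequence, graded local duality, computation of $\omega_{\calR}$) is reasonable, but none of the three substantive implications is actually established.

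Concretely. (a) The distinctive ingredient of condition $(2)$, the sum of unmixed components $\sum_{i=1}^d\rmU(a_iA)$, never enters your computation: you assert that the vanishing of $\H^i_{\mathfrak{M}}(\calR)$ for $i\le d$ is ``expected'' to be equivalent to $(0):_A\H_\m^1(A)=\sum_{i=1}^d\rmU(a_iA)$, but you give no mechanism producing the modules $\rmU(a_iA)$ from the local cohomology of the powers $Q^{dn}$. (The mechanism visible elsewhere in this paper is that $\rmU(a_iA)=a_i\widetilde{A}$ once $\widetilde{A}\subseteq a_i^{-1}A$, by Corollary \ref{1.6}, so the right-hand side is $Q\widetilde{A}$ and the condition compares the conductor $(0):_A\H^1_\m(A)=A:\widetilde{A}$ with $Q\widetilde{A}$; nothing of this sort appears in your outline.) (b) You silently decouple condition $(2)$ into ``Cohen--Macaulayness of $\calR$ $\Leftrightarrow$ the annihilator equality'' and ``type one $\Leftrightarrow$ $\rmr_A(\H_\m^1(A))=1$''. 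The theorem only asserts that Gorensteinness is equivalent to the conjunction; the graded canonical module of $\calR$ mixes contributions from $\H^d_\m(A)$ and from the defect $\H^1_\m(A)$, so this decoupling is itself a nontrivial claim that you neither state precisely nor justify. (c) For the final assertion that $\widetilde{A}$ is Gorenstein, your reduction to ``$\omega_A$ is free of rank one over $\widetilde{A}$'' is sensible and the intermediate facts ($\widetilde{A}/A\cong\H^1_\m(A)$, $\widetilde{A}$ Cohen--Macaulay, $\omega_{\widetilde{A}}\cong\omega_A$) are standard, but the decisive step --- deducing freeness from $(2)$ --- is exactly the one you leave open. Until (a)--(c) are carried out, the proposal is an outline of where a proof might live, not a proof.
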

\noindent
Here, $\rmH_\m^i(*)$ stands for the local cohomology functor of $A$ with respect to $\m$, $\rmr_A(M)$ denotes, for each Cohen-Macaulay $A$-module $M$, the Cohen-Macaulay type of $M$, and $\rmU(a_iA)$ denotes the unmixed component of the ideal $a_iA$ in $A$. Added to it, they showed that once $\calR_A(Q^d)$ is a Gorenstein ring for some parameter ideal $Q$ of $A$ and the $(S_2)$-ification $\widetilde{A}$ is a Cohen-Macaulay ring, the basic hypothesis that $\H^i_\m(A)=(0)$ for all $i \not\in \{1,d\}$ and the $A$-module $\H^1_\m(A)$ is finitely generated should be naturally satisfied.

The theorems in \cite{GI}, especially the above theorem \ref{main0}, give a clear characterization for $\calR_A(Q^d)$ to be a Gorenstein ring, establishing a generalization of Shimoda's result. Nevertheless, it should be noticed here that it seems not so easy to provide ample examples of $A$ and $Q$, which satisfy Condition (2) of Theorem \ref{main0}. Because the $(S_2)$-ification $\widetilde{A}$ of $A$ is a Gorenstein ring once $\calR_A(Q^d)$ is Gorenstein, there might be a different approach towards the construction of new examples, based on the Gorensteinness of $(S_2)$-ifications, and if we can do this, it could provide a further viewpoint,  not only for the theory of Gorenstein Rees algebras, but also for the study of $(S_2)$-ifications. This expectation has strongly motivated the present researches.

Let us now state our own results, explaining how this paper is organized. To develop our arguments, we need some basic results on $(S_2)$-ifications and those on trace ideals as well, which we shall briefly summarize in Section 2. In Section 3, we shall discuss the problem of when the rings $I:I$ are Gorenstein rings, where $I~(\ne A)$ is an ideal of $A$ with $\height_AI \ge 2$, containing a non-zerodivisor of $A$. We always consider the colon $I:I$ inside the total ring $\rmQ(A)$ of fractions of $A$, whence $I:I \cong \End_AI$ as an $A$-algebra. The $A$-algebra $I:I$ is closely related to the $(S_2)$-ification $\widetilde{A}$ of $A$, which we will explain in Section 3, and eventually we shall give ample concrete examples of the local rings $A$, which possess Gorenstein Rees algebras $\calR_A(Q^d)$ for some parameter ideals $Q$.

Section 4 is devoted to more constructions of the ideals $I$, for which $I:I =\widetilde{A}$ and $I:I$ is a regular ring. We will leave in this section the problem on the Gorensteinness in Rees algebras. Instead, we are interested in finding what values the number $t = \depth A$ can take, when $I:I$ is a Gorenstein ring. In general, we have $0 < t < d$, and for $t=1,2$ we can construct the rings $A$ and ideals $I ~(\ne A)$ such that $\depth A =t$, $\height_AI \ge 2$, and $I:I$ is a regular ring. Although we can show the case where $t = 3$ is also possible, we are not able to cover all the cases of $0 < t < d$. We would like to leave the remainder cases to the interested readers.

%%%%%%%%%%%%%%%%%%%%%%%%%%%%%%%%%%%%%%%%%%%%%%%%%%%%%%%%%%%%%%%%%%%%%%%%%%%%%%%%%%%%%%%%%%%%%%%%%%%%%%%%%%%%%%%%%%%%%%%%%%%%%%%%%%%%%%%%%%%%%%%%%%%%%%%%%%%%%%%%%%%%%%%%%%%%%%%%%%%%%%%%%%%%%%%%%%%%%%%%%%%%%%%%

\section{Preliminaries}

\subsection{Some basic results on $(S_2)$-ifications}

Let $R$ be an arbitrary commutative ring and let $\rmQ(R)$ denote the total ring of fractions of $R$. We set \begin{itemize}
\item $\operatorname{Ht}_{\ge 2}(R)$=$\{I \mid I~\text{is~an~ideal~of}~R, \height_R I \ge 2 \}$ and 
\item $\rmW(R)=\{a \in R  \mid a~\text{is~a~non-zerodivisor~of}~R \}$. 
\end{itemize}
Throughout, let us fix a $\rmQ(R)$-module $V$ and an $R$-submodule $M$ of $V$. 
\begin{definition}\label{1.1a} $\widetilde{M} = \{f \in V \mid If \subseteq M \ \text{for~some}~I \in \operatorname{Ht}_{\ge 2}(R) \}$. 
\end{definition}
\noindent
Hence, if $L$ is an $R$-submodule of $V$ and $M \subseteq L$, then $\widetilde{M} \subseteq \widetilde{L}$. We call $\widetilde{M}$ the $(S_2)$-ification of $M$. In fact, $\widetilde{M}$ is an $R$-submodule of $V$ containing $M$. In particular, $\widetilde{R}$ considered inside $\rmQ(R)$ is an intermediate ring $R \subseteq \widetilde{R} \subseteq \rmQ(R)$. Notice that 
$$
\widetilde{R}=\{f \in \rmQ(R) \mid \height_R[R:_Rf] \ge 2\}
$$
and that $\widetilde{M}$ is an $\widetilde{R}$-submodule of $V$ also. For $a \in \rmW(R)$, $x \in V$, and an $R$-submodule of $V$, let us denote $\frac{x}{a}=a^{-1}x$ and $\frac{X}{a} = a^{-1}X$ in $V$. We then have, for all $a,b \in \rmW(R)$, that $$\frac{aM : b}{a}= \frac{M}{a} \cap \frac{M}{b}.$$

\begin{lem}\label{1.1}
Suppose that $R$ is a Noetherian ring, $\fkp \in \Spec R$, and $a \in \rmW(R)$. If $a \in \fkp$ and $\height_R\fkp \ge 2$, then $\height_R (a,b) \ge 2$ for some $b \in \rmW(R) \cap \fkp$.
\end{lem}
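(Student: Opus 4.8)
The plan is to produce the desired element $b$ by a prime-avoidance argument inside the prime ideal $\fkp$. We know $\height_R\fkp\ge 2$, so $\fkp$ is not a minimal prime of $R$, and in particular $\fkp$ is not contained in any minimal prime of $R$. We would like to choose $b\in\fkp$ that is a non-zerodivisor and such that $(a,b)$ has height at least $2$; the latter amounts to requiring that $(a,b)$ is not contained in any height-$1$ prime of $R$, equivalently, that $b$ avoids every minimal prime of the principal-like ideal $aR$ — more precisely, every prime $\fkq$ with $a\in\fkq$ and $\height_R\fkq\le 1$.

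First I would identify the finite set of primes to avoid. Since $R$ is Noetherian and $a$ is a non-zerodivisor, the associated primes of $R$ are finitely many and none of them contains $a$; and the primes $\fkq\in\Spec R$ with $a\in\fkq$ and $\height_R\fkq=1$ are exactly the minimal primes of $aR$, of which there are finitely many, say $\fkq_1,\dots,\fkq_r$ (these are the height-one primes by Krull's principal ideal theorem). I would then check that $\fkp$ is not contained in the union of $\Ass R$ together with $\fkq_1,\dots,\fkq_r$: indeed $\fkp\not\subseteq\fkq_i$ for each $i$ because $\height_R\fkq_i=1<2\le\height_R\fkp$, and $\fkp\not\subseteq\fkq$ for any $\fkq\in\Ass R$ since such $\fkq$ does not contain the non-zerodivisor $a\in\fkp$. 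By prime avoidance there exists
\[
b\in\fkp\setminus\Bigl(\,\bigcup_{\fkq\in\Ass R}\fkq\ \cup\ \bigcup_{i=1}^{r}\fkq_i\,\Bigr).
\]
Such a $b$ lies outside every associated prime of $R$, hence $b\in\rmW(R)$, and $b\in\fkp$ by construction.

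It remains to verify $\height_R(a,b)\ge 2$. Suppose not; then $(a,b)$ is contained in some prime $\fkq$ with $\height_R\fkq\le 1$. Since $a$ is a non-zerodivisor, $\fkq$ is not minimal, so $\height_R\fkq=1$ and $a\in\fkq$, whence $\fkq$ is a minimal prime of $aR$, i.e. $\fkq=\fkq_i$ for some $i$. But then $b\in\fkq_i$, contradicting the choice of $b$. Hence $\height_R(a,b)\ge 2$, and $b$ has all the required properties. The only point demanding a little care — and the main obstacle — is the finiteness of the set $\{\fkq_1,\dots,\fkq_r\}$ of height-one primes containing $a$; this is where I invoke that $R$ is Noetherian so that $aR$ has only finitely many minimal primes, and Krull's principal ideal theorem to see these have height exactly one. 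Everything else is routine prime avoidance.
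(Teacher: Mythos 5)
Your proof is correct and follows essentially the same route as the paper, which simply notes that $\fkp \not\subseteq \bigcup_{P \in \Min_R R/aR}P \cup \bigcup_{P \in \Ass R}P$ and leaves the rest implicit. You have merely written out in full the prime-avoidance details that the paper compresses into one line.
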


\begin{proof}
Notice that $\fkp \not\subseteq \bigcup_{P \in \Min_RR/aR}P \cup \bigcup_{P \in \Ass R}P$.
\end{proof}

Let $a,b \in R$ and $N$ an $R$-module. We say that the sequence $a,b$ is $N$-regular, if $a$ is $N$-nonzerodivisor and $b$ is $N/aN$-nonzerodivisor. So, we don't require that $N/(a,b)N \ne (0)$.

\begin{lem}\label{1.1.1}
Let $a,b \in \rmW(R)$. If $\height_R(a,b) \ge 2$, then the sequence $a,b$ is $\widetilde{M}$-regular.
\end{lem}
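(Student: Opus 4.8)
The plan is to verify directly the two conditions defining $\widetilde{M}$-regularity. The first, that $a$ is a non-zerodivisor on $\widetilde{M}$, is essentially free: by the remarks preceding the statement, $\widetilde{M}$ is an $R$-submodule of the $\rmQ(R)$-module $V$, and $a \in \rmW(R)$ is a unit of $\rmQ(R)$, so multiplication by $a$ is bijective on $V$ and in particular injective on $\widetilde{M}$. Note that this part uses nothing about the height of $(a,b)$.

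The content is the second condition, that $b$ is a non-zerodivisor on $\widetilde{M}/a\widetilde{M}$. I would take $x \in \widetilde{M}$ with $bx \in a\widetilde{M}$ and aim to conclude $x \in a\widetilde{M}$; since $a$ acts invertibly on $V$, this is equivalent to $a^{-1}x \in \widetilde{M}$. Writing $bx = ay$ with $y \in \widetilde{M}$, and choosing $I, I' \in \operatorname{Ht}_{\ge 2}(R)$ with $Ix \subseteq M$ and $I'y \subseteq M$, the natural candidate witness for $a^{-1}x \in \widetilde{M}$ is the ideal $J = aI + bI'$, because
$$
Jx = a(Ix) + I'(bx) = a(Ix) + a(I'y) \subseteq aM,
$$
whence $J \cdot (a^{-1}x) \subseteq M$. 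It then remains to check $\height_R J \ge 2$, and this is the one step where the hypothesis is used: distributing $\Var(aI) = \Var(aR) \cup \Var(I)$ and $\Var(bI') = \Var(bR) \cup \Var(I')$ gives
$$
\Var(J) = \Var((a,b)) \cup \Var(aR+I') \cup \Var(I+bR) \cup \Var(I+I'),
$$
and each of these closed subsets of $\Spec R$ has height at least $2$ --- the first by the hypothesis $\height_R(a,b) \ge 2$, the other three because they lie inside $\Var(I)$ or inside $\Var(I')$. Thus $a^{-1}x \in \widetilde{M}$, so $x \in a\widetilde{M}$, which finishes the argument.

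I do not expect a serious obstacle: morally the lemma says only that the defining property of $\widetilde{M}$ is preserved under colon by a non-zerodivisor, and the one slightly delicate point is the height bookkeeping for $J = aI + bI'$, together with keeping track of the substitution $bx = ay$ in the computation of $Jx$. One could alternatively organize the second part around the relation $\frac{a\widetilde{M}:b}{a} = \frac{\widetilde{M}}{a} \cap \frac{\widetilde{M}}{b}$ recorded just before the statement, which reduces matters to the inclusion $a^{-1}\widetilde{M} \cap b^{-1}\widetilde{M} \subseteq \widetilde{M}$ inside $V$ --- and this too is witnessed by the same ideal $J$.
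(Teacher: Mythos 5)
Your proof is correct and follows essentially the same route as the paper: the paper's argument also reduces to showing $a^{-1}f\in\widetilde{M}$ for $f\in\widetilde{M}$ with $bf=ag$, and uses exactly your witness ideal $aI+bI'$ (written there as $Ia+Jb$ with $If\subseteq M$, $Jg\subseteq M$), asserting without further comment that it lies in $\operatorname{Ht}_{\ge 2}(R)$. The only difference is that you spell out that height verification and the trivial first condition (injectivity of $a$ on $\widetilde{M}$) explicitly, both of which the paper leaves to the reader.
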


\begin{proof}
Let $f \in \widetilde{M}$ and assume that $bf=ag$ for some $g \in \widetilde{M}$. We set $x = \frac{f}{a}=\frac{g}{b}$, and choose $I,J \in \operatorname{Ht}_{\ge 2}(R)$ so that $If + Jg \subseteq M$. Then, since $Iax+ Jbx \subseteq M$, we get $(Ia + Jb)x \subseteq M$, whence $x \in \widetilde{M}$ because $Ia + Jb \in \operatorname{Ht}_{\ge 2}(R)$.
\end{proof}

\begin{prop}\label{1.2}
Let $R$ be a Noetherian ring and suppose that one of the following conditions is satisfied.
\begin{enumerate}
\item[$(1)$] $\rmQ(R)M=V$.
\item[$(2)$] $\height_R\fkp \le 1$ for every $\fkp \in \Ass R$.
\end{enumerate}
Then $M=\widetilde{M}$ if and only if every pair $a,b\in \rmW(R)$ with $\height_R(a,b) \ge 2$ is $M$-regular.
\end{prop}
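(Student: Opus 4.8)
I would first dispose of the forward implication, which is immediate: if $M=\widetilde M$ and $a,b\in\rmW(R)$ satisfy $\height_R(a,b)\ge2$, then by Lemma~\ref{1.1.1} the sequence $a,b$ is $\widetilde M$-regular, hence $M$-regular. The substance is the converse, so the plan is, assuming every pair $a,b\in\rmW(R)$ with $\height_R(a,b)\ge2$ is $M$-regular, to show $\widetilde M\subseteq M$ (the reverse inclusion being automatic). Fix $f\in\widetilde M$ and set $\fkc=\{r\in R\mid rf\in M\}$; this is an ideal of $R$, and by the description of $\widetilde M$ recalled in Section~2 we have $\height_R\fkc\ge2$. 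If $\fkc=R$ then $f\in M$ and there is nothing to prove, so assume $\fkc\ne R$.

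The key step will be to manufacture two elements $a,b\in\fkc\cap\rmW(R)$ with $\height_R(a,b)\ge2$. Once this is done, the hypothesis says the sequence $a,b$ is $M$-regular, so $\{m\in M\mid bm\in aM\}=aM$; feeding this into the identity $\frac{aM:b}{a}=\frac Ma\cap\frac Mb$ from Section~2 gives $\frac Ma\cap\frac Mb=\frac{aM}{a}=M$. Since $a,b\in\fkc$ forces $af,bf\in M$, i.e. $f\in\frac Ma\cap\frac Mb$, we obtain $f\in M$, as wanted.

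To build the pair, I would first secure a single $a\in\fkc\cap\rmW(R)$, and this is the only place where hypotheses (1) and (2) intervene. Under (1), clearing denominators in an expression $f=\sum q_im_i$ with $q_i\in\rmQ(R)$ and $m_i\in M$ writes $f=a^{-1}m$ with $a\in\rmW(R)$, $m\in M$, so $af=m\in M$ and $a\in\fkc$. Under (2), every $\fkq\in\Ass R$ satisfies $\height_R\fkq\le1<\height_R\fkc$, so $\fkc\not\subseteq\fkq$; since $\Ass R$ is finite, prime avoidance yields an element of $\fkc$ outside all associated primes, i.e. a nonzerodivisor. Having fixed $a$ (and assuming $a$ a nonunit, else $f=a^{-1}(af)\in M$ directly), each $\fkp\in\Min_R(R/aR)$ has height $1$ by Krull's principal ideal theorem, so $\fkc\not\subseteq\fkp$, while $\fkc\not\subseteq\fkq$ for $\fkq\in\Ass R$ because $a\in\fkc\setminus\fkq$. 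Applying prime avoidance to the finite family $\Min_R(R/aR)\cup\Ass R$ — precisely the computation in the proof of Lemma~\ref{1.1} — produces $b\in\fkc$ avoiding all of them; then $b\in\rmW(R)$, and any prime containing $(a,b)$ strictly contains some $\fkp\in\Min_R(R/aR)$ and so has height $\ge2$, giving $\height_R(a,b)\ge2$.

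The one genuinely delicate point is the production of $a$, i.e. checking that $\fkc$ meets $\rmW(R)$: in general $\fkc$ might lie inside an embedded prime of height $\ge2$ and contain no nonzerodivisor at all, and it is exactly here that one of the two standing hypotheses is indispensable. Everything else — the prime-avoidance manufacture of $b$ and the final cancellation — is routine given the preliminaries of Section~2.
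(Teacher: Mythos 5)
Your proof is correct, and up to the point of producing the pair $a,b$ it coincides with the paper's: both arguments use hypothesis (1) or (2) solely to find one non-zerodivisor $a$ in the conductor of $f$, and then prime avoidance over the finite set $\Min_R(R/aR)\cup\Ass R$ to find the second element $b$ (the paper packages this second step as Lemma \ref{1.1}, applied to a prime $\fkp\in\Ass_R(\widetilde{M}/M)$ with $\fkp=M:_Rf$, whereas you apply prime avoidance directly to the ideal $\fkc=M:_Rf$, which works just as well and spares you the reduction to associated primes of the possibly non-finitely-generated quotient $\widetilde{M}/M$). Where you genuinely diverge is the finishing move. The paper applies the snake lemma to multiplication by $a$ on $0\to M\to\widetilde{M}\to Z\to 0$, uses Lemma \ref{1.1.1} to see that $a$ is $\widetilde{M}$-regular so that the connecting map $(0):_Za\to M/aM$ is injective, and then kills the class of $f$ using that $b$ is $M/aM$-regular. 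You instead invoke the elementary identity $\frac{aM:b}{a}=\frac{M}{a}\cap\frac{M}{b}$ recorded in Section 2: $M$-regularity of the pair gives $aM:_Mb=aM$, the intersection collapses to $M$, and $af,bf\in M$ places $f$ in it. Your route avoids the homological step and the appeal to Lemma \ref{1.1.1} entirely, and yields the pointwise conclusion $f\in M$ for every $f\in\widetilde{M}$ directly; both arguments are complete.
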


\begin{proof}
It suffices to prove the {\em if} part. Assume that $M \ne \widetilde{M}$ and consider the exact sequence 
$$(\rmE) \ \ \ \ \ \ 0 \to M \to \widetilde{M} \to Z \to 0$$
of $R$-modules, where $Z = \widetilde{M}/M$. Let $\fkp \in \Ass_RZ$. Hence, $\fkp = M:_Rf$ for some $f \in \widetilde{M}$. Choose $I \in \operatorname{Ht}_{\ge 2}(R)$ so that $If \subseteq M$. We then have $I \subseteq \fkp$. Notice that, if Condition (1) is satisfied, $af \in M$ for some $a \in \rmW(R)$, and if Condition (2) is satisfied, $I \not\subseteq \bigcup_{P \in \Ass R}P$. In any case, we get $af \in M$ for some $a \in \rmW(R)$. Therefore, by Lemma \ref{1.1}, $\height_R(a,b) \ge 2$ for some $b \in \rmW(R) \cap \fkp$, whence by Lemma \ref{1.1.1},  the sequence $a,b$ is $M$-regular. Therefore, by Sequence (E), we get the long exact sequence
$$
0 \to (0):_Za \overset{\sigma}{\to} M/aM \to \widetilde{M}/a\widetilde{M} \to Z/aZ \to 0
$$
where $$b\sigma(f~\mod~M) =\sigma(bf~\mod~M)= 0$$ because $bf \in M$. Therefore, $\sigma(f~\mod~M)=0$ since $b$ is a non-zerodivisor for $M/aM$, whence $f \in M$. This is impossible. Thus $M = \widetilde{M}$.
\end{proof}

\begin{cor}\label{1.3}
If $R$ is a Noetherian ring and satisfies the condition $(S_2)$ of Serre, then $R =\widetilde{R}$. 
\end{cor}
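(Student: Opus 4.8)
The plan is to derive this directly from Proposition \ref{1.2}. I would apply that proposition with $V = \rmQ(R)$ and $M = R$, so that $\widetilde{M}$ is exactly the ring $\widetilde{R}$ of Definition \ref{1.1a}. Condition $(1)$ there is automatic, since $\rmQ(R)\cdot R = \rmQ(R) = V$; hence it suffices to verify that every pair $a, b \in \rmW(R)$ with $\height_R(a,b) \ge 2$ is $R$-regular, i.e.\ that $b$ is a non-zerodivisor on $R/aR$ whenever $a, b$ are non-zerodivisors of $R$ generating an ideal of height $\ge 2$.

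To check this, I would argue by contradiction. If $b$ were a zerodivisor on $R/aR$, then $b \in \fkp$ for some $\fkp \in \Ass_R(R/aR)$; since also $aR \subseteq \fkp$, the ideal $(a,b)$ is contained in $\fkp$, forcing $\height_R\fkp \ge \height_R(a,b) \ge 2$, and in particular $\dim R_\fkp \ge 2$. Localizing at $\fkp$: the element $a$ remains a non-zerodivisor in $R_\fkp$, the $(S_2)$ hypothesis gives $\depth R_\fkp \ge \min\{2, \dim R_\fkp\} = 2$, and hence $\depth(R_\fkp/aR_\fkp) = \depth R_\fkp - 1 \ge 1$. But $\fkp \in \Ass_R(R/aR)$ localizes to $\fkp R_\fkp \in \Ass_{R_\fkp}(R_\fkp/aR_\fkp)$, so $\depth(R_\fkp/aR_\fkp) = 0$, a contradiction. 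Thus the regularity condition holds, and Proposition \ref{1.2} yields $R = \widetilde{R}$.

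I do not expect any genuine obstacle here; the argument is essentially bookkeeping around the definition of $(S_2)$ and Proposition \ref{1.2}. The only points needing slight care are: recording that a non-zerodivisor of $R$ stays one in each localization $R_\fkp$ (so that the depth drops by exactly one modulo $a$), using $\height_R\fkp \ge 2$ to ensure $\dim R_\fkp \ge 2$ so that the $(S_2)$ inequality actually delivers depth $2$, and the compatibility of $\Ass$ with localization. As an alternative one could instead verify Condition $(2)$ of Proposition \ref{1.2}, noting that the implication $(S_2)\Rightarrow(S_1)$ makes every $\fkp \in \Ass R$ minimal and hence of height $0$; but invoking Condition $(1)$ is the shorter route.
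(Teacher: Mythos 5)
Your proposal is correct and follows exactly the route the paper intends: Corollary \ref{1.3} is stated as an immediate consequence of Proposition \ref{1.2} (with $M=R$, $V=\rmQ(R)$), and your verification that $(S_2)$ forces every pair $a,b\in\rmW(R)$ with $\height_R(a,b)\ge 2$ to be $R$-regular — by localizing at an associated prime of $R/aR$ containing $b$ and comparing $\depth(R_\fkp/aR_\fkp)=0$ with $\depth R_\fkp\ge 2$ — is precisely the standard argument being left to the reader. The bookkeeping points you flag (non-zerodivisors surviving localization, $\height_R\fkp\ge 2$ giving $\dim R_\fkp\ge 2$, compatibility of $\Ass$ with localization) are all handled correctly.
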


\begin{cor}\label{1.4}
With the same assumption as is in Proposition $\ref{1.2}$, the following assertions hold true.
\begin{enumerate}[$(1)$]
\item $\widetilde{\widetilde{M}}= \widetilde{M}$.
\item Let $M \subseteq L \subseteq V$ be an $R$-submodule of $V$. If every pair $a, b \in \rmW(R)$ with $\height_R(a,b) \ge 2$ is $L$-regular, then $\widetilde{M} \subseteq \widetilde{L}=L$.
\end{enumerate}
\end{cor}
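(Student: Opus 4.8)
The plan is to deduce both assertions from Proposition \ref{1.2}, together with Lemma \ref{1.1.1} and the monotonicity of the operation $\widetilde{(-)}$ recorded right after Definition \ref{1.1a} (namely $M \subseteq L \Rightarrow \widetilde{M} \subseteq \widetilde{L}$). The only point needing a little care is that Proposition \ref{1.2} will be applied not to $M$ but to the larger submodules $\widetilde{M}$ and $L$, so I first have to check that its standing hypothesis (Condition (1) or Condition (2)) is inherited when the module is enlarged.

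For assertion (2): I would begin by noting that the hypothesis of Proposition \ref{1.2} holds with $L$ in place of $M$. Condition (2) is a statement about $R$ alone, hence is unchanged; and if Condition (1) holds, then from $M \subseteq L \subseteq V$ we get $V = \rmQ(R)M \subseteq \rmQ(R)L \subseteq V$, so $\rmQ(R)L = V$ as well. By hypothesis every pair $a,b \in \rmW(R)$ with $\height_R(a,b) \ge 2$ is $L$-regular, so Proposition \ref{1.2} gives $L = \widetilde{L}$. Then $M \subseteq L$ yields $\widetilde{M} \subseteq \widetilde{L} = L$ by monotonicity, which is exactly the claim.

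For assertion (1): The inclusion $\widetilde{M} \subseteq \widetilde{\widetilde{M}}$ is immediate since $N \subseteq \widetilde{N}$ for every submodule $N$ of $V$. For the reverse inclusion I would invoke assertion (2) (equivalently, apply Proposition \ref{1.2} directly) with $L := \widetilde{M}$, noting that $M \subseteq \widetilde{M} \subseteq V$. This is legitimate: as in the previous paragraph, the hypothesis of Proposition \ref{1.2} is inherited by $\widetilde{M}$, and Lemma \ref{1.1.1} says precisely that every pair $a,b \in \rmW(R)$ with $\height_R(a,b) \ge 2$ is $\widetilde{M}$-regular. Hence $\widetilde{\widetilde{M}} = \widetilde{L} = L = \widetilde{M}$.

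The whole argument is essentially formal once Proposition \ref{1.2} and Lemma \ref{1.1.1} are available; the only genuine (and quite minor) obstacle is checking that the hypothesis of Proposition \ref{1.2} is stable under enlarging the module, which is where the inclusions $M \subseteq \widetilde{M} \subseteq V$ and $M \subseteq L \subseteq V$, and the fact that Condition (2) refers only to $R$, are used.
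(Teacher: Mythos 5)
Your proof is correct and is exactly the intended deduction: the paper states Corollary \ref{1.4} without proof, as an immediate consequence of Proposition \ref{1.2}, Lemma \ref{1.1.1}, and the monotonicity of $\widetilde{(-)}$, which is precisely the route you take. Your check that the standing hypothesis of Proposition \ref{1.2} is inherited by $\widetilde{M}$ and by $L$ is the one point that genuinely needs saying, and you handle it correctly.
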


In what follows, unless otherwise specified, let $R$ be a Noetherian ring. We assume that the following additional conditions are also satisfied.
\begin{enumerate}
\item $M$ is a finitely generated $R$-module.
\item $\rmQ(R)M=V$.
\item $(0):_{\rmQ(R)}V=(0)$.
\end{enumerate}
Hence, if $a \in \rmW(R)$ and $M/aM \ne (0)$, then $\height_R\fkp=1$ for every $\fkp \in \Min_RM/aM$
 since $(0):_RM=(0)$. Thanks to Condition (2), every $f \in V$ has an expression of the form $f = \frac{m}{a}$ with $a \in \rmW(R)$ and $m \in M$.

Let $a \in \rmW(R)$ and let $$aM = \bigcap_{\fkp \in \Ass_RM/aM}Q(\fkp)$$ be a primary decomposition of $aM$ in $M$. We set
$$\rmU(aM)=\begin{cases}
M, \ \ \ \ \ \ \ \ \ \ \ \ \ \ \ \ \ \ \  \ \ \ \ \text{if}\ \ aM=M.\\
\bigcap_{\fkp \in \Min_RM/aM}Q(\fkp), \ \ \text{if}\ \ aM \ne M.\\
\end{cases}
$$

\begin{thm}\label{1.5}
Let $a \in \rmW(R)$ and $m \in M$. Then $\frac{m}{a} \in \widetilde{M}$ if and only if $m \in \rmU(aM)$.
\end{thm}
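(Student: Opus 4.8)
The plan is to prove both implications by unwinding Definition \ref{1.1a} and comparing with the primary decomposition of $aM$. Fix $a \in \rmW(R)$ and $m \in M$, and write $x = \frac{m}{a} \in V$. Observe first that the "only if" direction is the easier one: suppose $x \in \widetilde{M}$, so there is some $I \in \operatorname{Ht}_{\ge 2}(R)$ with $Ix \subseteq M$, i.e. $Im \subseteq aM$. Now look at a minimal primary component $Q(\fkp)$ of $aM$, where $\fkp \in \Min_R M/aM$; we have already noted (using $(0):_R M = (0)$ together with Conditions (1)--(3)) that $\height_R\fkp = 1$. Since $Im \subseteq aM \subseteq Q(\fkp)$ and $Q(\fkp)$ is $\fkp$-primary, if $m \notin Q(\fkp)$ then $I \subseteq \fkp$, forcing $\height_R I \le 1$, a contradiction. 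Hence $m \in Q(\fkp)$ for every minimal prime $\fkp$ of $M/aM$, which is exactly the statement $m \in \rmU(aM)$ (the case $aM = M$ being trivial since then $\rmU(aM)=M \ni m$).

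For the "if" direction, assume $m \in \rmU(aM)$; we must produce an ideal of height $\ge 2$ multiplying $x$ into $M$, i.e. an ideal $I$ with $\height_R I \ge 2$ and $Im \subseteq aM$. The natural candidate is $I = aM :_R m = \{r \in R \mid rm \in aM\}$, and the task reduces to showing $\height_R (aM :_R m) \ge 2$. Write $aM = \bigcap_{\fkp \in \Ass_R M/aM} Q(\fkp)$; then $aM :_R m = \bigcap_{\fkp}\bigl(Q(\fkp) :_R m\bigr)$, and $Q(\fkp):_R m = R$ whenever $m \in Q(\fkp)$. Since $m \in \rmU(aM) = \bigcap_{\fkp \in \Min_R M/aM} Q(\fkp)$, the minimal primary components contribute nothing, and we are left with $aM :_R m = \bigcap_{\fkp \in \Ass_R M/aM \setminus \Min_R M/aM}\bigl(Q(\fkp):_R m\bigr)$. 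Each such embedded prime $\fkp$ satisfies $\height_R \fkp \ge 2$ (an embedded prime of $M/aM$ properly contains a minimal prime, which has height $1$), and $Q(\fkp):_R m$ is either $R$ or a $\fkp$-primary ideal, hence in either case has height $\ge 2$; a finite intersection of ideals each of height $\ge 2$ again has height $\ge 2$. If this intersection is empty (no embedded primes), then $aM :_R m = R$, i.e. $m \in aM$, and $x \in M \subseteq \widetilde{M}$ directly. Either way $x \in \widetilde{M}$.

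The one point requiring a little care — and the likely main obstacle — is the height bookkeeping for the \emph{embedded} primary components: one must check that $\operatorname{rad}(Q(\fkp):_R m) = \fkp$ when $Q(\fkp) :_R m \neq R$ (which holds because $Q(\fkp)$ is $\fkp$-primary and $m \notin Q(\fkp)$ is not automatic for embedded $\fkp$, so one uses that $Q(\fkp):_R m$ is again $\fkp$-primary or all of $R$), and that every embedded prime of $M/aM$ indeed has height $\ge 2$. The latter follows since every associated prime of $M/aM$ contains some minimal prime of $M/aM$, all minimal primes have height exactly $1$ (as recorded just before the theorem), and an embedded prime strictly contains a minimal one, hence has height $\ge 2$. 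A degenerate subtlety to handle at the outset is the case $aM = M$: then $\widetilde{M} \supseteq M$ and $\rmU(aM) = M$, so both sides hold for every $m \in M$; one should dispose of this case first so that in the main argument the primary decomposition of $aM$ is genuine and $\Min_R M/aM \neq \varnothing$.
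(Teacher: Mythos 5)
Your argument is correct and follows essentially the same route as the paper's: both directions come down to the facts that the minimal primes of $M/aM$ have height exactly one while the embedded primes have height at least two, so that only embedded components can obstruct $\frac{m}{a}\in\widetilde{M}$. The only (cosmetic) difference is in the \emph{if} direction, where the paper exhibits the height-two ideal $\fka=\prod_{\fkp\in\calF}\fkp^{\ell(\fkp)}$ with $\fka\,\rmU(aM)\subseteq aM$ (an ideal independent of $m$, which it then reuses for Corollary \ref{1.6}), whereas you take the colon $aM:_Rm$ itself and bound its height via the decomposition $aM:_Rm=\bigcap_{\fkp}\bigl(Q(\fkp):_Rm\bigr)$; both verifications are sound.
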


\begin{proof}
We may assume $aM \ne M$. Suppose $\frac{m}{a}\in \widetilde{M}$ and choose $I \in \operatorname{Ht}_{\ge 2}$ so that $I \subseteq aM :_Rm$. Let $\fkp \in \Min_RM/aM$. Then, since $\height_R\fkp = 1$, $aM_R:m \not\subseteq \fkp$, so that $m \in [aM]_\fkp \cap M=Q(\fkp)$. Hence, $m \in \rmU(aM)$.

Conversely, suppose $m \in \rmU(aM)$. If $aM = \rmU(aM)$, then $m \in aM$, so that $\frac{m}{a} \in M \subseteq \widetilde{M}$. Suppose $aM \ne \rmU(aM)$ and set $\calF= \Ass_RM/aM \setminus \Min_RM/aM$. Then, $\calF \ne \emptyset$ and for each $\fkp \in \calF$, there is an integer $\ell=\ell(\fkp) \gg 0$ such that $\fkp^\ell M \subseteq Q(\fkp)$, whence $\fkp^\ell (aM) \subseteq Q(\fkp)$. Therefore, setting $\fka = \prod_{\fkp \in \calF}\fkp^{\ell(\fkp)}$, we have $$\fka \rmU(aM) \subseteq \bigcap_{\fkp \in \calF}Q(\fkp) \cap \rmU(aM) =aM.$$ Hence, $\frac{\rmU(aM)}{a} \subseteq \widetilde{M}$, because $\fka \in \operatorname{Ht}_{\ge 2}(R)$.
\end{proof}

\begin{cor}\label{1.6}
If $\widetilde{M} \subseteq \frac{M}{a}$ for some $a \in \rmW(R)$, then $\widetilde{M} = \frac{\rmU(aM)}{a}$. Consequently $$\widetilde{M}= \bigcup_{a \in \rmW(R)}\frac{\rmU(aM)}{a}.$$
\end{cor}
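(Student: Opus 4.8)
The plan is to read off both statements directly from Theorem \ref{1.5}, which identifies which elements of $\widetilde{M}$ of the form $\frac{m}{a}$ actually lie in $\widetilde{M}$. For the first assertion I would argue as follows. Suppose $\widetilde{M} \subseteq \frac{M}{a}$ for some $a \in \rmW(R)$. Then every element $f \in \widetilde{M}$ can be written as $f = \frac{m}{a}$ with $m \in M$; by Theorem \ref{1.5} the condition $f \in \widetilde{M}$ is equivalent to $m \in \rmU(aM)$. This gives the inclusion $\widetilde{M} \subseteq \frac{\rmU(aM)}{a}$. For the reverse inclusion, again by Theorem \ref{1.5}, every $\frac{m}{a}$ with $m \in \rmU(aM)$ lies in $\widetilde{M}$, so $\frac{\rmU(aM)}{a} \subseteq \widetilde{M}$. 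Hence $\widetilde{M} = \frac{\rmU(aM)}{a}$.

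For the ``consequently'' part I would prove the two inclusions $\widetilde{M} \supseteq \bigcup_{a \in \rmW(R)}\frac{\rmU(aM)}{a}$ and $\widetilde{M} \subseteq \bigcup_{a \in \rmW(R)}\frac{\rmU(aM)}{a}$ separately. The inclusion ``$\supseteq$'' is immediate: for each $a \in \rmW(R)$ and each $m \in \rmU(aM)$, Theorem \ref{1.5} gives $\frac{m}{a} \in \widetilde{M}$, so $\frac{\rmU(aM)}{a} \subseteq \widetilde{M}$ for every such $a$, and taking the union preserves the inclusion. For ``$\subseteq$'', take any $f \in \widetilde{M}$. Using Condition (2) (that $\rmQ(R)M = V$), we may write $f = \frac{m}{a}$ with $a \in \rmW(R)$ and $m \in M$. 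Since $f \in \widetilde{M}$, Theorem \ref{1.5} forces $m \in \rmU(aM)$, so $f = \frac{m}{a} \in \frac{\rmU(aM)}{a} \subseteq \bigcup_{b \in \rmW(R)}\frac{\rmU(bM)}{b}$. This establishes equality.

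I do not anticipate a serious obstacle here: the statement is essentially a repackaging of Theorem \ref{1.5}. The one point that deserves a line of care is the very first reduction in the ``consequently'' part — namely that every $f \in V$, and in particular every $f \in \widetilde{M} \subseteq V$, admits an expression $f = \frac{m}{a}$ with $a \in \rmW(R)$ and $m \in M$. This is exactly the consequence of Condition (2) recorded just before Theorem \ref{1.5}, so it can be cited rather than re-proved. The case $aM = M$ (equivalently $\rmU(aM) = M$) is already folded into Theorem \ref{1.5}, so no separate treatment is needed.
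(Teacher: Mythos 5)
Your proposal is correct and follows essentially the same route as the paper: both statements are read off from the equivalence in Theorem \ref{1.5}, together with the observation (recorded just before that theorem, via Condition (2)) that every element of $V$ can be written as $\frac{m}{a}$ with $a \in \rmW(R)$ and $m \in M$. The only cosmetic difference is that the paper re-runs the forward-direction argument of Theorem \ref{1.5} inline for the second assertion instead of citing it, and leaves the first assertion and the reverse inclusion implicit, whereas you spell all of this out.
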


\begin{proof}
Let us check the second assertion. Let $f \in \widetilde{M}$ and write $f = \frac{m}{a}$ with $a \in \rmW(R)$ and $m \in M$. We take $I \in \operatorname{Ht}_{\ge 2}(R)$ so that $Im \subseteq aM$. If $aM=M$, then $f \in \frac{\rmU(aM)}{a}$. Suppose $aM \ne M$. We then have $I \not\subseteq \fkp$ for any $\fkp \in \Min_RM/aM$, so that $m \in [aM]_\fkp \cap M = Q(\fkp)$ for every $\fkp \in \Min_RM/aM$. Thus, $f=\frac{m}{a} \in \frac{\rmU(aM)}{a}$.
\end{proof}

\begin{cor}\label{1.7}
$\widetilde{M}$ is a finitely generated $R$-module if and only if $a\widetilde{M}=\rmU(aM)$ for some $a \in \rmW(R)$.
\end{cor}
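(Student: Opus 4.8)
The plan is to deduce everything from Corollary \ref{1.6}, which already identifies $\widetilde{M}$ as the union $\bigcup_{a \in \rmW(R)}\frac{\rmU(aM)}{a}$ and, more importantly, tells us that as soon as $\widetilde{M} \subseteq \frac{M}{a}$ for a single $a \in \rmW(R)$ we in fact have the equality $\widetilde{M} = \frac{\rmU(aM)}{a}$, i.e. $a\widetilde{M} = \rmU(aM)$. So the whole statement reduces to the observation that $\widetilde{M}$ is finitely generated precisely when its denominators can be cleared by one common element of $\rmW(R)$.

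For the ``if'' direction, suppose $a\widetilde{M} = \rmU(aM)$ for some $a \in \rmW(R)$. Then $\widetilde{M} = \frac{1}{a}\rmU(aM)$, and I would note that $\rmU(aM)$ is an $R$-submodule of the Noetherian $R$-module $M$, hence finitely generated, while multiplication by $a$ is injective on $V$ because $(0):_{\rmQ(R)}V = (0)$; thus $x \mapsto \frac{x}{a}$ is an $R$-isomorphism $\rmU(aM) \xrightarrow{\ \sim\ } \widetilde{M}$, and $\widetilde{M}$ is finitely generated.

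For the ``only if'' direction, assume $\widetilde{M}$ is finitely generated, say $\widetilde{M} = \sum_{i=1}^{n} R f_i$. Using $\rmQ(R)M = V$, write $f_i = \frac{m_i}{a_i}$ with $m_i \in M$ and $a_i \in \rmW(R)$, and set $a = a_1 a_2 \cdots a_n$, which again lies in $\rmW(R)$ since a product of non-zerodivisors is a non-zerodivisor. Then $a f_i = (a_1\cdots \widehat{a_i} \cdots a_n)\, m_i \in M$ for every $i$, so $a\widetilde{M} \subseteq M$, i.e. $\widetilde{M} \subseteq \frac{M}{a}$. By the first assertion of Corollary \ref{1.6} this forces $\widetilde{M} = \frac{\rmU(aM)}{a}$, that is, $a\widetilde{M} = \rmU(aM)$, as desired.

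I do not expect a genuine obstacle here: the content is entirely carried by Corollary \ref{1.6}, and the only thing to be slightly careful about is the bookkeeping in the ``only if'' part — checking that a common denominator can be chosen inside $\rmW(R)$ (hence that the hypothesis $\widetilde{M}\subseteq\frac{M}{a}$ of Corollary \ref{1.6} is literally met) and that in the ``if'' part the scaling map is an honest $R$-isomorphism, which uses the standing hypothesis $(0):_{\rmQ(R)}V=(0)$.
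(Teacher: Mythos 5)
Your proof is correct and follows the route the paper evidently intends (Corollary \ref{1.7} is stated there without proof as an immediate consequence of Corollary \ref{1.6}): for ``only if'' you clear the denominators of a finite generating set to get $\widetilde{M}\subseteq \frac{M}{a}$ for a single $a\in\rmW(R)$ and invoke the first assertion of Corollary \ref{1.6}, and for ``if'' you observe that $\rmU(aM)\subseteq M$ is finitely generated over the Noetherian ring $R$ and is carried onto $\widetilde{M}$ by $x\mapsto a^{-1}x$. The one cosmetic quibble is that injectivity (indeed bijectivity) of multiplication by $a$ on $V$ follows from $a$ being a unit of $\rmQ(R)$ rather than from $(0):_{\rmQ(R)}V=(0)$, and in any case surjectivity of the scaling map already suffices to conclude finite generation.
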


\subsection{Trace ideals}
Let $R$ be an arbitrary commutative ring and let $M, X$ be $R$-modules. Let $$\tau: \Hom_R(M,X) \otimes_RM \to X$$ denote the homomorphism defined by $\tau(f \otimes m)=f(m)$ for each $f \in \Hom_R(M,X)$ and $m \in M$. We set $\operatorname{Tr}_X(M)= \Im \tau$ and call it the trace module of $M$ in $X$. In fact, $\operatorname{Tr}_X(M)$ is an $R$-submodule of $X$, and a given $R$-submodule $Y$ of $X$ is called a {\em trace~submodule} in $X$, if $Y = \operatorname{Tr}_X(M)$ for some $R$-module $M$.

The following result is due to H. Lindo \cite{Lindo}.

\begin{prop}[{\cite[Lemma 2.3]{Lindo}}]\label{1.10}
Let $I$ be an ideal of $R$. Then the following conditions are equivalent.
\begin{enumerate}[$(1)$]
\item $I$ is a trace ideal in $R$, that is $I = \Tr_R(M)$ for some $R$-module $M$.
\item $I = \Tr_R(I)$.
\item For each homomorphism $f : I \to R$ of $R$-modules, there is an endomorphism $g : I \to I$ such that $f = \iota{\cdot}g$, where $\iota : I \to R$ denotes the embedding.
\end{enumerate}
When $I$ contains a non-zerodivisor, one can add the following, where the colons are considered inside the total ring $\rmQ(R)$ of fractions of $R$.

\begin{enumerate}[$(1)$]
\item[$(4)$] $I:I = R:I$. 
\end{enumerate}
\end{prop}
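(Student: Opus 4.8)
The plan is to prove the cycle $(1)\Rightarrow(3)\Rightarrow(2)\Rightarrow(1)$ among the first three conditions, and then to establish $(2)\Leftrightarrow(4)$ separately under the extra hypothesis. The implication $(2)\Rightarrow(1)$ is trivial (take $M=I$). For $(1)\Rightarrow(3)$, suppose $I=\Tr_R(M)$ and let $f\colon I\to R$. By the very definition of the trace module, every $\varphi\in\Hom_R(M,R)$ satisfies $\varphi(M)\subseteq\Tr_R(M)=I$, so $\varphi$ factors as $M\xrightarrow{\bar\varphi}I\hookrightarrow R$; hence $f\circ\bar\varphi\in\Hom_R(M,R)$ and $(f\circ\bar\varphi)(M)\subseteq\Tr_R(M)=I$. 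Since $I=\sum_{\varphi}\varphi(M)$, every element of $I$ is a finite sum of elements $\varphi(m)=\bar\varphi(m)$, and applying the $R$-linear map $f$ yields $f(I)\subseteq I$; thus $f$ factors through the embedding $\iota$, giving $g\colon I\to I$ with $f=\iota\cdot g$. For $(3)\Rightarrow(2)$: the embedding $\iota\colon I\to R$ itself shows $I\subseteq\Tr_R(I)$ always, while $(3)$ says $f(I)=g(I)\subseteq I$ for every $f\colon I\to R$, i.e.\ $\Tr_R(I)\subseteq I$; hence $I=\Tr_R(I)$.

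For the last part, assume $I$ contains a non-zerodivisor $a$. The crucial step is the standard identification of $\Hom_R(I,R)$ with $R:I$ and of $\End_R I$ with $I:I$, all viewed inside $\rmQ(R)$: a fraction $x$ with $xI\subseteq R$ (resp.\ $xI\subseteq I$) gives multiplication $\mu_x\in\Hom_R(I,R)$ (resp.\ $\mu_x\in\End_R I$), and conversely any $f\colon I\to R$ is multiplication by $x:=f(a)/a\in\rmQ(R)$, because $af(b)=f(ab)=bf(a)$ forces $f(b)=bx$ for all $b\in I$; this $x$ lies in $R:I$, and in $I:I$ whenever $f(I)\subseteq I$. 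Consequently
\[
\Tr_R(I)=\sum_{f\in\Hom_R(I,R)}f(I)=\sum_{x\in R:I}xI=(R:I)\,I .
\]

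Finally I would deduce $(2)\Leftrightarrow(4)$ from the displayed formula. Condition $(2)$ now reads $I=(R:I)I$. If $I:I=R:I$, then $(R:I)I=(I:I)I\subseteq I$, and since $1\in R:I$ we also have $I=1\cdot I\subseteq(R:I)I$, so $(2)$ holds. Conversely, if $I=(R:I)I$, then for any $x\in R:I$ we get $xI\subseteq(R:I)I=I$, so $x\in I:I$; combined with the trivial inclusion $I:I\subseteq R:I$ (valid because $I\subseteq R$), this gives $(4)$. I expect the only genuinely delicate point to be the hom-module identification in the middle paragraph, where the non-zerodivisor $a$ is essential: without it one cannot recover $f$ from the single value $f(a)$, nor realize $\Hom_R(I,R)$ and $\End_R I$ as subsets of $\rmQ(R)$. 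Everything else is routine bookkeeping with trace ideals.
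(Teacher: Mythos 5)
Your proof is correct. Note that the paper itself gives no proof of this proposition --- it is imported verbatim from Lindo's Lemma 2.3 with a citation --- so there is no in-paper argument to compare against; your cycle $(1)\Rightarrow(3)\Rightarrow(2)\Rightarrow(1)$, followed by the identification $\Hom_R(I,R)\cong R:I$ (and $\End_RI\cong I:I$) inside $\rmQ(R)$ using the non-zerodivisor $a$ to get $\Tr_R(I)=(R:I)I$ and hence $(2)\Leftrightarrow(4)$, is exactly the standard argument and every step checks out.
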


\section{When are the rings $I:I$ Gorenstein?}\label{main}

We are now in a position to discuss the question of when the $(S_2)$-ifications are Gorenstein rings. Let us begin with the following.

\begin{lem}\label{lemma2}
Let $A$ be a Noetherian ring. Let $A \subseteq B \subseteq \rmQ(A)$ be a subring of $\rmQ(A)$ and assume that $B$ is a finitely generated $A$-module. Let $I$ be an ideal of $B$ such that $I \subseteq A$ and $\height_AI \ge 2$. If  $A$ is locally quasi-unmixed and $B$ satisfies $(S_2)$, then $I$ is a trace ideal in $A$ and $B=I:I$. 
\end{lem}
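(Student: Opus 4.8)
The plan is to exploit the characterization of trace ideals in Proposition~\ref{1.10}: I want to show $I = A:_A(I:_AI)$, and then since $I \subseteq A \subseteq B$ with $I$ an ideal of $B$, conclude $B = I:I$. Since $I$ contains a non-zerodivisor of $A$ (as $\height_AI \ge 2$ forces this once we note $A$ is locally quasi-unmixed, hence has no embedded primes — more precisely, every associated prime of $A$ has height $0$, so no non-zerodivisor condition is violated; in any case $I \not\subseteq \bigcup_{\fkp \in \Ass A}\fkp$), by part $(4)$ of Proposition~\ref{1.10} it suffices to prove $I:I = A:I$, where the colons are formed in $\rmQ(A)$. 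The inclusion $A:I \subseteq I:I$ is automatic once we know $I(A:I) \subseteq I$, which will follow from $I$ being an ideal of $B$ together with $A:I \subseteq B$. So the two things to establish are: (a) $A:I \subseteq B$, and then (b) $I:I \subseteq A:I$, i.e. every $f \in \rmQ(A)$ with $fI \subseteq I$ already satisfies $fI \subseteq A$; combined with $B \subseteq I:I$ (clear, since $I$ is a $B$-ideal) this gives $B = I:I = A:I$ and trace-ness.

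First I would handle $B \subseteq I:I$: $B$ acts on $I$ because $I$ is an ideal of $B$, and $B$ consists of elements of $\rmQ(A)$, so $B \subseteq I:_{\rmQ(A)}I$. For the reverse, the key point is that $I:I$ is an $A$-subalgebra of $\rmQ(A)$ that is module-finite over $A$ (since $I$ contains a non-zerodivisor, $I:I \cong \End_A I$ is a finitely generated $A$-module), so $I:I \subseteq \widetilde{A} \cap (\text{integral closure-type bound})$. The clean route is: $B$ satisfies $(S_2)$, so by Corollary~\ref{1.3}, $B = \widetilde{B}$, and since $A \subseteq B \subseteq \rmQ(A)$ with $B$ module-finite over $A$, we get $\widetilde{A} \subseteq \widetilde{B} = B$. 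Now I claim $I:I \subseteq \widetilde{A}$: if $f I \subseteq I$ then, picking a non-zerodivisor $a \in I$, we have $fa \in I \subseteq A$, so $f \in \frac{1}{a}A$, i.e. $f = m/a$ with $m \in A$; and $I f \subseteq I \subseteq A$ shows $I \subseteq A:_A(af) \cdot$(something) — more carefully, $a^{-1}(aI f) \subseteq a^{-1} I$, hmm. Let me instead argue directly: $[A:_A f] \supseteq$ ? The cleanest: for $f \in I:I$ and any $x \in I$, $xf \in I \subseteq A$, so $I \subseteq A:_A f$; since $\height_A I \ge 2$ this gives $\height_A[A:_Af] \ge 2$, i.e. $f \in \widetilde{A}$ by the displayed description of $\widetilde{A}$ in Section~2. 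Hence $I:I \subseteq \widetilde{A} \subseteq B$, giving $B = I:I$.

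It remains to see $I$ is a trace ideal, equivalently (Proposition~\ref{1.10}(4)) that $I:I = A:I$. We have $A:I \subseteq I:I$ trivially (if $fI \subseteq A$ then $fI^2 \subseteq IA = I$ when $I$ is... no), so I need $I(A:I) \subseteq I$: but $A:I \subseteq I:I = B$, and $I$ is an ideal of $B$, so $I(A:I) \subseteq IB \subseteq I$ — good. For $I:I \subseteq A:I$: take $f \in I:I = B$; then $fI \subseteq I \subseteq A$, so $f \in A:I$. Thus $I:I = A:I$, and by Proposition~\ref{1.10} (applicable since $I$ contains a non-zerodivisor of $A$) $I$ is a trace ideal of $A$.

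The main obstacle I anticipate is verifying carefully that $\widetilde{A} \subseteq B$, i.e. that the module-finite overring $B$ satisfying $(S_2)$ actually contains the $(S_2)$-ification of $A$; this needs the local quasi-unmixedness of $A$ to ensure $\rmQ(A)A = \rmQ(A)$ and that the hypotheses of Proposition~\ref{1.2}/Corollary~\ref{1.3}–\ref{1.4} apply to $B$ as a finitely generated $A$-module inside $V = \rmQ(A) = \rmQ(B)$, so that $\widetilde{A}$ computed in $\rmQ(A)$ lands in $\widetilde{B} = B$. One must check that $\Ass B$ consists of primes of height $\le 1$ (equivalently height $0$, using quasi-unmixedness of $A$ and that $B$ is an overring in the total quotient ring), so that Corollary~\ref{1.3} genuinely yields $\widetilde{B}=B$; this is where the hypothesis that $A$ is locally quasi-unmixed does its real work, and I would spell it out by passing to localizations at primes of $A$ of height $1$ and invoking that a quasi-unmixed local ring of dimension $\le 1$ has no embedded primes.
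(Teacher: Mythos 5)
Your proposal is correct in substance but follows a genuinely different, and considerably longer, path than the paper. The paper's proof is three lines: since $A$ is locally quasi-unmixed, Ratliff's theorem gives $\height_BP=\height_A(P\cap A)$ for all $P\in\Spec B$, hence $\height_BI\ge 2$; the condition $(S_2)$ on $B$ then yields $\grade_BI\ge 2$, which at once forces $B:I=B$ and so $B=I:I$; finally $A:I\subseteq B:I=B=I:I\subseteq A:I$ and Proposition \ref{1.10}(4) give the trace property. You instead route everything through the $(S_2)$-ification formalism of Section 2: $I:I\subseteq\widetilde{A}$ because $I\subseteq A:_Af$ for each $f\in I:I$, and $\widetilde{A}\subseteq\widetilde{B}=B$ via Corollaries \ref{1.3} and \ref{1.4}. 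That works, and it has the bonus of exhibiting $B=\widetilde{A}$ directly (the paper defers this observation to Corollary \ref{prop2}); but the hypothesis of Corollary \ref{1.4}(2) --- that every pair $a,b\in\rmW(A)$ with $\height_A(a,b)\ge 2$ is $B$-regular --- is verified by exactly the Ratliff-plus-$(S_2)$ computation above, so you have relocated the paper's key step rather than avoided it, and the check you sketch in your last paragraph (associated primes of $B$, localizing at height-one primes of $A$) is not quite the right one. Three local repairs are needed. (i) Your justification that $I$ contains a non-zerodivisor is wrong as stated: locally quasi-unmixed does \emph{not} imply the absence of embedded primes (for instance $k[[x,y,z]]/(xz,yz,z^2)$ is quasi-unmixed with an embedded prime of height $2$); the correct reason here is that $\grade_BI\ge 2>0$, so $I$ contains a $B$-regular, hence $A$-regular, element. (ii) In the final paragraph you deduce $I(A:I)\subseteq I$ from ``$A:I\subseteq I:I=B$,'' which is circular; instead note that $A:I\subseteq\widetilde{A}=B$ by the very same colon-height argument you used for $I:I$ (for $f\in A:I$ one still has $I\subseteq A:_Af$), and only then conclude $I(A:I)\subseteq IB\subseteq I$. (iii) The $B$-regularity hypothesis of Corollary \ref{1.4}(2) must actually be spelled out using Ratliff's height transfer and $(S_2)$. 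With these fixed, your argument is complete.
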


\begin{proof}
Since $A$ is locally quasi-unmixed, we have $\height_BP=\height_A(P \cap A)$ for every $P \in \Spec B$ (\cite[THEOREM 3.8]{Ratliff}), so that $\height_BI \ge 2$, whence $\grade_BI \ge 2$ because $B$ satisfies $(S_2)$. Therefore, $B = I:I$, so that 
$$A:I \subseteq B:I=B =I:I \subseteq A:I,$$ which implies, by Proposition \ref{1.10}, that $I$ is a trace ideal in $A$.
\end{proof}

\begin{ex}
Let $k$ be a field and $S=k[[X,Y,Z,W]]$ be the formal power series ring over $k$. We consider the ring $A = S/[(X,Y) \cap (Z,W)]$ and let $\m$ denote the maximal ideal of $A$. Then, $\m = A:B$, where $B =S/(X,Y) \times S/(Z,W)$, and Lemma \ref{lemma2} tells us that for every $\ell \ge 1$, $\m^\ell$ is a trace ideal in $A$ and $B = \m^\ell : \m^\ell$. 
\end{ex}

\begin{prop}\label{prop1.1}
Let $A$ be a Noetherian local ring and $I~(\ne A)$ an ideal of $A$ with $\height_AI \ge 2$. Assume that there exists an exact sequence 
$$0 \to A \to \rmK_A \to C \to 0$$ of $A$-modules such that $IC=(0)$.
Then the following assertions hold true, where $R=\widetilde{A}$.
\begin{enumerate}[$(1)$]
\item $R \cong \rmK_A$ as an $A$-module.
\item $\Hom_A(R,\rmK_A) \cong R$ as an $R$-module. 
\item If $\rmK_A$ is a Cohen-Macaulay $A$-module, then $R$ is a Gorenstein ring.
\item If $I$ is a trace ideal in $A$, then $R=I:I$.
\end{enumerate}
\end{prop}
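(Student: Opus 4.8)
The plan is to extract everything from the given exact sequence $0 \to A \to \rmK_A \to C \to 0$ with $IC = (0)$, together with the identification of $\widetilde A$ via trace modules and unmixed components developed in Section 2. First I would observe that the canonical module $\rmK_A$ is a module satisfying $(S_2)$ (at least after identifying it with a submodule of $\rmQ(A)$ via a non-zerodivisor, using $\height_A I \ge 2$ to see that $\rmQ(A)\otimes_A \rmK_A \cong \rmQ(A)$; the hypothesis that $I$ contains a non-zerodivisor is inherited from $\height_A I \ge 2$). Pushing the sequence forward, $I\rmK_A \subseteq A$ since $I$ annihilates $C$, so I can view $\rmK_A$ as an $A$-submodule $J$ of $\rmQ(A)$ with $A \subseteq J$ and $IJ \subseteq A$. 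Because $J \cong \rmK_A$ satisfies $(S_2)$, Corollary \ref{1.4}(2) applies: every pair $a,b \in \rmW(A)$ with $\height_A(a,b)\ge 2$ is $J$-regular, hence $\widetilde J = J$ and $\widetilde A \subseteq \widetilde J = J$. For the reverse inclusion I would use that $IJ \subseteq A$ with $I \in \operatorname{Ht}_{\ge 2}(A)$ forces $J \subseteq \widetilde A$ directly from Definition \ref{1.1a}. This proves (1): $R = \widetilde A \cong \rmK_A$ as $A$-modules.

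For (2), once $R \cong \rmK_A$, the module $\Hom_A(R,\rmK_A) = \Hom_A(\rmK_A,\rmK_A)$ is the endomorphism ring of the canonical module. I would invoke the standard fact that for $\rmK_A$ a module with $(S_2)$ over a local ring admitting a canonical module, $\Hom_A(\rmK_A,\rmK_A) \cong \widetilde A$ as rings (this is exactly the statement that the $(S_2)$-ification is computed by $\rmK_A\!:\!\rmK_A$; alternatively, via Lemma \ref{lemma2} applied with $B = \widetilde A = J$ and the ideal $I$, since $A$ is then locally quasi-unmixed and $J$ satisfies $(S_2)$, giving $B = I:I$ and then unwinding $\Hom_A(J,\rmK_A)$ using $\rmK_A \cong J$). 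Concretely, $\Hom_A(J,\rmK_A) \cong \Hom_A(J,J) = J:J$ inside $\rmQ(A)$, and $J:J = \widetilde A = R$ because $J$ satisfies $(S_2)$ and $\height_B P = \height_A(P\cap A)$ for $P \in \Spec B$ by Ratliff. So $\Hom_A(R,\rmK_A)\cong R$ as an $R$-module.

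For (3), if $\rmK_A$ is Cohen-Macaulay then so is $R$ (as an $A$-module, hence as a ring, since $\dim R = \dim A$ and $R$ is module-finite over $A$). Then $R$ is a Cohen-Macaulay local (or semilocal) ring with canonical module $\rmK_R = \Hom_A(R,\rmK_A) \cong R$ by part (2); a Cohen-Macaulay ring whose canonical module is free of rank one is Gorenstein, so $R$ is Gorenstein. For (4), if $I$ is a trace ideal in $A$ then by Proposition \ref{1.10}(4) we have $I:I = A:I$ (colons in $\rmQ(A)$); I would then show $A:I = J = R$. Indeed $J \subseteq A:I$ since $IJ \subseteq A$, and conversely if $x \in A:I$ then $Ix \subseteq A \subseteq J$ with $I \in \operatorname{Ht}_{\ge 2}(A)$, so $x \in \widetilde J = J$; hence $A:I = J = R$, i.e. $R = I:I$.

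The main obstacle I anticipate is the clean justification that $\rmK_A$, presented abstractly, may be realized as a fractional ideal $J$ with $A \subseteq J \subseteq \rmQ(A)$ so that the sequence $0\to A\to \rmK_A\to C\to 0$ becomes the inclusion $A \hookrightarrow J$ — i.e. that the map $A \to \rmK_A$ in the hypothesis really is (up to the identification) the structural embedding, not just some injection. This hinges on $\rmK_A$ having a rank and on $C$ being torsion (forced by $IC = 0$ with $I$ containing a non-zerodivisor), and on the fact that a torsion-free rank-one module over a ring with total quotient ring $\rmQ(A)$ embeds in $\rmQ(A)$; here one must be slightly careful because $A$ need not be a domain, so "rank one" and "torsion-free" have to be handled via $\rmW(A)$ and localization at minimal primes. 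Once that realization is in place, everything else is a formal consequence of the Section 2 machinery (Corollary \ref{1.4}, Lemma \ref{lemma2}) plus the elementary theory of canonical modules.
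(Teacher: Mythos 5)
Your overall strategy---realize $\rmK_A$ as a fractional ideal $J$ with $A\subseteq J\subseteq \rmQ(A)$, prove $\widetilde A=J$ by combining $IJ\subseteq A$ with the $(S_2)$-property of the canonical module, and then read off (2)--(4)---is sound, and your treatment of (2), (3), (4) matches the paper's closely (the paper likewise uses $K:R=K$, \cite[Korollar 5.14]{HK}, and Proposition \ref{1.10} exactly where you do). But the step you yourself flag as ``the main obstacle'' is a genuine gap as written, and it is the crux of assertion (1): you must show that the \emph{given} map $\varphi:A\to\rmK_A$ becomes, under a suitable identification $\rmK_A\cong J\subseteq\rmQ(A)$, the actual inclusion $A\hookrightarrow J$. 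Knowing merely that $\rmK_A$ embeds somehow into $\rmQ(A)$ is not enough, since a priori $\varphi$ could differ from the structural embedding by a non-invertible endomorphism. The missing point is that $\varphi(1)$ is a \emph{unit} of $\rmQ(A)$, and this is exactly where $IC=(0)$ and $\height_AI\ge 2$ enter: for $\fkp\in\Ass A$ one has $\fkp\in\Ass_A\rmK_A=\Assh A$, hence $I\not\subseteq\fkp$, hence $C_\fkp=(0)$ and $\varphi_\fkp:A_\fkp\to(\rmK_A)_\fkp$ is an isomorphism onto a free rank-one module; so $\varphi(1)$ is invertible in $\rmQ(A)$ and you may rescale. (The same localization is also what shows $\rmQ(A)$ is Gorenstein and that $I$ contains a non-zerodivisor; note that $\height_AI\ge2$ alone does \emph{not} produce a non-zerodivisor in a ring with embedded primes --- you need $\Ass A\subseteq\Assh A$, which comes from the embedding $A\hookrightarrow\rmK_A$.) A second, smaller gap: to apply Corollary \ref{1.4}(2) you must convert ``$\rmK_A$ satisfies $(S_2)$'' into ``every pair $a,b\in\rmW(A)$ with $\height_A(a,b)\ge2$ is $J$-regular,'' which uses $\Ass A=\Assh A$ and $\Supp_A\rmK_A=\Spec A$; true here, but not addressed.

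It is worth seeing how the paper sidesteps your obstacle entirely: instead of normalizing the embedding, it applies $\Hom_A(-,K)$ to $0\to A\xrightarrow{\varphi}K\to C\to 0$. Since $\dim_AC\le d-2$, both $\Hom_A(C,K)$ and $\Ext^1_A(C,K)$ vanish, so $\varphi^*:K:K\to K$ is an isomorphism sending $1$ to $\varphi(1)$; combined with $\widetilde A=K:K$ (Aoyama--Goto \cite[Theorem 1.6]{AG}) this yields (1) at once, together with $IR\subseteq A$, which both routes need for (4). Your route invokes Aoyama's $(S_2)$-theorem for $\rmK_A$ where the paper invokes Aoyama--Goto's $\widetilde A=\rmK_A:\rmK_A$; these are external inputs of comparable weight, so once you supply the normalization of $\varphi$ described above, your argument does go through as a legitimate alternative.
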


\begin{proof}
(1), (2) Let $\fkp \in \Ass A$. Then $I \not\subseteq \fkp$ because $\fkp \in \Ass_A\rmK_A = \Assh A$, so that $C_\fkp=(0)$ and 
$$
A_\fkp \cong (\rmK_A)_\fkp =\rmK_{A_\fkp}
$$
(\cite[Theorem 4.2]{A}, see also \cite[Satz 5.22]{HK}). Hence, $A_\fkp$ is a Gorenstein ring for every $\fkp \in \Ass A$, that is $\rmQ(A)$ is a Gorenstein ring. We choose an $A$-submodule $K$ of $\rmQ(A)$ so that $K \cong \rmK_A$ as an $A$-module. Hence, $\rmQ(A)K=\rmQ(A)$, because $\rmQ(A)$ is self-injective. Then, since $I$ contains a subsystem of parameters of $A$ of length $2$, we get, taking the $K$-dual of the sequence $0 \to A \overset{\varphi}{\rightarrow} K \to C \to 0$, the natural isomorphism 
$$
\psi : K:K = \Hom_A(K,K) \overset{\varphi^*}{\rightarrow} \Hom_A(A,K) = K
$$
of $A$-modules, where $\psi(1)= \varphi(1)$. Hence, $R \cong \rmK_A$ as an $A$-module since $R= K:K$ by \cite[Theorem 1.6]{AG}, which shows Assertion (1). On the other hand, since $\psi(1)= \varphi(1)$, $(K:K)/A \cong C$ as an $A$-module, so that $$IR=I(K:K) \subseteq A.$$ We now notice that $R \cong K$ as an $R$-module, because $R \cong K$ as an $A$-module and $K$ is an $R$-submodule of $\rmQ(A)$ such that $\rmQ(A)K=\rmQ(A)$. Let $K = \alpha R$ for some $\alpha$ of $\rmQ(A)$. Then, $\alpha$ is invertible in $\rmQ(A)$, so that 
$$ 
K:R= \alpha R : R = \alpha[R:R]=\alpha R=K,$$
which shows Assertion (2).

(3) Since $\depth_AR=\dim A$ by Assertion (1), every system of parameters of $A$ forms a regular sequence for $R$, whence $\height_RM= \dim A$ for every $M \in \Max R$, so that $R_M$ is a Gorenstein ring by Assertion (2) (\cite[Korollar 5.14]{HK}).

(4) Suppose that $I$ is a trace ideal in $A$ and set $B = I:I$. Then, since $IR \subseteq A$, we have $R \subseteq A : I = B$ where the equality follows from Proposition \ref{1.10}, while $B \subseteq \widetilde{A} = R$ by Definition \ref{1.1a}, since $IB=I \subseteq A$ and $\height_AI \ge 2$. Consequently, $R=B$, whence $\widetilde{A}=I:I$. 
\end{proof}

\begin{cor}\label{prop2}
Let $A$ be a Noetherian local ring with $\dim A=d$ and $t = \depth A$. Let $A \subseteq B \subseteq \rmQ(A)$ be a subring of $\rmQ(A)$ such that $B$ is a finitely generated $A$-module. We set $\fka = A:B$ and assume the following three conditions are satisfied.
\begin{enumerate}
\item[$(\rm{1})$] $A$ is a quasi-unmixed ring.
\item[$(\rm{2})$] $\height_A \fka \ge 2$.
\item[$(\rm{3})$] $B$ is a Gorenstein ring.
\end{enumerate} 
Then the following assertions hold true.
\begin{enumerate}[$(\rm{a})$]
\item[$(\rm{a})$] $B=\widetilde{A}$, $\depth_AB=d$, $\fka$ is a trace ideal in $A$, and $B = \fka : \fka$.
\item[$(\rm{b})$] $\Ass A = \Assh A$.
\item[$(\rm{c})$] $A$ possesses the canonical module $\rmK_A$, and $\rmK_A \cong B$ as an $A$-module.
\item[$(\rm{d})$] $A=B$ if and only if $t=d$.
\end{enumerate} 
\end{cor}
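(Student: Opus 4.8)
My plan is to read off (a) from Lemma \ref{lemma2} and Corollary \ref{1.4}, obtain (b) by a localization-plus-depth-lemma argument, deduce (c) by transporting the canonical module of $B$ down to $A$ via local duality, and get (d) as a short consequence; Ratliff's height theorem for quasi-unmixed rings is the technical engine throughout. For (a): note first that $\fka=A:B$ is an ideal of $B$ as well, since $\fka B\cdot B=\fka B\subseteq\fka$. As $A$ is quasi-unmixed it is locally quasi-unmixed, and a Gorenstein ring satisfies $(S_2)$, so Lemma \ref{lemma2}, applied to the ideal $\fka$ of $B$ (which lies in $A$ and has $\height_A\fka\ge 2$), shows at once that $\fka$ is a trace ideal in $A$ and $B=\fka:\fka$. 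The inclusion $B\subseteq\widetilde{A}$ is immediate from Definition \ref{1.1a}, since $\fka f\subseteq\fka B\subseteq A$ for each $f\in B$ and $\fka\in\operatorname{Ht}_{\ge 2}(A)$. For $\widetilde{A}\subseteq B$ I would apply Corollary \ref{1.4}(2) with $M=A$ and $L=B$: any $a,b\in\rmW(A)$ with $\height_A(a,b)\ge 2$ lie in $\rmW(B)$ (being units of $\rmQ(A)\supseteq B$), and Ratliff's theorem gives $\height_B(a,b)B\ge 2$, so $a,b$ is a $B$-regular sequence because $B$ is $(S_2)$. Finally $\m B$ and $\rad B$ have the same radical, so $\depth_AB=\min_{P\in\Max B}\depth_{B_P}B_P$, and each $B_P$ is Gorenstein local with $\dim B_P=\height_BP=\height_A\m=d$; hence $\depth_AB=d$.

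For (b), the inclusion $\Assh A\subseteq\Ass A$ always holds. Conversely, let $\fkp\in\Ass A$ and put $C=B/A$, so $\fka C=(0)$. If $\fka\subseteq\fkp$ then $\height_A\fkp\ge 2$, and localizing the exact sequence $0\to A\to B\to C\to 0$ at $\fkp$ makes $B_\fkp$ a maximal Cohen-Macaulay $A_\fkp$-module (it is a localization of the Cohen-Macaulay ring $B$, of dimension $\height_A\fkp$ over $A_\fkp$), so the depth lemma forces $\depth A_\fkp\ge 1$, contradicting $\fkp\in\Ass A$. Thus $\fka\not\subseteq\fkp$, whence $C_\fkp=(0)$ and $A_\fkp\cong B_\fkp$ is Cohen-Macaulay, so $\fkp A_\fkp\in\Min A_\fkp$, i.e. $\fkp\in\Min A$; since a quasi-unmixed ring is equidimensional, $\dim A/\fkp=d$, i.e. $\fkp\in\Assh A$. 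In particular $A$ is unmixed, so $\fka$ contains a non-zerodivisor.

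For (c), a quasi-unmixed ring is equidimensional and universally catenary, so $\dim_AC\le\dim A/\fka=d-\height_A\fka\le d-2$, giving $\H^{d-1}_\m(C)=\H^d_\m(C)=(0)$; the cohomology sequence of $0\to A\to B\to C\to 0$ then yields $\H^d_\m(A)\cong\H^d_\m(B)=\H^d_{\rad B}(B)$. Dualizing into $\E=\E_A(A/\m)$ and using the standard isomorphism $\Hom_A(B,\E)\cong\E_B(B/\rad B)$ for the module-finite algebra $B$, I get
$$\Hom_A\big(\H^d_\m(A),\E\big)\cong\Hom_B\big(\H^d_{\rad B}(B),\E_B(B/\rad B)\big)\cong\rmK_B,$$
the canonical module of the Gorenstein semilocal ring $B$. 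Since $\rmK_B$ is a finite $A$-module, $A$ possesses a canonical module and $\rmK_A\cong\rmK_B$; and because $B$ is Gorenstein, $\rmK_B$ is an invertible $B$-module, hence free since $B$ is semilocal, so $\rmK_A\cong\rmK_B\cong B$ as $A$-modules.

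For (d): if $A=B$ then $A$ is Gorenstein, hence Cohen-Macaulay, so $t=d$; conversely if $t=d$ then $A$ is Cohen-Macaulay, hence satisfies $(S_2)$, so $A=\widetilde{A}=B$ by Corollary \ref{1.3} and part (a). I expect the delicate points to be the reverse inclusion $\widetilde{A}\subseteq B$ in (a) --- whose only non-formal ingredient is Ratliff's height formula for quasi-unmixed rings --- and the passage in (c): one must manufacture the canonical module of the possibly non-Cohen-Macaulay ring $A$ out of $B$ via local duality for the finite algebra $B$, and then contend with $B$ being merely semilocal, the saving fact there being that an invertible module over a semilocal ring is free.
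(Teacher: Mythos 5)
Your proposal is correct, and its skeleton coincides with the paper's proof: part (a) via Lemma \ref{lemma2} together with the two inclusions $B\subseteq\widetilde{A}$ (from Definition \ref{1.1a}) and $\widetilde{A}\subseteq\widetilde{B}=B$ (from Corollary \ref{1.4}(2), using Ratliff's height formula and the $(S_2)$ property of $B$), and part (c) via $\dim_AC\le d-2$, the resulting isomorphism $\H^d_\m(A)\cong\H^d_\m(B)$, and Matlis duality giving $\rmK_{\widehat{A}}\cong\widehat{A}\otimes_AB$. You diverge from the paper in two smaller places. For (b) the paper simply reads off $\Ass A\subseteq\Ass_AB=\Ass_A\rmK_A=\Assh A$ once (c) is in hand (using Aoyama's identity $\Ass_A\rmK_A=\Assh A$), whereas you give a self-contained localization argument with the depth lemma that does not depend on the canonical module at all; your route is longer but logically independent of (c), which is a mild advantage. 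For (d) the paper applies the depth lemma to $0\to A\to B\to C\to 0$ to force $C=(0)$ when $A$ is Cohen--Macaulay, while you invoke Corollary \ref{1.3} ($(S_2)\Rightarrow A=\widetilde{A}$) together with $\widetilde{A}=B$; both are immediate. The only point to polish is the duality step in (c): the Matlis dual of $\H^d_{\rad B}(B)$ is $\rmK_{\widehat{B}}\cong\widehat{B}\cong\widehat{A}\otimes_AB$ rather than $\rmK_B$ itself, so you should phrase the conclusion as the paper does --- $\rmK_{\widehat{A}}\cong\widehat{A}\otimes_AB$, whence $\rmK_A$ exists and $\rmK_A\cong B$ --- although your closing remark that an invertible module over a semilocal ring is free correctly supplies the freeness of the canonical module of $B$ that this identification uses.
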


\begin{proof}
We have,  by Lemma  \ref{lemma2}, $B = \fka : \fka$ and $\fka$ is a trace ideal in $A$. Because $\height_BM= \height_A\m=d$ for all $M \in \Max B$, every system of parameters of $A$ forms a regular sequence in $B_M$, so that it forms a regular sequence for the $A$-module $B$. Hence, $\depth_AB = d$. Let $C = B/A$. Then $\dim_AC \le d-2$ since $\fka C=(0)$, so that $\operatorname{H}_{\m}^d(A) \cong \operatorname{H}_{\m}^d(B)$ as an $A$-module (here $\m$ denotes the maximal ideal of $A$). Therefore, $\rmK_{\widehat{A}}\cong \widehat{A} \otimes_A  B$ as an $\widehat{A}$-module where $\widehat{A}$ denotes the $\m$-adic completion of $A$, whence $A$ possesses the canonical module $\rmK_A$ and $\rmK_A \cong B$ as an $A$-module. We have $B \subseteq \widetilde{A}$ since $\height_A\fka \ge 2$, while $\widetilde{A} \subseteq \widetilde{B} =B$ by Corollary \ref{1.4} (2). Hence, $B = \widetilde{A}$. % (cf. \cite[Theorem 1.6]{AG}, \cite{HH}). 
Notice that $\Ass A \subseteq \Ass_AB = \Ass_A\rmK_A = \Assh A$, and we have $\Ass A = \Assh A$. If $A$ is a Cohen-Macaulay ring, then the depth lemma tells us that $\depth_AC \ge d-1$, which forces $C=(0)$ because $\dim_AC \le d-2$. Hence, $A=B$ if $t=d$, which completes the proof. 
\end{proof}

\begin{thm}\label{1}
Let $A$ be a Noetherian local ring with $d = \dim A \ge 2$ and $\depth A \ge 1$. Assume that $A$ is a quasi-unmixed ring. Let $I~(\ne A)$ be an ideal of $A$  with $\height_A I\ge 2$ and assume that $I$ contains a non-zerodivisor of $A$. We set $B = I:I$. Then the following conditions are equivalent.
\begin{enumerate}[$(1)$]
\item $B$ is a Gorenstein ring.
\item $A$ possesses the canonical module $\rmK_A$ and $I$ is a trace ideal in $A$ such that $(\rmi)$ $\rmK_A$ is a Cohen-Macaulay $A$-module and $(\rm{ii})$ there exists an exact sequence
$$0 \to A \to \rmK_A \to C \to 0$$
of $A$-modules such that $IC=(0)$.
\item $\depth_AB=d$, $A$ possesses the canonical module $\rmK_A$, and $B \cong \rmK_A$ as an $A$-module.
\end{enumerate}
When this is the case, $A$ is an unmixed ring with $B=\widetilde{A}$, and $\fka=A:B$ is a trace ideal in $A$ with $B=\fka : \fka$.
\end{thm}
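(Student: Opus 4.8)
The plan is to prove the cycle $(1)\Rightarrow(3)\Rightarrow(2)\Rightarrow(1)$ and to obtain the concluding assertion as a by-product of the first implication. Two elementary facts are used throughout. First, since $I$ contains a non-zerodivisor $a$ of $A$, we have $aB=a(I:I)\subseteq I\subseteq A$, hence $A\subseteq B\subseteq a^{-1}A\subseteq\rmQ(A)$ and $B$ is a module-finite (birational) extension of $A$. Second, $B\cdot I=(I:I)\cdot I\subseteq I$, so $I$ is an ideal of $B$ contained in the conductor $\fka=A:_{\rmQ(A)}B$; in particular $\height_A\fka\ge\height_AI\ge 2$.

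$(1)\Rightarrow(3)$, together with the final assertion. Suppose $B$ is a Gorenstein ring. Then $A\subseteq B$ and $\fka=A:B$ satisfy all three hypotheses of Corollary \ref{prop2}: $A$ is quasi-unmixed, $\height_A\fka\ge 2$, and $B$ is Gorenstein. Corollary \ref{prop2} therefore yields at once that $B=\widetilde{A}$ with $\depth_AB=d$, that $\fka$ is a trace ideal in $A$ with $B=\fka:\fka$, that $\Ass A=\Assh A$ (so $A$ is an unmixed ring), and that $A$ possesses a canonical module $\rmK_A$ with $\rmK_A\cong B$ as an $A$-module. This is exactly condition $(3)$, together with all of the concluding clauses of the theorem.

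$(3)\Rightarrow(2)$. Since $B$ is a faithful module-finite $A$-algebra, $\dim_AB=\dim B=\dim A=d$, so $\depth_AB=d$ together with $B\cong\rmK_A$ shows that $\rmK_A$ is a maximal Cohen-Macaulay $A$-module, which is clause (i). Moreover the decomposition $\H^i_\m(B)\cong\bigoplus_{M\in\Max B}\H^i_{MB_M}(B_M)$ (available because $\sqrt{\m B}$ is the Jacobson radical of $B$) combined with $\depth_AB=d$ forces $\depth B_M=\dim B_M=d$ for every $M\in\Max B$; hence $B$ is a Cohen-Macaulay ring, in particular $B$ satisfies $(S_2)$. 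As $A$ is quasi-unmixed (hence locally quasi-unmixed) and $I\subseteq A$ is an ideal of $B$ with $\height_AI\ge 2$, Lemma \ref{lemma2} shows that $I$ is a trace ideal in $A$. Finally, pulling the inclusion $A\hookrightarrow B$ back along any $A$-isomorphism $\varphi\colon B\to\rmK_A$ produces an exact sequence $0\to A\to\rmK_A\to C\to 0$ of $A$-modules with $C\cong B/A$; since $IB\subseteq I\subseteq A$ we get $IC=(0)$, which is clause (ii).

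$(2)\Rightarrow(1)$. Apply Proposition \ref{prop1.1} with $R=\widetilde{A}$: its part $(3)$ gives that $R$ is a Gorenstein ring because $\rmK_A$ is Cohen-Macaulay, and its part $(4)$ gives $R=I:I=B$ because $I$ is a trace ideal in $A$. Therefore $B$ is Gorenstein, which closes the cycle. The only step that is not purely formal is $(3)\Rightarrow(2)$, and within it the passage from the bare hypothesis $\depth_AB=d$ to the Cohen-Macaulayness (equivalently, the $(S_2)$-property) of the semilocal ring $B$ needed to invoke Lemma \ref{lemma2}; all the remaining content is supplied by Corollary \ref{prop2}, Proposition \ref{prop1.1}, and Lemma \ref{lemma2}.
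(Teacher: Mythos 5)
Your proposal is correct and follows essentially the same route as the paper: $(1)\Rightarrow(3)$ via Corollary \ref{prop2}, $(3)\Rightarrow(2)$ via Lemma \ref{lemma2} after observing that $B$ is Cohen--Macaulay, and $(2)\Rightarrow(1)$ via Proposition \ref{prop1.1}, with the concluding clauses extracted from Corollary \ref{prop2}. The only difference is that you spell out the justification (via the local cohomology decomposition over $\Max B$) that $\depth_AB=d$ makes $B$ a Cohen--Macaulay ring, a point the paper asserts without detail.
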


\begin{proof}

$(1) \Rightarrow (3)$~See Corollary \ref{prop2}.

$(3) \Rightarrow (2)$ Notice that $\grade_BI = \height_BI \ge 2$, since $B$ is a Cohen-Macaulay ring. We have the exact sequence $$0 \to A \to \rmK_A \to C \to 0$$ of $A$-modules such that $IC=(0)$, while by Lemma \ref{lemma2} $I$ is a trace ideal in $A$.

$(2) \Rightarrow (1)$ Since $\depth_A\rmK_A= d$, by Proposition \ref{prop1.1} $B$ is a Gorenstein ring.

See Lemma \ref{lemma2} and Corollary \ref{prop2} for the last assertion.
\end{proof}

\begin{cor}\label{2}
Let $A$ be a Noetherian local ring and let $I~(\ne A)$ be an ideal of $A$  with $\height_A I \ge 2$. Assume that $I$ contains a non-zerodivisor of $A$. We set $B = I:I$. Then the following conditions are equivalent.
\begin{enumerate}[$(1)$]
\item $B$ is a Gorenstein ring, $A$ is a homomorphic image of a Cohen-Macaulay ring, and $\Min A= \Assh A$.
\item $A$ possesses the canonical module $\rmK_A$ and $I$ is a trace ideal in $A$ such that $(\rmi)$ $\rmK_A$ is a Cohen-Macaulay $A$-module and $(\rm{ii})$ there exists an exact sequence
$$0 \to A \to \rmK_A \to C \to 0$$
of $A$-modules such that $IC=(0)$.
\item $\depth_AB=d$, $A$ possesses the canonical module $\rmK_A$, and $B \cong \rmK_A$ as an $A$-module.
\end{enumerate}
When this is the case, $A$ is an unmixed ring with $B=\widetilde{A}$, and $\fka=A:B$ is a trace ideal in $A$ with $B=\fka : \fka$.
\end{cor}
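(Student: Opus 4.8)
The plan is to reduce Corollary \ref{2} to Theorem \ref{1} by showing that its hypotheses (in each of the three versions) force $A$ to be a quasi-unmixed local ring of dimension $\ge 2$ with $\depth A \ge 1$, after which the three equivalences, as well as the concluding assertion, are immediate transcriptions of Theorem \ref{1}. The only genuinely new content relative to Theorem \ref{1} is the replacement of the blanket hypothesis "$A$ is quasi-unmixed'' by the more checkable package "$A$ is a homomorphic image of a Cohen-Macaulay ring and $\Min A = \Assh A$'' appearing in condition (1); so the work is to see that this package is equivalent to quasi-unmixedness under the standing assumption that $I:I$ is Gorenstein, and that conditions (2) and (3) each already entail quasi-unmixedness on their own.

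First I would record the easy dimension and depth bounds. Since $I$ contains a non-zerodivisor and $\height_A I \ge 2$, we have $d = \dim A \ge 2$ automatically, and the presence of a non-zerodivisor in $\m$ (note $I \subseteq \m$ since $I \ne A$) gives $\depth A \ge 1$; so the numerical hypotheses of Theorem \ref{1} hold in all three cases with no extra assumption. Next, I would treat the quasi-unmixedness. Under (1): $A$ is a homomorphic image of a Cohen-Macaulay ring, hence $A$ is universally catenary and possesses a canonical module after completion; moreover for such rings $A$ is quasi-unmixed precisely when $\dim \widehat{A}/\fkp = d$ for all $\fkp \in \Ass \widehat{A}$, equivalently (using that homomorphic images of Cohen-Macaulay rings are formally equidimensional iff they are equidimensional and catenary — Ratliff's theorem) when $A$ is equidimensional, i.e. $\Min A = \Assh A$. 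This is exactly the hypothesis imposed in (1). Under (2) or (3): $A$ possesses a Cohen-Macaulay (indeed maximal Cohen-Macaulay) canonical module $\rmK_A$, and $\Ass A \subseteq \Ass_A \rmK_A = \Assh A$ while trivially $\Assh A \subseteq \Ass A$; since the existence of a canonical module forces $A$ to be (a homomorphic image of a Gorenstein local ring, hence) universally catenary, $A$ is equidimensional and universally catenary, hence quasi-unmixed. Thus in every case the full hypothesis list of Theorem \ref{1} is met.

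With quasi-unmixedness in hand, the proof closes as follows. The implications $(1)\Rightarrow(3)$, $(3)\Rightarrow(2)$, $(2)\Rightarrow(1)$ of the present corollary, together with the final sentence, are obtained by invoking the corresponding parts of Theorem \ref{1}, once one checks the bookkeeping that the extra clauses "$A$ is a homomorphic image of a Cohen-Macaulay ring'' and "$\Min A = \Assh A$'' in (1) are recovered at the end: the latter follows from Corollary \ref{prop2}(b) (which gives $\Ass A = \Assh A$, hence $\Min A = \Assh A$), and the former is inherited in the $(2)\Rightarrow(1)$ and $(3)\Rightarrow(1)$ directions from the existence of $\rmK_A$. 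The main obstacle I anticipate is the equivalence, in the presence of "homomorphic image of a Cohen-Macaulay ring'', between $\Min A = \Assh A$ and quasi-unmixedness of $A$: one must argue carefully at the level of the completion $\widehat{A}$, using that for a catenary equidimensional image of a Cohen-Macaulay ring the completion stays equidimensional (again Ratliff's formal-equidimensionality criterion, as cited for Lemma \ref{lemma2} via \cite{Ratliff}). Everything else is a direct appeal to Theorem \ref{1}, Corollary \ref{prop2}, and Lemma \ref{lemma2}.
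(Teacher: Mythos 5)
Your overall architecture — reduce everything to Theorem \ref{1} by checking that each of the three conditions forces $A$ to be quasi-unmixed with $d\ge 2$ and $\depth A\ge 1$ — is exactly the paper's strategy, and your treatment of condition (1) (homomorphic image of a Cohen--Macaulay ring plus $\Min A=\Assh A$ gives quasi-unmixedness via Ratliff) matches the paper. The numerical reductions $d\ge 2$ and $\depth A\ge 1$ are also fine.

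There is, however, a genuine gap in your handling of the directions $(2)\Rightarrow(1)$ and $(3)\Rightarrow(1)$, and it infects your quasi-unmixedness argument under (2)/(3) as well. You assert that ``the existence of a canonical module forces $A$ to be (a homomorphic image of a Gorenstein local ring, hence) universally catenary,'' and later that the clause ``$A$ is a homomorphic image of a Cohen--Macaulay ring'' in (1) ``is inherited \dots from the existence of $\rmK_A$.'' Neither claim is a valid inference: in this paper $\rmK_A$ means a finitely generated module whose completion is $\rmK_{\widehat{A}}$ (Aoyama's sense), and the mere existence of such a module does not make $A$ a homomorphic image of a Gorenstein (or Cohen--Macaulay) ring — that would amount to the existence of a dualizing complex, which is strictly stronger. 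Being a homomorphic image of a Cohen--Macaulay ring is part of the \emph{conclusion} in condition (1), so it must be proved, and this is precisely where the paper does real work: from $(0):_A\rmK_A=(0)$ and the Cohen--Macaulayness of $\rmK_A$ one deduces that all formal fibers of $A$ are Cohen--Macaulay, the embedding $A\hookrightarrow\rmK_A$ gives unmixedness, and only then does Kawasaki's theorem \cite[Theorem 1.1]{kawasaki} yield that $A$ is a homomorphic image of a Cohen--Macaulay ring (after which quasi-unmixedness follows as in your argument for (1)). Your quasi-unmixedness conclusion under (2)/(3) can in fact be salvaged without Kawasaki — $(0):_A\rmK_A=(0)$ forces $(0):_{\widehat{A}}\rmK_{\widehat{A}}=(0)$, i.e.\ $\widehat{A}$ is unmixed, hence equidimensional — but the clause ``homomorphic image of a Cohen--Macaulay ring'' in (1) cannot be obtained this way, and your proposal as written does not prove it.
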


\begin{proof}
Suppose that Condition (2) or (3) is satisfied. Then, $(0):_A\rmK_A= (0)$ and $\rmK_A$ is a Cohen-Macaulay $A$-module, so that all the formal fibers of $A$ are Cohen-Macaulay, while $A$ is unmixed since $A$ is a submodule of $\rmK_A$. Therefore, thanks to Kawasaki's theorem \cite[Theorem 1.1]{kawasaki}, $A$ is a homomorphic image of a Cohen-Macaulay ring. On the other hand, once $A$ is a homomorphic image of a Cohen-Macaulay ring with $\Min A = \Assh A$, $A$ is quasi-unmixed. These observations enable us to assume, from the beginning, that $A$ is quasi-unmixed, whence the assertion follows from Theorem \ref{1}.
\end{proof}

\section{Relationship with Gorenstein Rees algebras of powers of parameters}\label{powers}

\begin{thm}[{cf. \cite{GI}}]\label{GI}
Let $(A,\m)$ be a Noetherian complete local ring such that $d=\dim A \ge 2$, $\depth A=1$, and $\Min A = \Assh A$. Let $I$ be an $\m$-primary ideal of $A$. We set $B = I :I$ and $\fka = A:B$, and suppose that the following three conditions are satisfied.
\begin{enumerate}
\item[$(1)$] $B$ is a Gorenstein ring.
\item[$(2)$] $A \ne B$ and $\rmr_A(B/A)=1$, that is the socle of the $A$-module $B/A$ has length one.
\item[$(3)$] $\fka =(a_1, a_2, \ldots, a_d)B$ for some $a_1, a_2, \ldots, a_d \in \m$.
\end{enumerate}
Then, $B = \fka : \fka$, and  the Rees algebra $\calR_A(Q^d)$ of $Q^d$ is a Gorenstein ring, where $Q=(a_1, a_2, \ldots, a_d)$ in $A$.
\end{thm}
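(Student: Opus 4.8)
The statement has two parts to verify: that $B=\fka:\fka$, and that $\calR_A(Q^d)$ is Gorenstein. The first is immediate from the machinery already developed: since $B$ is a Gorenstein ring (hence satisfies $(S_2)$), $A$ is a homomorphic image of a Cohen–Macaulay ring with $\Min A=\Assh A$ (so quasi-unmixed), and $\fka=A:B$ has $\height_A\fka\ge 2$ because $B=\widetilde A$ is a module-finite, birational, $(S_2)$ extension of $A$ with $A\ne B$ of depth $d$—so $B/A$ has dimension $\le d-2$ and $\fka$ annihilates it. Then Lemma~\ref{lemma2} applied to the ideal $\fka$ of $B$ (which lies in $A$ and has height $\ge 2$) gives $\fka$ trace in $A$ and $B=\fka:\fka$. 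Equivalently, this is the last assertion of Corollary~\ref{2}/Theorem~\ref{1}, which apply here since $\depth A=1<d$ forces $A\ne B$.

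**Reducing the Rees-algebra statement to Theorem~\ref{main0}.** The plan is to verify the hypotheses and condition~(2) of Theorem~\ref{main0} for the pair $(A,Q)$, where $Q=(a_1,\dots,a_d)A$. First I must check the cohomological hypothesis: $\H^i_\m(A)=(0)$ for $i\notin\{1,d\}$ and $\H^1_\m(A)$ finitely generated. Since $\depth A=1$ and $C=B/A$ satisfies $\fka C=0$ with $\dim_A C\le d-2$, the long exact cohomology sequence of $0\to A\to B\to C\to 0$ together with $\depth_A B=d$ (from Corollary~\ref{prop2}, as every s.o.p.\ of $A$ is $B$-regular) gives $\H^0_\m(A)=0$, $\H^1_\m(A)\cong\H^0_\m(C)$, and $\H^i_\m(A)\cong\H^{i-1}_\m(C)$ for $2\le i\le d-1$, and $\H^i_\m(A)\cong\H^i_\m(B)$ in the top degrees; since $\dim_A C\le d-2$ its local cohomology vanishes above degree $d-2$ and is nonzero only in a range that, combined with $B$ Cohen–Macaulay of dimension $d$, needs to be pinned down. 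Here is where I expect to have to work: I want $\H^1_\m(A)$ finitely generated and all middle cohomology to vanish. Finite generation of $\H^1_\m(A)\cong\H^0_\m(C)\subseteq C$ is automatic because $C$ is a finitely generated $A$-module. The vanishing of $\H^i_\m(A)$ for $2\le i\le d-1$ is the delicate point and should follow because $B$ is Cohen–Macaulay and $C$ has small dimension—but one must rule out intermediate local cohomology of $C$, which is controlled by the hypothesis $\rmr_A(B/A)=1$ forcing $C$ to be "thin" (indeed $\Ass_A C$ should be supported in high codimension, and the Gorensteinness of $B$ pushes $C$ to be itself close to Cohen–Macaulay); alternatively this is exactly the content that makes $B=\widetilde A$ a Gorenstein $(S_2)$-ification, so the structure theory of Section~2 (Corollaries~\ref{1.4},~\ref{1.6},~\ref{1.7}) applies.

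**Translating condition~(2) of Theorem~\ref{main0}.** Granting the hypotheses, I must identify the three pieces: $\H^1_\m(A)\ne 0$ (clear, since $A\ne B$ so $C\ne 0$ and $\depth A=1$ means $\H^1_\m(A)\ne 0$); $\rmr_A(\H^1_\m(A))=1$; and $(0):_A\H^1_\m(A)=\sum_{i=1}^d\rmU(a_iA)$. For the type computation: by Proposition~\ref{prop1.1}(1)–(2) (applicable since we have $0\to A\to \rmK_A\to C\to 0$ with $\rmK_A\cong B$ and $IC=0$, here taking $I=\fka$), $\H^1_\m(A)$ is Matlis-dual to a module closely tied to $C$, and the hypothesis $\rmr_A(B/A)=1$ is designed precisely to make this type equal to $1$. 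For the colon ideal: the key identity is $(0):_A\H^1_\m(A)=\fka$ (the conductor), proved via Matlis duality using $\H^1_\m(A)\cong\H^0_\m(C)$ and $\fka=\Ann_A C$; then $\fka=\sum_i a_iB\cap A$, and I would show $a_iB\cap A=\rmU(a_iA)$ using Theorem~\ref{1.5}/Corollary~\ref{1.6} (since $B=\widetilde A$, the element $\frac{x}{1}$-type description gives $a_i\widetilde A\cap A=\rmU(a_iA)$, i.e.\ unmixed components), so $\fka=\sum_{i=1}^d\rmU(a_iA)$ as required. Assembling: Theorem~\ref{main0} then yields that $\calR_A(Q^d)$ is Gorenstein. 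The main obstacle, as noted, is verifying the vanishing of the intermediate local cohomology $\H^i_\m(A)$ for $2\le i\le d-1$ cleanly—everything else is bookkeeping with Matlis duality and the trace/$(S_2)$-ification dictionary already in place.
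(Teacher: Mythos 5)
Your overall route is the paper's route—establish $B=\fka:\fka$ via Theorem \ref{1}, then verify the hypotheses and condition (2) of Theorem \ref{main0} using the exact sequence $0\to A\to B\to B/A\to 0$ and Corollary \ref{1.6} for the identity $a_iB=\rmU(a_iA)$—but there is a genuine gap exactly where you flag one, and the mechanism you propose to close it is the wrong one. You never use the hypothesis that $I$ is $\m$-primary. Since $B=I:I$, we have $IB\subseteq I\subseteq A$, so $I\subseteq\fka$ and $I$ annihilates $C=B/A$; as $B$ is a finitely generated $A$-module and $I$ is $\m$-primary, $C$ has \emph{finite length}. Hence $\H^0_\m(C)=C$ and $\H^i_\m(C)=(0)$ for $i\ge 1$, and the long exact sequence (with $\depth_AB=d$, so $\H^i_\m(B)=(0)$ for $i<d$) immediately gives $\H^1_\m(A)\cong B/A$ and $\H^i_\m(A)=(0)$ for $i\notin\{1,d\}$. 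Your substitute estimate $\dim_AC\le d-2$ is too weak to kill the intermediate cohomology of $C$, and your suggestion that $\rmr_A(B/A)=1$ "forces $C$ to be thin" is not how the vanishing is obtained—that hypothesis plays no role here; it is used only to get $\rmr_A(\H^1_\m(A))=1$ after the isomorphism $\H^1_\m(A)\cong B/A$ is in hand. (It also gives $\height_A\fka=d$ at once, since $I\subseteq\fka$, replacing your somewhat circular argument for $\height_A\fka\ge 2$.)

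Once that observation is made, the rest of your outline collapses to the paper's computation and no Matlis duality is needed: $(0):_A\H^1_\m(A)=\Ann_A(B/A)=A:B=\fka$ directly, $\rmr_A(\H^1_\m(A))=\rmr_A(B/A)=1$ by hypothesis (2), and since each $a_i$ is $B$-regular with $a_iB\subseteq\fka\subseteq A$, Corollary \ref{1.6} gives $B=a_i^{-1}\rmU(a_iA)$, whence $\fka=\sum_{i=1}^d a_iB=\sum_{i=1}^d\rmU(a_iA)$, and Theorem \ref{main0} applies. So the proposal is repairable in one line, but as written the central hypothesis-verification step is unproved and your stated strategy for proving it would not work.
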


\begin{proof}
We have $B=\fka : \fka$ by Theorem \ref{1}. Since $I{\cdot}(B/A)=(0)$, applying the functor $\rmH_\m^i(*)$ to the exact sequence $$0 \to A \to B \to B/A \to 0$$
of $A$-modules, we get $\rmH_\m^1(A) \cong B/A$ and $\rmH_\m^i(A) = (0)$ for all $i \not\in \{1, d\}$. Hence $$(0):_A\rmH_\m^1(A) = \fka\ \ \ \text{and} \ \  \rmr_A(\rmH^1_\m(A))=1.$$ On the other hand, since $\depth_AB=d$ and $a_1, a_2, \ldots, a_d$ forms a system of parameters in $A$, the sequence $a_1, a_2, \ldots, a_d$ is $B$-regular. Hence, each $a_i$ is a non-zerodivisor of $A$ and $a_i B \subseteq A$, so that $B = \widetilde{A} \subseteq a_i^{-1}A$, whence $B =a_i^{-1}\rmU(a_iA)$ by Corollary \ref{1.6}, where $\rmU(a_iA)$ denotes the unmixed component of the ideal $a_iA$. Hence $$\fka = \sum_{i=1}^da_iB= \sum_{i=1}^d \rmU(a_iA).$$
Consequently, $\calR_A(Q^d)$ is a Gorenstein ring, thanks to Theorem \ref{main0}.  
\end{proof}

\subsection{The simplest examples given by specific Buchsbaum rings}
Let $(S,\n)$ be a Gorenstein complete local ring with $d = \dim S \ge 2$ and assume that $S$ contains a coefficient field $k$. Let $\fkq=(a_1, a_2, \ldots, a_d)S$ be a parameter ideal of $S$ such that $\fkq \ne \n$. We set $A = k +\fkq$. Then, $A$ is a subring of $S$, and $\fkq$ is a maximal ideal in $A$, since $k \cong A/\fkq$. We have $\ell_A(S/A) = \ell_A(S/\fkq) -1 < \infty$, where $\ell_A(*)$ denotes the length. Therefore, $S$ is a finitely generated $A$-module, so that $(A, \fkq)$ is a Noetherian complete local ring with $\dim A = d$. Let $\m~(=\fkq)$ stand for the maximal ideal of $A$. We then have $\rmH_\m^i(A)=(0)$ for all $i \not\in \{1,d\}$ and $\rmH_\m^1(A) = S/A$, because $\depth_AS=d$ and $\ell_A(S/A) < \infty$. Hence, $\depth A = 1$, and $A$ is a Buchsbaum ring (\cite{G1, G2}). We have $S = \m : \m$, and $\Ass A = \Ass_AS = \Assh A$, since $\depth_AS=d$. Thus, we get the following.

\begin{thm}
Suppose that $\ell_S(S/\fkq)=2$. Then, $\calR_A(Q^d)$ is a Gorenstein ring, where $Q = (a_1, a_2, \ldots, a_d)A$.
\end{thm}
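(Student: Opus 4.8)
The plan is to verify the three hypotheses of Theorem~\ref{GI} for the ring $A = k + \fkq$, the ideal $I = \m$, and the overring $B = \m : \m = S$, and then simply invoke that theorem. The geometric setup has already done most of the work: the discussion preceding the statement establishes that $(A,\m)$ is a Noetherian complete local ring with $\dim A = d \ge 2$, $\depth A = 1$, $\rmH_\m^i(A) = (0)$ for $i \notin \{1,d\}$, $\rmH_\m^1(A) = S/A$, that $S = \m:\m$ is a finitely generated $A$-module which is a Gorenstein ring with $\depth_A S = d$, and that $\Ass A = \Assh A$ (so in particular $\Min A = \Assh A$). Thus Condition~$(1)$ of Theorem~\ref{GI} holds with $B = S$, and the standing hypotheses of Theorem~\ref{GI} are satisfied.

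The first real step is to pin down $\fka = A : B = A : S$. Since $S/A$ is annihilated by $\m = \fkq$, we have $\fkq \subseteq \fka$; conversely $\fka \subseteq \m$ because $A \ne S$ forces $\fka$ to be a proper ideal of the local ring $A$, hence contained in its maximal ideal $\m = \fkq$. Therefore $\fka = \fkq = (a_1, a_2, \ldots, a_d)A$. Now I must upgrade this to the statement $\fka = (a_1, \ldots, a_d)B$, i.e.\ $\fka = \fkq S$, which is Condition~$(3)$. But $\fkq S = \fkq$ as an ideal of $S$ by construction of $A = k + \fkq$ (the generators $a_1, \ldots, a_d$ of $\fkq$ as an $S$-ideal lie in $A$), and as an $A$-submodule $\fkq = \fka = A : S$; so $\fka = (a_1, \ldots, a_d)B$ with the $a_i \in \m$, giving Condition~$(3)$.

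The remaining point is Condition~$(2)$: $A \ne B$ and $\rmr_A(B/A) = 1$, i.e.\ $\soc_A(S/A)$ has length one. That $A \ne B$ is immediate from $\ell_S(S/\fkq) = 2 > 1$, which gives $\ell_A(S/A) = \ell_S(S/\fkq) - 1 = 1$. In fact this computation does the whole job at once: $S/A \cong \rmH_\m^1(A)$ has length exactly $1$ as an $A$-module, so it is isomorphic to $A/\m = k$, whence it coincides with its own socle and $\rmr_A(S/A) = \ell_A(\soc_A(S/A)) = 1$. Thus Condition~$(2)$ holds as well. Having checked all three conditions together with the ambient hypotheses, Theorem~\ref{GI} applies verbatim and yields that $B = \fka : \fka$ and that $\calR_A(Q^d)$ is a Gorenstein ring, where $Q = (a_1, a_2, \ldots, a_d)A$, which is the assertion.

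I expect no serious obstacle here: the preceding paragraph of the paper has already arranged that $S = \m:\m$ is Gorenstein and that $A$ has the right cohomological shape, so the proof reduces to the bookkeeping identifications $\fka = \fkq = \fkq S$ and the one-dimensional length computation $\ell_A(S/A) = \ell_S(S/\fkq) - 1 = 1$. The only place demanding a moment's care is making sure the equality $\fka = (a_1,\ldots,a_d)B$ is read correctly as an equality of ideals of $B = S$ (not merely of $A$-submodules), but this is transparent from $A = k + \fkq$.
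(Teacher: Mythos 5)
Your proof is correct and is essentially the argument the paper intends: the theorem carries no separate proof precisely because, after the setup paragraph, it is a direct application of Theorem~\ref{GI} with $I=\m$ and $B=S=\m:\m$, and you verify the three conditions exactly as required ($\fka=A:S=\fkq=(a_1,\ldots,a_d)B$ since $\m S=\fkq\subseteq A$, and $\ell_A(S/A)=\ell_S(S/\fkq)-1=1$, so $S/A\cong A/\m$ and $\rmr_A(S/A)=1$). One harmless slip: the intermediate equality $\fkq=(a_1,\ldots,a_d)A$ is false in general (for $S=k[[X,Y]]$ and $\fkq=(X^2,Y)S$ one has $X^3\in\fkq$ but $X^3\notin(X^2,Y)A$), though nothing in your argument uses it --- Condition $(3)$ only needs $\fka=(a_1,\ldots,a_d)B$, which you establish correctly.
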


\begin{cor}
Let $S= k[[X_1, X_2, \ldots, X_d]]$~$(d \ge 2)$ be the formal power series ring over a field $k$ and let $\fkq = (X_1^2, X_2, \ldots, X_d)S$. Then, $\calR_A(Q^d)$ is a Gorenstein ring, where $A = k + \fkq$ and $Q=(X_1^2, X_2, \ldots, X_d)A$.
\end{cor}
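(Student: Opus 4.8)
The plan is to deduce this corollary directly from the immediately preceding Theorem, whose conclusion is exactly the assertion to be proved once we check that the specific pair $(S,\fkq)$ fits the standing hypotheses of the subsection and satisfies $\ell_S(S/\fkq)=2$. So the first step is to record that $S=k[[X_1,\dots,X_d]]$ is a regular local ring, hence a Gorenstein complete local ring, of dimension $d\ge 2$, with maximal ideal $\n=(X_1,\dots,X_d)S$ and coefficient field $k$. This is standard and needs no argument beyond a citation of the structure theorem for formal power series rings.

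Next I would verify that $\fkq=(X_1^2,X_2,\dots,X_d)S$ is a parameter ideal with $\fkq\ne\n$. Since $S$ is Cohen--Macaulay of dimension $d$, it suffices to note that the $d$ elements $X_1^2,X_2,\dots,X_d$ generate an $\n$-primary ideal; equivalently, $S/\fkq$ is Artinian, which is clear from the identification below. For $\fkq\ne\n$, observe that reducing modulo $(X_2,\dots,X_d)S$ sends $\fkq$ to $(X_1^2)k[[X_1]]$, which does not contain $X_1$; hence $X_1\notin\fkq$.

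The one small computation is the length: one has a natural isomorphism $S/\fkq\cong k[[X_1]]/(X_1^2)$ of $k$-algebras (kill $X_2,\dots,X_d$), and $k[[X_1]]/(X_1^2)$ has $k$-basis $\{\overline 1,\overline{X_1}\}$, so $\ell_S(S/\fkq)=2$. With the hypothesis $\ell_S(S/\fkq)=2$ now in force and the ambient setup of the subsection (giving $A=k+\fkq$, $\m=\fkq$, $A$ Buchsbaum with $\depth A=1$, $S=\m:\m$, $\Ass A=\Assh A$, etc.) automatically in place, the preceding Theorem applies with $a_1=X_1^2$, $a_2=X_2,\dots,a_d=X_d$, and yields that $\calR_A(Q^d)$ is a Gorenstein ring, where $Q=(X_1^2,X_2,\dots,X_d)A$.

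There is essentially no genuine obstacle here: the corollary is a direct specialization of the preceding Theorem, and the only thing demanding a line of verification is the identification $S/\fkq\cong k[[X_1]]/(X_1^2)$ together with the resulting length count $\ell_S(S/\fkq)=2$; the parameter-ideal condition and $\fkq\ne\n$ follow from the same identification, and all remaining structural facts about $A$ were already established in the subsection preceding the Theorem.
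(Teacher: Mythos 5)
Your proposal is correct and follows exactly the route the paper intends: the corollary is stated without proof precisely because it is the specialization of the preceding theorem to $S=k[[X_1,\ldots,X_d]]$ and $\fkq=(X_1^2,X_2,\ldots,X_d)S$, and the only hypothesis needing verification is $\ell_S(S/\fkq)=2$, which you check correctly via $S/\fkq\cong k[[X_1]]/(X_1^2)$. Your additional checks (that $\fkq$ is a parameter ideal with $\fkq\ne\n$ and that $S$ is Gorenstein complete with coefficient field $k$) are exactly the standing assumptions of the subsection, so nothing is missing.
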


\subsection{The case where $A$ is an integral domain}
Let $B=k[[t_1, t_2, \ldots, t_n,s]]$ ($n \ge 1$) be the formal power series ring over a field $k$, and set $V = k[[t_1, t_2, \ldots, t_n]]$, $d=n+1$. Hence, $B =V[[s]]$. We choose a subring $P$ of $V$ so that $V$ is a finitely generated $P$-module and $P:V \ne (0)$. Therefore,  $P$ is a Noetherian local ring and $V=\overline{P}$. We set $\fkc = P:V$ and assume that $\sqrt{\fkc}=\n$, where $\n$ denotes the maximal ideal of $P$.

We set $A = P+sB$. Hence, $A$ is a subring of $B$ and $B$ is a finitely generated $A$-module, because $B/A$ is a homomorphic image of $B/sB$ and $B/sB$ is a finitely generated $P$-module. Therefore, $A$ is a Noetherian complete local ring with $\dim A=d \ge 2$, and $B = \overline{A}$. We set $\fka = A:B$. Let $\m$ denote the maximal ideal of $A$.

\begin{lem}\label{prop1}
$\fka=\fkc + sB$ and $\fka$ is an $\m$-primary ideal of $A$. Hence, $B=\fka:\fka$ and $\depth A=1$.
\end{lem}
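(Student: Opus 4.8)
The plan is to compute the conductor $\fka = A:B$ directly from the explicit descriptions $A = P + sB$, $B = V[[s]]$, and then verify the primality/primarity claims by passing to suitable quotients. First I would show $\fkc + sB \subseteq \fka$: an element of $sB$ multiplies $B = V[[s]]$ into $sB \subseteq A$, and an element of $\fkc = P:V$ multiplies $V$ into $P$, hence multiplies $B = V \oplus sB$ (as $P$-modules, roughly) into $P + sB = A$; a small amount of care is needed because $B = V[[s]]$, but writing $b = v + sb'$ with $v \in V$, $b' \in B$, one gets $cb = cv + scb' \in P + sB = A$ for $c \in \fkc$, so indeed $\fkc + sB \subseteq \fka$.

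For the reverse inclusion, take $x \in \fka$ and write $x = p + sb$ with $p \in P$, $b \in B$ (using $B = P + sB$ as sets, which follows from $V = P + (\text{something in }\fkc)$... more cleanly: write $x = v + sb$ with $v \in V$, $b \in B$, since $B = V \oplus sB$ over $k$). Since $xB \subseteq A$ and $sB \subseteq A$ already, we get $vB \subseteq A$; intersecting with $V$ (the degree-zero part in $s$) gives $vV \subseteq A \cap V = P$, so $v \in P:V = \fkc$. Hence $x = v + sb \in \fkc + sB$, proving $\fka = \fkc + sB$. The main obstacle here is bookkeeping with the $s$-adic decomposition of $B = V[[s]]$ — making sure the "constant term in $s$" argument is valid, i.e. that $A \cap V = P$, which holds because $A = P + sB$ and $sB \cap V = (0)$.

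Next, to see $\fka$ is $\m$-primary: we have $A/\fka = A/(\fkc + sB) \cong P/\fkc$ (since $A = P + sB$ and $A \cap (\fkc + sB) \supseteq \fkc$, with the map $P \to A/(\fkc+sB)$ surjective with kernel $\fkc$), wait — more precisely $A/(\fkc + sB) \cong P/\fkc$ because $sB \subseteq A$ and $(\fkc + sB) \cap P = \fkc$. Now $\sqrt{\fkc} = \n$ is the maximal ideal of $P$ by hypothesis, so $P/\fkc$ is Artinian local, hence $\dim A/\fka = 0$, i.e. $\fka$ is $\m$-primary (note $\fka \neq A$ since $\fkc \neq P$, as $P:V \neq (0)$ forces $\fkc$ proper because $1 \notin \fkc$ when $P \neq V$; and $P \neq V$ since $P:V \neq (0)$ would be all of $V$ only if $P = V$). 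In particular $\height_A \fka \geq 2$ because $\dim A = d \geq 2$ and $A$ is... here I would invoke that $A$ is a complete local ring which is a domain or at least equidimensional — actually $\height_A \fka = \dim A - \dim A/\fka = d$ if $A$ is quasi-unmixed; but it suffices that every minimal prime of $\fka$ has height $\geq 2$, which follows since $\fka$ is $\m$-primary and $d \geq 2$ provided $A$ has no embedded issues at $\m$ — simplest is: $\fka$ $\m$-primary with $\dim A = d \geq 2$ gives $\height_A \fka = d \geq 2$ whenever $A$ is equidimensional, and $B = \overline{A}$ with $A$ a domain... Actually $A = P + sB$ need not be a domain in general, but $B$ is a domain and $A \subseteq B$, so $A$ is a domain; hence $A$ is equidimensional and $\height_A \fka = d \geq 2$.

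Finally, with $\height_A \fka \geq 2$ established and $B$ a finitely generated $A$-module satisfying $(S_2)$ (indeed $B = V[[s]]$ is a regular ring, hence Cohen--Macaulay, hence $(S_2)$) with $\fka = A:B$ an ideal of $B$ contained in $A$, Lemma \ref{lemma2} applies (note $A$ is a domain, so in particular locally quasi-unmixed): we conclude $B = \fka : \fka$. For $\depth A = 1$: apply $\rmH_\m^i(-)$ to $0 \to A \to B \to B/A \to 0$; since $\fka \cdot (B/A) = (0)$ and $\fka$ is $\m$-primary, $B/A$ has finite length, so $\rmH_\m^0(B/A) = B/A$ and $\rmH_\m^i(B/A) = 0$ for $i > 0$, while $\depth_A B = d \geq 2$ (as $B$ is Cohen--Macaulay of dimension $d$ and module-finite over $A$). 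The long exact sequence gives $\rmH_\m^0(A) \hookrightarrow \rmH_\m^0(B) = 0$, so $\depth A \geq 1$, and $\rmH_\m^1(A) \cong B/A \neq 0$ since $A \neq B$ (because $s \in B \setminus A$... indeed $s \notin P + sB$? no, $s \in sB \subseteq A$ — rather $1 \in B$, but is $A = B$? No: e.g. any $v \in V \setminus P$ lies in $B \setminus A$ since $A \cap V = P \subsetneq V$). Hence $\depth A = 1$. The main obstacle is simply the careful verification of $A \cap V = P$ and of $A/\fka \cong P/\fkc$, everything else being a direct application of Lemma \ref{lemma2} and standard local cohomology.
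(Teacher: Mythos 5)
Your argument is correct and follows essentially the same route as the paper's: the same two-sided computation of $\fka=\fkc+sB$ using $A\cap V=P$, the same appeal to Lemma \ref{lemma2} for $B=\fka:\fka$, and the same local-cohomology comparison of $A$ with $B$ for $\depth A=1$; you merely spell out the hypotheses (quasi-unmixedness via $A$ being a domain, $(S_2)$ for $B$) that the paper leaves implicit. The only blemish is your garbled justification that $P\ne V$ (hence $A\ne B$): the clean reason is that $\sqrt{\fkc}=\n$ forces $\fkc\subseteq\n\subsetneq P$, so $1\notin\fkc=P:V$ and therefore $V\not\subseteq P$.
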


\begin{proof}
Let $b \in B$ and write $b = v + x$ with $v \in V$ and $x \in sB$. We then have, for each $c \in \fkc$, $cb = cv+cx \in P + sB =A$, whence $\fkc+sB \subseteq \fka$. Conversely, let $\varphi \in \fka$ and write $\varphi = a + x$ with $a \in P$ and $x \in sB$. Then $a \in A:B$, so that $aV \subseteq A \cap V= P + (sB \cap V) =P$, whence $a \in \fkc $ and  $\varphi \in \fkc + sB$. Therefore, $\fka = \fkc + sB$, so that $\fka$ is an $\m$-primary ideal of $A$. Consequently, $B=\fka : \fka$ by Lemma \ref{lemma2}. The last assertion follows from the fact that $\depth_AB=d$ and $\ell_A(B/A) < \infty$.
\end{proof}

\begin{prop}
$\rmr_A(B/A) = \rmr_P(V/P)$.
\end{prop}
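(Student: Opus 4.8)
The plan is to relate the $A$-module $B/A$ to the $P$-module $V/P$ directly, exploiting the explicit description $A = P + sB$, $B = V[[s]]$. First I would observe that $B/A = (P + sB + \text{(stuff)})/\cdots$; more precisely, since $B = V \oplus sB$ as $P$-modules (each $b \in B$ writes uniquely as $b = v + x$ with $v \in V$, $x \in sB$) and $A = P + sB$, the inclusion $V \hookrightarrow B$ induces an isomorphism of $P$-modules
$$
V/P \;\xrightarrow{\ \sim\ }\; B/A .
$$
So as a $P$-module, $B/A \cong V/P$. This is the key structural identification and it is essentially immediate from Lemma \ref{prop1}'s computation (the same decomposition $A \cap V = P$ used there).

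Next I would pin down the socle. Recall $\rmr_A(B/A) = \dim_{A/\m}\Soc_A(B/A) = \dim_{A/\m}\big((0) :_{B/A} \m\big)$, and similarly $\rmr_P(V/P) = \dim_{P/\n}\big((0):_{V/P}\n\big)$. Both residue fields are $k$, since $\m \cap P = \n$ and $A/\m \cong k \cong P/\n$. Under the isomorphism $B/A \cong V/P$ of $P$-modules, the $\n$-socle $(0):_{V/P}\n$ corresponds to a $k$-subspace of $B/A$; it remains to check that this subspace is exactly the $\m$-socle $(0):_{B/A}\m$. Since $\m = \fkq$-type data here is $\m = \n + sB$ (the maximal ideal of $A = P + sB$ is $\n \oplus sB$ inside $P \oplus sB$), and $sB \subseteq A$ acts as zero on $B/A$, the action of $\m$ on $B/A$ factors through $\m/(sB) \cong \n$. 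Hence annihilation by $\m$ is the same as annihilation by $\n$, and the two socles coincide as subsets of $B/A \cong V/P$. This gives $\Soc_A(B/A) \cong \Soc_P(V/P)$ as $k$-vector spaces, hence equal dimensions, which is the claim.

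The one point that needs genuine care — the main obstacle, such as it is — is the identification $\m = \n + sB$ and the verification that $sB$ really does annihilate $B/A$: we have $sB \subseteq A$, so for $\xi \in B/A$ and $y \in sB$, $y\xi \in (sB \cdot B + A)/A$; but $sB \cdot B \subseteq sB \subseteq A$, so indeed $y\xi = 0$ in $B/A$. Thus the $A$-module structure on $B/A$ is pulled back from the $A/sB \cong P$-module structure, and $A/sB \cong P$ sends $\m/sB$ to $\n$. Once this is in place, everything reduces to the $P$-module isomorphism $B/A \cong V/P$ already obtained, and the socle dimensions match on the nose. I would write this up in three short steps: (i) the $P$-module isomorphism $B/A \cong V/P$; (ii) $sB$ annihilates $B/A$ and $\m/sB \cong \n$, so $A$- and $P$-module structures agree; (iii) conclude $\Soc_A(B/A) \cong \Soc_P(V/P)$ and take dimensions.
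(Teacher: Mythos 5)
Your argument is correct and is essentially the paper's own proof: the authors likewise identify $V \cong B/sB$ and $P \cong A/sB$, deduce $V/P \cong B/A$ as $P$-modules, and conclude $\rmr_A(B/A)=\rmr_{A/sB}(B/A)=\rmr_P(V/P)$ precisely because $sB$ annihilates $B/A$. One small imprecision: the residue field of $P$ need not be $k$ (it is $k_0$ in case (3) of the theorem that follows), but your argument only actually uses the identification $A/\m \cong P/\n$, which does hold, so nothing breaks.
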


\begin{proof}
Notice that $V/P \cong B/A$ as a $P$-module, since $V=B/sB$ and $P=A/sB$. We then have $$\rmr_A(B/A) = \rmr_{A/sB}(B/A)=\rmr_P(B/A) =\rmr_P(V/P)$$
as claimed.
\end{proof}

\begin{cor}\label{3} If $\fkc = (a_1, a_2, \ldots, a_n)V$ for some $a_1, a_2, \ldots, a_n \in P$ and $\rmr_P(V/P)=1$, then $\calR_A(Q^d)$ is a Gorenstein ring, where $Q=(a_1, a_2, \ldots, a_n, s)$.
\end{cor}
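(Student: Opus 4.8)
The plan is to verify the hypotheses of Theorem~\ref{GI} for the $\m$-primary ideal $I=\fka$ and then quote that theorem directly. First I would record the structural facts already available in this subsection together with one new observation: $A$ is an integral domain, being a subring of the formal power series ring $B$, so $\Ass A=\Min A=\Assh A=\{(0)\}$; moreover $A$ is a Noetherian complete local ring with $\dim A=d=n+1\ge 2$, and $\depth A=1$ by Lemma~\ref{prop1}. Thus $A$ meets the standing hypotheses of Theorem~\ref{GI}. Lemma~\ref{prop1} furthermore tells us that $\fka=\fkc+sB$ is an $\m$-primary ideal of $A$ with $B=\fka:\fka$, and $A:B=\fka$ holds by the definition of $\fka$; hence we may take $I=\fka$ in Theorem~\ref{GI}.

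Next I would check Conditions (1)--(3) of Theorem~\ref{GI}. Condition~(1) is immediate: $B=k[[t_1,\ldots,t_n,s]]$ is a regular local ring, hence Gorenstein. For Condition~(2), the proposition preceding this corollary gives $\rmr_A(B/A)=\rmr_P(V/P)$, which equals $1$ by assumption; and $A\ne B$, since $P\subsetneq V$ (otherwise $\fkc=V:V=V$, contradicting $\sqrt{\fkc}=\n$) while $V\cap sB=(0)$ forces $A=P+sB\subsetneq V+sB=B$.

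The one genuine computation is Condition~(3): I claim that, as an ideal of $B$, $\fka=(a_1,\ldots,a_n,s)B$; since $n+1=d$ and each $a_i\in\fkc\subseteq\n\subseteq\m$ while $s\in\m$, this exhibits $\fka$ as generated by $d$ elements of $\m$. Using $\fka=\fkc+sB$ and $\fkc=(a_1,\ldots,a_n)V$, the inclusion $(a_1,\ldots,a_n,s)B\subseteq\fka$ is clear. For the reverse inclusion, write $B=V+sB$ (valid since $B=V[[s]]$): an element of $(a_1,\ldots,a_n,s)B$ has the form $\sum_{i=1}^{n}a_ib_i+sb$ with $b_i,b\in B$, and writing $b_i=v_i+sb_i'$ with $v_i\in V$ and $b_i'\in B$ rewrites it as $\sum_{i=1}^{n}a_iv_i+s\bigl(\sum_{i=1}^{n}a_ib_i'+b\bigr)\in\fkc+sB=\fka$. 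Hence $\fka=(a_1,\ldots,a_n,s)B$.

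With Conditions (1)--(3) in force, Theorem~\ref{GI} yields $B=\fka:\fka$ (consistently with Lemma~\ref{prop1}) and that $\calR_A(Q^d)$ is a Gorenstein ring for $Q=(a_1,\ldots,a_n,s)A$, which is precisely the assertion. I expect the only step requiring any thought to be the verification of Condition~(3) above; everything else is assembled from Lemma~\ref{prop1}, the preceding proposition, and the regularity of $B$.
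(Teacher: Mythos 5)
Your proposal is correct and follows the paper's own route: the paper likewise deduces $\fka=(a_1,\ldots,a_n,s)B$ from Lemma~\ref{prop1} and then invokes Theorem~\ref{GI}, merely leaving implicit the verifications (Gorensteinness of $B$, $A\ne B$, $\rmr_A(B/A)=\rmr_P(V/P)=1$) that you spell out. Your explicit check of Condition (3) via $B=V+sB$ is exactly the computation the paper compresses into one line.
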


\begin{proof}
We get $\fka = (a_1, a_2, \ldots, a_n, s)B$ by Lemma \ref{prop1}. Hence, the assertion follows from Theorem \ref{GI}.
\end{proof}

\begin{thm}
For each one of the following cases, we have $B= \fka : \fka$ and the assumptions of Corollary $\ref{3}$ are satisfied. Therefore, $\calR_A(Q^d)$ is a Gorenstein ring for the appropriate parameter ideal $Q$ of $A$.
\begin{enumerate}[$(1)$]
\item $n=1$ and $P = k[[H]]$, where $H$ is a symmetric numerical semigroup such that $1 \not\in H$.
\item $n=1$ and $P=k[[t^2+ t^3, t^4, t^6]]$, where $t=t_1$. 
\item Let $k/k_0$ be an extension of fields with $[k:k_0]=2$. Choose $\alpha \in k \setminus k_0$ and set $P=k_0[[t_1, t_2, \ldots, t_n, \alpha t_1, \alpha t_2, \ldots, \alpha t_n]]$ in $V=k[[t_1, t_2, \ldots, t_n]]$.
\end{enumerate}
Notice that for Case $(2)$, $P$ is not the semigroup ring for any numerical semigroup.
\end{thm}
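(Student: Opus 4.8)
The plan is to deduce all three cases from Corollary~\ref{3}, by verifying in each case the running hypotheses imposed on $P\subseteq V$ in the present subsection together with the two supplementary hypotheses of that corollary. The running hypotheses hold in every case: $P$ is a homomorphic image of a power series ring over a field, hence a complete Noetherian local ring; $V$ is module-finite over $P$ (generated over $P$ by $1,t,\dots,t^{c-1}$ when $n=1$ and $t^{c}\in P$, and by $1$ and $\alpha$ in Case~(3)); $\fkc=P:V\neq(0)$; and $P/\fkc$ has finite length over $P$, so $\sqrt{\fkc}=\n$. Granting this, Lemma~\ref{prop1} already gives $\fka=\fkc+sB$, $B=\fka:\fka$ and $\depth A=1$, so it remains in each case to produce $a_{1},\dots,a_{n}\in P$ with $\fkc=(a_{1},\dots,a_{n})V$ and to prove $\rmr_P(V/P)=1$; then Corollary~\ref{3} gives that $\calR_A(Q^{d})$ is a Gorenstein ring for $Q=(a_{1},\dots,a_{n},s)$.

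For Case~(1), let $g$ be the Frobenius number of the symmetric numerical semigroup $H$ and $c=g+1$ its conductor. Since $1\notin H$ we have $P\neq V$; as $t^{m}\in P$ for every $m\ge c$ while $t^{c-1}=t^{g}\notin P$, the conductor is $\fkc=P:V=t^{c}V$, which is principal, generated by $t^{c}\in P$. The heart of the case is the type. Here $V/P$ has $k$-basis $\{\,\overline{t^{i}}\mid i\notin H\,\}$, and I claim $\soc_P(V/P)=k\,\overline{t^{g}}$. On one hand $g+h\ge c$, hence $g+h\in H$, for every $h\in H$ with $h>0$, so $\overline{t^{g}}$ lies in the socle. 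On the other hand, if $i\notin H$ with $i<g$, then the symmetry of $H$ gives $g-i\in H$ and $g-i>0$, so $t^{g-i}\,t^{i}=t^{g}\notin P$, which shows that $\overline{t^{i}}$ is not in the socle. Hence $\rmr_P(V/P)=1$.

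For Case~(2), assume $\operatorname{char}k\neq2$ (in characteristic $2$ one argues similarly, the conductor being $t^{6}V$, generated by $w$). Writing $u=t^{2}+t^{3}$, $v=t^{4}$, $w=t^{6}$, the relations $u^{2}=v+2t^{5}+w$ and $uv=w+t^{7}$ give $t^{5},t^{7}\in P$, hence $t^{m}\in P$ for every $m\ge4$ (for $m\ge8$, $t^{m}=t^{4}\,t^{m-4}$); since moreover $v$, $w$ and $u^{2}$ all lie in $t^{4}V$, every element of $P$ is a $k$-combination of $1$ and $u$ modulo $t^{4}V$, so $P=k\oplus ku\oplus t^{4}V$. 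In particular $t^{2},t^{3}\notin P$ and $\fkc=P:V=t^{4}V$ is principal, generated by $t^{4}\in P$. Now $V/P$ is spanned over $k$ by $\overline{t}$ and $\overline{t^{2}}$ (with $\overline{t^{3}}=-\overline{t^{2}}$), and $\n=ku\oplus t^{4}V$ acts on it through $\n/\fkc=k\,\overline{u}$; since $u\,\overline{t}=\overline{t^{3}}=-\overline{t^{2}}\neq0$ while $u\,\overline{t^{2}}=\overline{t^{4}+t^{5}}=0$, we obtain $\soc_P(V/P)=k\,\overline{t^{2}}$, so $\rmr_P(V/P)=1$. Finally $P$ is not a numerical semigroup ring, since any such ring is spanned by powers of $t$ while $u\in P$ but $t^{2}\notin P$.

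For Case~(3), every monomial $t^{\gamma}$ in $t_{1},\dots,t_{n}$ of degree $\ge1$ factors as $t_{i}\,t^{\gamma'}$ into monomials of strictly lower degree, and $\lambda t_{i}=a\,t_{i}+b\,(\alpha t_{i})\in P$ for each $\lambda=a+b\alpha\in k$; hence $\lambda t^{\gamma}\in P$ for all $\lambda\in k$, i.e.\ $(t_{1},\dots,t_{n})V\subseteq P$. As the constant term of every element of $P$ lies in $k_0$, this forces $P=k_0+(t_{1},\dots,t_{n})V$; in particular $(t_{1},\dots,t_{n})V$ is the maximal ideal $\n$ of $P$, the conductor $\fkc=P:V$ equals $(t_{1},\dots,t_{n})V$ and is generated by the $n$ elements $t_{i}\in P$, and $V/P\cong k/k_0$ is one-dimensional over $P/\n=k_0$ and annihilated by $\n$, so $\soc_P(V/P)=V/P$ has length $1$ and $\rmr_P(V/P)=1$. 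In all three cases the hypotheses of Corollary~\ref{3} are now verified, which gives $\calR_A(Q^{d})$ Gorenstein and $B=\fka:\fka$. The only genuinely computational steps are the identification of $\fkc$ and of $\soc_P(V/P)$ in Case~(2) --- where one must track the cancellations among powers of $t$ that are exactly what makes $P$ fail to be a semigroup ring --- and the socle computation in Case~(1), which relies on the symmetry of $H$; everything else is a routine application of Sections~\ref{main}--\ref{powers}.
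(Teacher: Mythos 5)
Your proposal is correct, and its skeleton --- verify the standing hypotheses on $P\subseteq V$, invoke Lemma \ref{prop1} to get $\fka=\fkc+sB$, $B=\fka:\fka$ and $\depth A=1$, then check the two hypotheses of Corollary \ref{3} --- is exactly the paper's. Where you genuinely diverge is in how $\rmr_P(V/P)=1$ is proved in cases (1) and (2). The paper shows in each case that $P$ itself is a one-dimensional Gorenstein ring ($k[[H]]$ with $H$ symmetric in case (1); in case (2) it identifies $P$ as the two-generated, hence hypersurface, ring $k[[t^2+t^3,t^4]]$ or $k[[t^2+t^3,t^5]]$ according to the characteristic) and then concludes at once because $V/P$ is a nonzero $P$-submodule of $\rmH^1_\n(P)$, whose socle is one-dimensional when $P$ is Gorenstein; in particular the paper never needs the precise conductor exponent (it only records $\fkc=t^cV$ for some $c$). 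You instead compute $\soc_P(V/P)$ by hand: in (1) via the symmetry of $H$, obtaining $\soc_P(V/P)=k\,\overline{t^{g}}$, and in (2) via the explicit decomposition $P=k\oplus ku\oplus t^4V$, which also pins down $\fkc=t^4V$ (resp.\ $t^6V$). Both routes are valid; the paper's is shorter and makes the conceptual point that type one here is precisely the Gorensteinness of $P$, while yours is more elementary and self-contained. Two small remarks on your version: the characteristic-two subcase of (2) is only asserted (``one argues similarly''), though the claims $\fkc=t^6V$ and $\soc_P(V/P)=k\,\overline{t^{5}}$ do check out; and in (1) you should add the (easy) observation that the $P$-action on $V/P$ is monomial with respect to the basis $\{\overline{t^{i}}\mid i\notin H\}$, so that exhibiting, for each gap $i\ne g$, some $t^{h}$ with $h\in H$, $h>0$ and $h+i\notin H$ really does force the socle of an arbitrary element to be supported only on $\overline{t^{g}}$.
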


\begin{proof}
See Lemma \ref{prop1} for the equality $B =\fka : \fka$.

(1)~We have $\fkc = t^c V$ $(t=t_1)$ for some $c >0$, while $\rmr_P(V/P)=1$, because $P$ is a Gorenstein ring and $V/P$ is a $P$-submodule of $\rmH_\n^1(P)$.

(2)~Since $\n V=t^2V$, we have $V=P+tP$. First, suppose $\rm{ch}(k)=2$. Then, $(t^2+t^3)^2= t^4+t^6$, so that $P=k[[t^2+t^3, t^4]]$ and $t^7 \in P$. Therefore, $V=\overline{P}$, and $P$ is a Gorenstein ring. Suppose that $\rm{ch}(k) \ne 2$. We then have $t^5 \in P$ because $(t^2+t^3)^2 \in P$, so that $k[[t^2+t^3, t^5]] \subseteq P$. Hence $V =\overline{P}$. Because $$\left<2,5 \right> \subseteq v(k[[t^2+t^3,t^5]]) \subseteq v(P)$$ and $3 \not\in v(P)$, we have $\left<2,5 \right>=v(k[[t^2+t^3,t^5]])=v(P)$, whence $k[[t^2+t^3,t^5]]=P$. In any case, $P$ is a Gorenstein ring but not the semigroup ring for any numerical semigroup, since $t^3 \not\in P$. We have $\fkc= t^cV$ for some $c \ge 2$, since $t^{c_0}V \subseteq P$ for all $c_0 \gg 0$.

(3)~We have $V = P + \alpha P$ and $\n V = \n$, since $\alpha^2 \in k=k_0+k_0\alpha$. Consequently, $\fkc = \n$ since $P \ne V$, so that $\fkc = (t_1, t_2, \ldots, t_n)V$, while $\rmr_P(V/P)=1$, since $V/P \cong P/\n$ as a $P$-module.
\end{proof}

\subsection{The case where $A$ is a fiber product}

Let $S$ be a Gorenstein complete local ring with $d=\dim S \ge 2$ and let $a_1, a_2, \ldots, a_d$ be a system of parameters of $S$. Let $\fkq=(a_1, a_2, \ldots, a_d)$, $T=S/\fkq$, and $B = S \times S$. We consider the fiber product $A= S \times_{T} S$. By definition $$A = \{(x,y) \in B \mid x \equiv y~\mod~\fkq \}$$
and there is the exact sequence
$$
(\rmE)\ \ \ \ \ \ 0 \to A \overset{\iota}{\to} B \overset{\varphi}{\to} T \to 0
$$
of $A$-modules, where $\iota : A \to B$ denotes the embedding and $\varphi: B \to T,\  \varphi(x,y) = x-y ~\mod~\fkq$. Because $A$ is a subring of $B$ and $B$ is a finitely generated $A$-module, $A$ is a Noetherian complete local ring with $\dim A = d$, while $\depth A = 1$, since $\dim T = 0$. Let $\alpha_i=(a_i, a_i ) \in A$ for each $1 \le i \le d$ and set $Q = (\alpha_1, \alpha_2, \ldots, \alpha_d)$. Then, $Q$ is a parameter ideal of $A$ and we have the following.

\begin{thm}\label{4}
The Rees algebra $\calR_A(Q^d)$ is a Gorenstein ring.
\end{thm}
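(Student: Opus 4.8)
The plan is to deduce the theorem from Theorem~\ref{GI}, applied with $I=\fka:=A:B$. First I would collect the hypotheses of that theorem that are not yet recorded. That $A$ is a Noetherian complete local ring with $\dim A=d\ge 2$ is already in the text, and $\depth A=1$ follows from the exact sequence $(\rmE)$: since $\depth_AB=d$ and $\depth_AT=0$ (as $T$ is a nonzero Artinian ring), the depth lemma forces $\depth A=1$. For $\Min A=\Assh A$, note that $S$ is Gorenstein and complete, hence equidimensional of dimension $d$, so $B=S\times S$ is equidimensional of dimension $d$ as well; because $B$ is module-finite over $A$ with $A\subseteq B$, we get $\Ass A\subseteq\Ass_AB=\{\fkP\cap A\mid \fkP\in\Min B\}\subseteq\Assh A$, and since $\Assh A\subseteq\Ass A$ always, we obtain $\Ass A=\Assh A$, whence $\Min A=\Assh A$.

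Next I would identify the conductor. Using $A=\{(x,y)\in B\mid x-y\in\fkq\}$ and testing with the pairs $(1,0)$ and $(0,1)$, a direct computation gives $\fka=A:B=\fkq\times\fkq$. Hence $\fka=(\alpha_1,\alpha_2,\ldots,\alpha_d)B$ with $\alpha_i=(a_i,a_i)\in\m$, which is precisely Condition~$(3)$ of Theorem~\ref{GI}; moreover $\sqrt{\fka}=\n\times\n$ is the Jacobson radical of $B$, so $\fka$ is $\m$-primary and $Q=(\alpha_1,\ldots,\alpha_d)$ is genuinely a parameter ideal of $A$. Since $B$ is Cohen--Macaulay, hence satisfies $(S_2)$, and $\fka$ is an ideal of $B$ contained in $A$ with $\height_A\fka=d\ge2$, Lemma~\ref{lemma2} (applicable because $A$ is complete and equidimensional, hence quasi-unmixed) shows that $\fka$ is a trace ideal in $A$ and $B=\fka:\fka$. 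Thus $I:=\fka$ is an $\m$-primary ideal of $A$ with $B=I:I$ and $A:B=\fka$, so the running hypotheses of Theorem~\ref{GI} are satisfied.

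It remains to check Conditions~$(1)$ and~$(2)$ of Theorem~\ref{GI}. Condition~$(1)$ holds because a finite product of Gorenstein local rings is Gorenstein, so $B=S\times S$ is a Gorenstein ring. For Condition~$(2)$, the exact sequence $(\rmE)$ yields an isomorphism $B/A\cong T$ of $A$-modules; since $a_1,\ldots,a_d$ is a regular sequence on the Gorenstein ring $S$, the quotient $T=S/\fkq$ is a nonzero Artinian Gorenstein ring, so $A\ne B$, and because $\m$ acts on $B/A\cong T$ through the maximal ideal of $T$ we get $\rmr_A(B/A)=\dim_{A/\m}\soc(T)=1$. With all three conditions verified, Theorem~\ref{GI} gives that $B=\fka:\fka$ and that $\calR_A(Q^d)$ is a Gorenstein ring.

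Most of the work here is routine bookkeeping with the module-finite extension $A\subseteq B$ and the sequence $(\rmE)$. The step that deserves the most care is the conductor computation $A:B=\fkq\times\fkq$ together with the attendant verification that this $B$-ideal is exactly $(\alpha_1,\ldots,\alpha_d)B$ with $\alpha_i\in\m$ and that $Q=(\alpha_1,\ldots,\alpha_d)$ is a bona fide parameter ideal of $A$, since this is what pins down the parameter ideal appearing in the conclusion of Theorem~\ref{GI}.
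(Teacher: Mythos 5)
Your proposal is correct and follows essentially the same route as the paper: identify the conductor $A:B=\fkq\times\fkq=QB$ from the exact sequence $(\rmE)$, note that $B=S\times S$ is Gorenstein and $\rmr_A(B/A)=\rmr(T)=1$, obtain $B=QB:QB$ from Lemma~\ref{lemma2}, and conclude by Theorem~\ref{GI}. The only difference is that you spell out the verification of the standing hypotheses of Theorem~\ref{GI} ($\depth A=1$, $\Min A=\Assh A$, $\fka$ being $\m$-primary), which the paper leaves to its preceding discussion.
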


\begin{proof}
The exact sequence $(\rmE)$ shows $\rmr_A(B/A)=\rmr(T)=1$. The ring $B$ is Gorenstein with $A:B= \Ann_AT= QB$, so that $B = QB:QB$ by Lemma \ref{lemma2}. Hence, the assertion follows from Theorem \ref{GI}%\ref{1}.
\end{proof}

\begin{cor}
Let $U=k[[X_1, X_2, \ldots, X_d, Y_1, Y_2, \ldots, Y_d]]$ $(d \ge 2)$ be the formal power series ring over a field $k$ and set $A=U/[(X_1, X_2, \ldots, X_d) \cap (Y_1, Y_2, \ldots, Y_d)]$. Let $z_i$ denote, for each $1 \le i \le d$, the image of $X_i + Y_i$ in $A$. Then, $\calR_A(Q^d)$ is a Gorenstein ring, where $Q=(z_1, z_2, \ldots, z_d)$.
\end{cor}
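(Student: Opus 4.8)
The plan is to exhibit $A$ as a fiber product of the type treated in Theorem~\ref{4} and then simply invoke that theorem. Put $S=k[[X_1,\ldots,X_d]]$, a regular (hence Gorenstein) complete local ring with $\dim S=d\ge 2$, and choose the system of parameters $a_i=X_i$ $(1\le i\le d)$, so that $\fkq=(a_1,\ldots,a_d)$ is the maximal ideal of $S$ and $T=S/\fkq=k$. Write the second copy of $S$ with variables $Y_1,\ldots,Y_d$ and set $B=S\times S$. Theorem~\ref{4} asserts that $\calR_A(Q^d)$ is Gorenstein for $A=S\times_{T}S$ and $Q=(\alpha_1,\ldots,\alpha_d)$ with $\alpha_i=(a_i,a_i)$; hence it is enough to identify the ring and ideal of the statement with exactly this $A$ and $Q$.

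For the identification I would consider the continuous $k$-algebra homomorphism $\phi\colon U\to k[[X_1,\ldots,X_d]]\times k[[Y_1,\ldots,Y_d]]$ given by $\phi(f)=\bigl(f(X_1,\ldots,X_d,0,\ldots,0),\,f(0,\ldots,0,Y_1,\ldots,Y_d)\bigr)$. An element $f$ lies in $\Ker\phi$ precisely when $f(X,0)=0$ and $f(0,Y)=0$, i.e. when $f\in(X_1,\ldots,X_d)\cap(Y_1,\ldots,Y_d)$; and a pair $(g,h)$ lies in $\Im\phi$ precisely when $g(0)=h(0)$, i.e. when $g$ and $h$ are congruent modulo $\fkq$ on the two factors. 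Therefore $\phi$ induces an isomorphism $A\cong S\times_{T}S$, under which the image $z_i$ of $X_i+Y_i$ is carried to $(X_i,Y_i)$; identifying the $Y$-copy of $S$ with the $X$-copy via $Y_i\leftrightarrow X_i$ turns this into $(a_i,a_i)=\alpha_i$. Consequently $Q=(z_1,\ldots,z_d)$ corresponds to $(\alpha_1,\ldots,\alpha_d)$.

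With these identifications in place, Theorem~\ref{4} applies directly and yields that $\calR_A(Q^d)$ is a Gorenstein ring. The only thing needing verification is the two elementary ideal computations above — namely $\Ker\phi=(X_1,\ldots,X_d)\cap(Y_1,\ldots,Y_d)$ and the description of $\Im\phi$ — together with the bookkeeping that matches the $z_i$ with the $\alpha_i$; there is no real obstacle here, the corollary being a direct specialization of Theorem~\ref{4}.
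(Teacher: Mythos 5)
Your proposal is correct and follows exactly the paper's own route: the paper likewise identifies $A$ with the fiber product $S\times_k S$ for $S=k[[X_1,\ldots,X_d]]$ and $k=S/(X_1,\ldots,X_d)$ and then invokes Theorem \ref{4}. The explicit map $\phi$ and the verification that $z_i$ corresponds to $\alpha_i=(a_i,a_i)$ are just the details the paper leaves implicit, and they check out.
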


\begin{proof}
Since $A \cong S \times_k S$ where $S=k[[X_1, X_2, \ldots, X_d]]$ and $k = S/(X_1, X_2, \ldots, X_d)$, the result readily follows from Theorem \ref{4}.
\end{proof}

\begin{remark}
More generally, let $t \in S$ and let $A = S \overset{t}{\ltimes} \fkq$ denote the amalgamated duplication of $S$ along $\fkq$ (\cite{AF}; see \cite[Section 3]{EGI} also). Let $Q= (\alpha_1, \alpha_2, \ldots, \alpha_d)$, where $\alpha_i =(a_i, 0) \in A$. Then, for every $t \in S$, $Q$ is a parameter ideal of $A$, and $\calR_A(Q^d)$ is a Gorenstein ring. Let us emphasize that if $t=1$, then $A=S \times_{T} S$, which is the fiber product, and if $t=0$, then $A=S \ltimes \fkq$, which is the idealization of $\fkq$ over $S$.
\end{remark}

\section{More constructions}
In this section we leave the problem on the Gorensteinness in Rees algebras. Instead, we are interested in finding what values the number $t = \depth A$ can take, when $I:I$ is a Gorenstein ring.

First of all, let $R$ be an arbitrary commutative ring, and let $\{I_i\}_{1 \le i \le \ell}$  be a family of ideals of $R$, where $\ell \ge 2$. We set $J_i = I_1 \cap \cdots \overset{\vee}{I_i} \cap \cdots \cap I_\ell$ for each $1 \le i \le \ell$. We then have the following.

\begin{lem}\label{3.1}  $\bigcap_{i=1}^\ell (I_i + J_i) = \sum_{i=1}^\ell J_i$.
\end{lem}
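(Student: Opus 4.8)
The plan is to establish the two inclusions of $\bigcap_{i=1}^\ell (I_i+J_i)=\sum_{i=1}^\ell J_i$ separately by direct element manipulation; neither induction on $\ell$ nor any Noetherian or other ring-theoretic hypothesis is needed, so the identity holds for an arbitrary commutative ring exactly as stated.

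For the inclusion $\sum_{i=1}^\ell J_i\subseteq\bigcap_{i=1}^\ell (I_i+J_i)$ I would fix an index $j$ and use the trivial but decisive observation that, whenever $i\neq j$, the ideal $I_j$ is one of the factors in the intersection $J_i=I_1\cap\cdots\overset{\vee}{I_i}\cap\cdots\cap I_\ell$, hence $J_i\subseteq I_j$. Writing $\sum_{i=1}^\ell J_i=J_j+\sum_{i\neq j}J_i$ then displays this sum inside $J_j+I_j=I_j+J_j$; since $j$ is arbitrary, the sum lies in the intersection.

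For the reverse inclusion I would take $x\in\bigcap_{i=1}^\ell (I_i+J_i)$ and, for each $i$, fix a decomposition $x=p_i+q_i$ with $p_i\in I_i$ and $q_i\in J_i$. The key step is to introduce the single element $q:=\sum_{i=1}^\ell q_i$, which obviously lies in $\sum_{i=1}^\ell J_i$, and to check that $x-q$ lies in every $I_i$: fixing $i$, one has $x-q=p_i-\sum_{j\neq i}q_j$, where $p_i\in I_i$ and, for each $j\neq i$, $q_j\in J_j\subseteq I_i$ because $I_i$ appears as a factor of the intersection defining $J_j$. Thus $x-q\in\bigcap_{i=1}^\ell I_i$, and since $\bigcap_{i=1}^\ell I_i\subseteq\bigcap_{i\neq 1}I_i=J_1$ (an intersection over a larger index set, using $\ell\ge 2$), we conclude $x=(x-q)+q\in J_1+\sum_{i=1}^\ell J_i=\sum_{i=1}^\ell J_i$.

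The only point requiring care --- and the nearest thing to an obstacle --- is the index bookkeeping: both directions rest on the same pair of facts, that $i\neq j$ forces $I_j$ to appear as a factor of $J_i$ and, symmetrically, $I_i$ to appear as a factor of $J_j$. Once this is kept straight the argument is purely formal, with no convergence, finiteness, or structural subtleties involved.
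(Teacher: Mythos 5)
Your proof is correct. Both inclusions check out: the containment $\sum_{i=1}^\ell J_i\subseteq\bigcap_{j=1}^\ell (I_j+J_j)$ follows exactly as you say from $J_i\subseteq I_j$ for $i\neq j$, and in the reverse direction the element $q=\sum_{i=1}^\ell q_i$ does the job, since $x-q=p_i-\sum_{j\neq i}q_j\in I_i$ for every $i$, whence $x-q\in\bigcap_{i=1}^\ell I_i\subseteq J_1$ and $x\in\sum_{i=1}^\ell J_i$. The one place a reader might pause --- whether $q_i$ itself should be subtracted --- you handle correctly, because $q_i\in J_i$ cancels against $q$ leaving only terms $q_j$ with $j\neq i$, each of which lies in $I_i$.

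Your route does differ from the paper's, though only because the paper gives no argument at all beyond the remark that the identity ``follows, for example, by induction on $\ell$,'' with details left to the reader. An inductive proof would verify the trivial case $\ell=2$ (where $J_1=I_2$ and $J_2=I_1$) and then need some care in the inductive step, since the ideals $J_i$ change meaning when an ideal is added to or removed from the family; your argument sidesteps that bookkeeping entirely by producing, for a given $x$ in the intersection, a single explicit witness $q$ of its membership in the sum, uniformly in $\ell$. This is arguably the cleaner and more self-contained option, it uses nothing beyond the two symmetric facts you isolate ($I_j$ is a factor of $J_i$ and $I_i$ is a factor of $J_j$ whenever $i\neq j$), and it is valid over an arbitrary commutative ring, exactly the generality in which the lemma is stated and later applied in Proposition \ref{3.2}.
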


\begin{proof}
This follows, for example, by induction on $\ell$. The detail is left to the reader.
\end{proof}

Let $S = \bigoplus_{i=1}^\ell R/I_i$. Assume that $\bigcap_{i=1}^\ell I_i= (0)$ and set $I = R : S$.

\begin{prop}\label{3.2}
The following assertions hold true.
\begin{enumerate}[$(1)$]
\item $I = \sum_{i=1}^\ell J_i$.
\item $I \subseteq I_i + I_j$, if $i \ne j$.
\item $\rmV(I) = \bigcup_{i \ne j} \rmV(I_i + I_j)$ in $\Spec R$.
\end{enumerate}
\end{prop}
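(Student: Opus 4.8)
The three assertions are intertwined, and I would prove them in the order (1), (2), (3). For (1), the key is to identify the conductor $I = R :_R S$ concretely. Since $S = \bigoplus_{i=1}^\ell R/I_i$ and the embedding $R \hookrightarrow S$ is the diagonal (this uses $\bigcap_i I_i = (0)$ to see it is injective), an element $r \in R$ satisfies $rS \subseteq R$ precisely when, for each $i$, the element $r \cdot \overline{1} \in R/I_i$ lifts back into (the diagonal copy of) $R$ inside $S$. Concretely, $R :_R S = \bigcap_{i=1}^\ell \left(R :_R R/I_i\right)$, and one checks that $R :_R (R/I_i)$, computed inside $S$, is exactly $I_i + J_i$: multiplying the $i$-th summand $\overline{1}$ into $R$ forces the component to vanish modulo $I_i$, while the diagonal constraint pins down the other coordinates, and unwinding this gives $I_i + J_i$. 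Thus $I = \bigcap_{i=1}^\ell (I_i + J_i)$, and Lemma \ref{3.1} converts this to $\sum_{i=1}^\ell J_i$. I expect the bookkeeping of "what it means to multiply an idempotent-like element of $S$ back into the diagonal" to be the one place requiring care; once the identification $R:_R(R/I_i) = I_i + J_i$ is in hand, the rest is immediate from Lemma \ref{3.1}.

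For (2), fix $i \ne j$. By (1) it suffices to show $J_k \subseteq I_i + I_j$ for every $k$. If $k \notin \{i,j\}$, then $J_k = I_1 \cap \cdots \overset{\vee}{I_k} \cdots \cap I_\ell \subseteq I_i \cap I_j \subseteq I_i + I_j$, since both $I_i$ and $I_j$ appear in the intersection defining $J_k$. If $k = i$, then $J_i$ contains $I_j$ as one of its intersectands, so $J_i \subseteq I_j \subseteq I_i + I_j$; symmetrically for $k = j$. Summing over $k$ gives $I = \sum_k J_k \subseteq I_i + I_j$.

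For (3), we pass to $\Spec R$. By (2) we have $I \subseteq I_i + I_j$ for all $i \ne j$, hence $\rmV(I_i + I_j) \subseteq \rmV(I)$ for all such pairs, giving the inclusion $\bigcup_{i \ne j}\rmV(I_i + I_j) \subseteq \rmV(I)$. For the reverse inclusion, take $\fkp \in \rmV(I)$, i.e.\ $I = \sum_k J_k \subseteq \fkp$, so $J_k \subseteq \fkp$ for every $k$. Each $J_k$ is a finite intersection $\bigcap_{m \ne k} I_m$; since $\fkp$ is prime and $\prod_{m\ne k} I_m \subseteq \bigcap_{m \ne k} I_m = J_k \subseteq \fkp$, there is an index $m(k) \ne k$ with $I_{m(k)} \subseteq \fkp$. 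Now I must produce two distinct indices $i, j$ with $I_i, I_j \subseteq \fkp$: apply the previous observation to two different values of $k$, say $k=1$ giving $I_{m(1)} \subseteq \fkp$ with $m(1) \ne 1$, and then to $k = m(1)$ giving $I_{m(m(1))} \subseteq \fkp$ with $m(m(1)) \ne m(1)$; the two indices $m(1)$ and $m(m(1))$ are distinct (this is where $\ell \ge 2$ is used, and the argument genuinely needs $\ell \ge 2$ so that such a second index exists), hence $\fkp \supseteq I_{m(1)} + I_{m(m(1))}$, i.e.\ $\fkp \in \rmV(I_i + I_j)$ for that pair. The main obstacle here is precisely this last step — extracting a genuine pair of distinct indices rather than a single one — and it is handled by the "iterate the prime-avoidance argument" device just described.
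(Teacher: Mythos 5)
Your proof is correct and follows essentially the same route as the paper: part (1) is obtained by identifying $R:_RS$ with $\bigcap_{i=1}^\ell(I_i+J_i)$ via the condition that each $a\mathbf{e}_i$ lie in the diagonal copy of $R$, and then invoking Lemma \ref{3.1}, exactly as in the paper. Parts (2) and (3), which the paper dismisses as straightforward, are supplied by you with correct arguments (in (3), the device of applying primeness of $\fkp$ twice to extract two distinct indices is exactly what is needed).
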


\begin{proof}
(1) We set $\mathbf{e}_i = (0, \cdots, \overset{\overset{i}{\vee}}{1}, \cdots, 0) \in S$ for each $1 \le i \le \ell$. Let $a \in R$ and we have 
\begin{eqnarray*}
a \mathbf{e}_i \in R \ \text{for~all}~1 \le i \le \ell &\Leftrightarrow& ~\text{for~each}~1 \le i \le \ell \  \text{there~exists}~f \in R \ \text{such~that}\\
&{}&(0, \cdots, \overset{i}{\overline{a}}, \cdots, 0)=(\overline{f},\overline{f}, \cdots, \overline{f})~\text{in}~S\\ 
&\Leftrightarrow&~\text{for~each}~1 \le i \le \ell~\text{there~exists}~f \in J_i ~\text{such~that}~a -f \in I_i \\
&\Leftrightarrow& a \in \bigcap_{i=1}^\ell (I_i + J_i)=\sum_{i=1}^\ell J]_i,
\vspace{-2em}
\end{eqnarray*}
where $\overline{c}$ denotes, for $c \in R$ and $1 \le j \le \ell$, the image of $c$ in $R/I_j$. Hence $I = \sum_{i=1}^\ell J_i$.

(2), (3) These are straightforward.
\end{proof}

Let us consider a more specific situation. Let $k$ be a field and let $n, \ell \ge 2$ be integers. Let $T=k[[X_1, X_2, \cdots, X_n]]$ denote the formal power series ring over $k$. Let $F_1, F_2, \ldots, F_\ell$ be subsets of $\{ X_1, X_2, \ldots, X_n \}$ and assume that 
\begin{enumerate}[$(1)$]
\item $F_i \ne \emptyset$ for every $1 \le i \le n$ and 
\item $F_i \not\subseteq F_j$ if $i \ne j$. 
\end{enumerate}
We set $P_i=(F_i) \in \Spec T$ and $\fka = \bigcap_{i=1}^\ell P_i$. Let  $A= T/\fka$, $B = \bigoplus_{i=1}^\ell T/P_i$, and $I = A:B$. Then, $\fka = \bigcap_{i=1}^\ell P_i$ is a reduced primary decomposition of $\fka$ in $T$, and $B =\overline{A}$. Therefore, $\Ass_TA= \{P_1, P_2, \ldots, P_\ell\}$, and $\dim A = \max \{n- |F_i| \mid  1 \le i \le \ell \}$, where $|*|$ denotes the number of elements. Since $\ell \ge 2$, $A \subsetneq B$, and setting $\fkp_i=P_i/\fka$ for each $1 \le i \le n$, we have $\fkp_i + \fkp_j \in \Spec A$ for all $1 \le i,j \le \ell$. Hence, by Proposition \ref{3.2} we get the following.

\begin{cor}\label{3.3}
$\rmV(I) = \bigcup_{i\ne j}\rmV(\fkp_i + \fkp_j)$. Consequently, $\height_AI= \min \{\height_A(\fkp_i+\fkp_j) \mid  i \ne j\}$.
\end{cor}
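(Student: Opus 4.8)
The plan is to deduce Corollary \ref{3.3} directly from Proposition \ref{3.2}, which was stated for an arbitrary commutative ring $R$ and a family $\{I_i\}$ with $\bigcap_i I_i = (0)$. So the first step is to verify that the present setup fits the hypotheses of that proposition: we take $R = A = T/\fka$, $I_i = \fkp_i = P_i/\fka$, and then $\bigcap_{i=1}^\ell \fkp_i = (0)$ in $A$ since $\fka = \bigcap_{i=1}^\ell P_i$. Moreover $S = \bigoplus_{i=1}^\ell A/\fkp_i \cong \bigoplus_{i=1}^\ell T/P_i = B$, and $I = A:B$ matches the $I = R:S$ of the proposition. Thus Proposition \ref{3.2}(3) applies verbatim and yields $\rmV(I) = \bigcup_{i \ne j}\rmV(\fkp_i + \fkp_j)$ in $\Spec A$, which is the first assertion.

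For the height formula, I would argue that for any ideal $\fkb$ of a Noetherian ring one has $\height_A \fkb = \min\{\height_A \fkP \mid \fkP \in \rmV(\fkb)\}$, and that taking heights over a union of closed sets gives the minimum of the heights: $\height_A\big(\bigcup_{i\ne j}\rmV(\fkp_i+\fkp_j)\big) = \min_{i \ne j}\height_A(\fkp_i+\fkp_j)$. Combining this with the first assertion gives $\height_A I = \min\{\height_A(\fkp_i + \fkp_j) \mid i \ne j\}$. One small point worth recording here is that each $\fkp_i + \fkp_j$ is a genuine prime of $A$: since $P_i = (F_i)$ and $P_j = (F_j)$ are monomial primes generated by disjoint-or-overlapping subsets of the variables, $P_i + P_j = (F_i \cup F_j)$ is again a prime of $T$ containing $\fka$, so $\fkp_i + \fkp_j = (P_i + P_j)/\fka \in \Spec A$ — this was already noted just before the statement of the corollary, so it may simply be cited.

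I do not expect a serious obstacle; the content is essentially bookkeeping, checking that the abstract hypotheses of Proposition \ref{3.2} are met and then invoking the standard behavior of height under passing to $\rmV(-)$ and under finite unions of closed subsets of $\Spec A$. If anything needs care it is the passage $\height$ of a union equals the minimum of the heights: this is immediate because $\rmV(\fkb) \supseteq \rmV(\fkc)$ forces $\height \fkb \le \height \fkc$, so $\height I \le \height(\fkp_i+\fkp_j)$ for every pair, while conversely a minimal prime of $I$ lies in some $\rmV(\fkp_i+\fkp_j)$ hence has height $\ge \height(\fkp_i+\fkp_j) \ge \min_{i\ne j}\height(\fkp_i+\fkp_j)$. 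This closes the argument.
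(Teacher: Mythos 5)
Your proposal is correct and follows the paper's intended route exactly: the paper derives Corollary \ref{3.3} by invoking Proposition \ref{3.2}(3) in the setting $R=A$, $I_i=\fkp_i$, $S=B$, together with the observation (recorded just before the corollary) that each $\fkp_i+\fkp_j$ is prime, and your passage from the equality of vanishing loci to the height formula is the standard argument. Nothing is missing.
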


\begin{prop}
$A$ is an unmixed ring if and only if $|F_i|$ is independent of the choice of  $i$. When this is the case,  $|F_i \setminus F_j|=|F_j \setminus F_i|$ for all $i \ne j$ and $\height_AI$ = $\min \{|F_i \setminus F_j| \mid i \ne j \}.$
\end{prop}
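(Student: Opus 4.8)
The plan is to analyze the statement
\[
A \text{ is unmixed} \iff |F_i| \text{ is independent of } i,
\]
and the supplementary assertions, using the description of $\Ass_T A = \{P_1,\dots,P_\ell\}$ together with Corollary~\ref{3.3} and the dimension formula $\dim A = \max\{n-|F_i| : 1\le i\le \ell\}$.

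First I would establish the equivalence. By definition $A$ is unmixed when every associated prime has the same dimension, i.e.\ $\dim T/P_i = n - |F_i|$ is constant over $i$; since $P_i = (F_i)$ is generated by a subset of the variables, $T/P_i$ is a (regular) domain of dimension $n-|F_i|$, and the associated primes of $A$ are exactly the $\mathfrak p_i$. Thus unmixedness is literally the statement that $n-|F_i|$, equivalently $|F_i|$, does not depend on $i$. This direction is essentially immediate once one records that the given primary decomposition is reduced and irredundant (guaranteed by condition $(2)$, $F_i\not\subseteq F_j$), so that no associated prime is lost.

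Next, assuming $|F_i|=|F_j|$ for all $i,j$, I would verify $|F_i\setminus F_j| = |F_j\setminus F_i|$: this is the trivial counting identity $|F_i| - |F_i\cap F_j| = |F_j| - |F_i\cap F_j|$. For the height formula, I would use Corollary~\ref{3.3}, which gives $\height_A I = \min_{i\ne j}\height_A(\mathfrak p_i + \mathfrak p_j)$. Now $\mathfrak p_i + \mathfrak p_j$ corresponds to the ideal $(F_i) + (F_j) = (F_i\cup F_j)$ of $T$, and inside $A = T/\mathfrak a$ one has $\height_A(\mathfrak p_i+\mathfrak p_j) = \dim A - \dim T/(F_i\cup F_j) = \dim A - (n - |F_i\cup F_j|)$. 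Under the equicardinality hypothesis $\dim A = n - |F_i|$ for any $i$, so $\height_A(\mathfrak p_i+\mathfrak p_j) = |F_i\cup F_j| - |F_i| = |F_j\setminus F_i| = |F_i\setminus F_j|$, and taking the minimum over $i\ne j$ yields $\height_A I = \min\{|F_i\setminus F_j| : i\ne j\}$.

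The only genuine subtlety — the step I would be most careful about — is the passage $\height_A(\mathfrak p_i+\mathfrak p_j) = \dim A - \dim A/(\mathfrak p_i+\mathfrak p_j)$. This uses that $A$, being a quotient of the regular (hence catenary, universally catenary) local ring $T$, is catenary, and that the relevant primes sit between the bottom and top so that $\height + \coheight = \dim$ holds along a saturated chain through $\mathfrak p_i+\mathfrak p_j$; here equicardinality of the $|F_i|$ (i.e.\ unmixedness) is exactly what guarantees $\dim A = n-|F_i|$ uniformly, so there is no defect coming from a "wrong" branch. I would spell this out but expect it to be routine given $T$ regular local. The counting identities and the identification $(F_i)+(F_j)=(F_i\cup F_j)$ are immediate and I would not belabor them.
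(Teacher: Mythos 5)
Your proof is correct and follows essentially the same route as the paper's: both arguments read off unmixedness from $\dim A/\fkp_i = n-|F_i|$ and reduce the height formula to Corollary \ref{3.3} together with catenarity and equidimensionality of $A$ as a quotient of the regular local ring $T$. The only cosmetic difference is that you obtain $|F_i\setminus F_j|=|F_j\setminus F_i|$ from the counting identity $|F_i|-|F_i\cap F_j|=|F_j|-|F_i\cap F_j|$ and compute $\height_A(\fkp_i+\fkp_j)$ as $\dim A-\dim A/(\fkp_i+\fkp_j)$, whereas the paper gets both at once by evaluating $\height_A(\fkp_i+\fkp_j)=\height_{A/\fkp_i}\bigl[(\fkp_i+\fkp_j)/\fkp_i\bigr]=|F_j\setminus F_i|$ and then repeating the computation with $i$ and $j$ interchanged.
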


\begin{proof}
Since $\dim A/\fkp_i= n - |F_i|$ for every $1 \le i \le \ell$, the first assertion follows. Suppose $A$ is unmixed and let $i \ne j$. We set $Q= \fkp_i + \fkp_j$. We then have
\begin{eqnarray*}
\height_AQ&=& \height_{A/\fkp_i}Q/\fkp_i = \height_{T/P_i}[(P_i + P_j)/P_i]\\
&=&|(F_i \cup F_j) \setminus F_i|\\
&=&|F_j \setminus F_i|.%\\
%\vspace{-2em}
\end{eqnarray*}
For the same reason, $\height_AQ = \height_{A/\fkp_j}Q/\fkp_j = |F_i \setminus F_j|$, whence the second assertion follows. The last assertion is now clear.
\end{proof}

We now obtain the following.

\begin{thm}
Suppose that $|F_i|$ is independent of the choice of $i$ and that $|F_i \setminus F_j| \ge 2$ for all $i \ne j$. Let $d = \dim A$. Then the following assertions hold true.
\begin{enumerate}[$(1)$]
\item $\height_A I \ge 2$.
\item $d= n - |F_i| \ge 2$.
\item $B = I:I$, $\depth_AB=d$, and $\rmK_A \cong B$ as an $A$-module.
\item $0 < \depth A < d$.
\end{enumerate}
\end{thm}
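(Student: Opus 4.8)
The strategy is to reduce everything to the combinatorial setup and then invoke Corollary~\ref{3.3}, the preceding proposition, and Corollary~\ref{2} (or Theorem~\ref{1}). Assertions (1) and (2) are immediate: by the proposition just proved, since $|F_i|$ is independent of $i$, the ring $A$ is unmixed and $\height_A I = \min\{|F_i\setminus F_j| : i\ne j\} \ge 2$ by hypothesis, giving (1); and $d = \dim A = \max\{n-|F_i|\} = n - |F_i|$ (independent of $i$), while the condition $|F_i\setminus F_j|\ge 2$ forces $|F_i| \le n-2$ as soon as $\ell \ge 2$, since $F_j$ has $|F_i\setminus F_j|\ge 2$ elements outside $F_i$, so $d\ge 2$, giving (2).

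For (3), the point is that $A = T/\fka$ is a homomorphic image of the regular (hence Cohen–Macaulay) ring $T$, that $\Min A = \Assh A = \{\fkp_1,\dots,\fkp_\ell\}$ because all the $\dim A/\fkp_i = n-|F_i|$ agree, and that $I = A:B$ contains a non-zerodivisor with $\height_A I \ge 2$ by (1). I would then check Condition~(1) of Corollary~\ref{2}: $B = \bigoplus_i T/P_i$ is a finite product of regular rings, hence Gorenstein; $A$ is a homomorphic image of the Cohen–Macaulay ring $T$; and $\Min A = \Assh A$ as just noted. Corollary~\ref{2} then yields $B = \widetilde{A}$, $\depth_A B = d$, $A$ possesses a canonical module $\rmK_A$, $\rmK_A \cong B$ as an $A$-module, and $\fka = A:B = I$ is a trace ideal with $B = I:I$. (One should note $B = \overline{A}$ satisfies $(S_2)$ as a product of regular rings and $A$ is quasi-unmixed, so Lemma~\ref{lemma2} applies directly to give $B = I:I$ as well, which is the quickest route to that part.)

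For (4), the inequality $\depth A > 0$ holds because $A$ is reduced (its defining ideal $\fka$ is an intersection of primes), so it has positive depth; alternatively $I$ contains a non-zerodivisor. The inequality $\depth A < d$ is the substantive one: if $\depth A = d$ then $A$ would be Cohen–Macaulay, and by Corollary~\ref{prop2}(d) (applicable since all hypotheses of Corollary~\ref{2} hold) we would get $A = B$, contradicting $A \subsetneq B$, which holds because $\ell \ge 2$. So $\depth A < d$.

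\textbf{Main obstacle.} The only genuinely delicate point is verifying that the hypotheses of Corollary~\ref{2} (equivalently Theorem~\ref{1}) are met cleanly — in particular confirming $\Min A = \Assh A$ from the hypothesis that $|F_i|$ is constant, and confirming that $I$ contains a non-zerodivisor (which follows since $\height_A I \ge 2 > 0$ and $A$ is unmixed, so $I$ is not contained in any associated prime). Everything else is bookkeeping: once Corollary~\ref{2} is invoked, (3) is read off directly, and (4) follows from the $A\subsetneq B$ observation together with the Cohen–Macaulay case of Corollary~\ref{prop2}(d).
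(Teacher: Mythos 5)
Your proposal is correct and follows essentially the same route as the paper: the paper also treats (1) and (2) as immediate from the preceding proposition and then derives (3) and (4) by verifying the hypotheses of Corollary \ref{prop2} (quasi-unmixedness of the complete equidimensional ring $A$, $\height_A I \ge 2$, and Gorensteinness of $B=\bigoplus_i A/\fkp_i$), with $\depth A>0$ from $\Ass A=\Assh A$ and $\depth A<d$ from $A\subsetneq B$ via part (d). Your detour through Corollary \ref{2} and Lemma \ref{lemma2} is just a slightly more roundabout packaging of the same argument.
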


\begin{proof}
The assertions (1), (2) are clear. Notice that $B= \bigoplus_{i=1}^\ell A/\fkp_i$ and we get the exact sequence $0 \to A \to B \to C \to 0$ of $A$-modules, where $IC=(0)$. Hence, Assertions (3), (4) follow from Corollary \ref{prop2}. 
\end{proof}

In the following subsections, we shall study the question of which value among $\{1,2, \ldots, d-1 \}$ the invariant $t= \depth A$ can vary.

\subsection{The case where $\depth A = 1$}\label{buchsbaum}
Let $m,n$ be integers such that $m \ge 4$, $n \ge 6$, and $\frac{2}{3}n \ge m \ge \frac{1}{2}n + 1$. We choose $F_1, F_2, F_3$ so that
\begin{eqnarray*}
F_1&=& \{1,2,\ldots, m \}\\
F_2&=&\{n-m+1, n-m+2, \ldots, n \}\\
F_3&=&\{m+1, m+2, \ldots, n\} \cup \{1,2, \ldots, 2m-n\}
%\vspace{-2em}
\end{eqnarray*}
where for simplicity, we identify the number $i$ with the indeterminate $X_i$.
Hence, $|F_i|=m$ for all $i$, $F_i \not\subseteq F_j$ if $i \ne j$, and
\begin{center}
$|F_2 \setminus F_1|=|F_3 \setminus F_1|=n-m$, \   $|F_3 \setminus F_2|=2m-n$.
\end{center}
Therefore, $\height_AI=2m-n, \dim A= n-m$, and $\dim A/I=2n-3m$. Hence, $I=\m$ if and only if $2n=3m$, where $\m$ denotes the maximal ideal of $A$. Let $x_i$ denote the image of $X_i$ in $A$. We then have $$I = \fkp_2 \cap \fkp_3 + \fkp_1 \cap \fkp_3 + \fkp_1 \cap \fkp_2 = (x_1, \ldots, x_{2m-n})+(x_{n-m+1}, \ldots, x_m) + (x_{m+1}, \ldots,x_n),$$
so that $A/I$ is a regular local ring. Because $$I=IB=(I+\fkp_1)/\fkp_1 \oplus I/\fkp_2 \oplus I/\fkp_3$$ and $I+\fkp_1=\m$, we get $B/I = A/\m \oplus A/I \oplus A/I$,  whence $\depth_AB/I=0$ and $B/I$ is a regular ring. Since $B/A=(B/I)/(A/I)$, we have the following.

\begin{prop}\label{3.6}
$\depth_AB/A=0$ and hence $\depth A=1$.
\end{prop}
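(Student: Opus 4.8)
The plan is to read off $\depth A$ from the short exact sequence $0 \to A \to B \to B/A \to 0$, using that $B = \bigoplus_{i=1}^3 A/\fkp_i$ is a direct sum of $d$-dimensional regular local rings (hence Cohen--Macaulay of depth $d \ge 2$) and that $B/A$ is a quotient of $B/I$, which we will show has depth $0$. Since $\depth_A(B/A)=0$ forces $\depth A = 1$ by the depth lemma applied to the sequence (one cannot have $\depth A \ge 2$ when $\depth_A(B/A)=0$ and $\depth_A B \ge 2$), the whole statement reduces to the single claim $\depth_A(B/A)=0$, equivalently that the socle of $B/A$ is nonzero.

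First I would make the identifications explicit. By Proposition~\ref{3.2}(1) together with Corollary~\ref{3.3} and the displayed formula in Subsection~\ref{buchsbaum}, $I = \fkp_2\cap\fkp_3 + \fkp_1\cap\fkp_3 + \fkp_1\cap\fkp_2$, and a direct computation with the coordinate subsets $F_1,F_2,F_3$ gives $I = (x_1,\dots,x_{2m-n}) + (x_{n-m+1},\dots,x_m) + (x_{m+1},\dots,x_n)$; in particular $A/I$ is regular (the listed variables are part of a regular system of parameters of the regular local ring $T/\fka$ modulo $\fka$, and the remaining ones survive). Next, since $IB = I$ (because $I \subseteq A$ and $I$ is already an ideal of $B$ by Lemma~\ref{lemma2} — $I = A:B$) and $B = \bigoplus A/\fkp_i$, one has $B/I = \bigoplus_{i=1}^3 (A/\fkp_i)/\big((I+\fkp_i)/\fkp_i\big)$. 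Here the key arithmetic point is that $I + \fkp_1 = \m$: indeed $F_2 \cup F_3 = \{1,\dots,n\}$ covers all indeterminates, so $\fkp_2\cap\fkp_3$ already contains enough of the $x_j$ to reach $\m$ modulo $\fkp_1$ (more precisely $I + \fkp_1 \supseteq (\fkp_2\cap\fkp_3) + \fkp_1 = \fkp_1 + (x_1,\dots,x_{2m-n},x_{m+1},\dots,x_n)$... one checks this equals $\m$); whereas $I+\fkp_2$ and $I+\fkp_3$ are the expansions of $I$ to the regular rings $A/\fkp_2$, $A/\fkp_3$, each of which has positive-dimensional, nonmaximal quotient $A/I$ there. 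So $B/I \cong A/\m \oplus A/I \oplus A/I$, which has a direct summand $A/\m$ and hence $\depth_A(B/I)=0$, and moreover $B/I$ is a regular ring (product of regular local rings).

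Then I would finish via $B/A = (B/I)/(A/I)$: the inclusion $A/I \hookrightarrow B/I$ lands in the second and third summands $A/I \oplus A/I$ diagonally (the image of $1 \in A$ is $(1,1,1)$, but its class in $A/I$-coordinates is well-defined only after killing $I$, and the first coordinate dies since $I+\fkp_1 = \m$), so the quotient $B/A$ still carries the summand $A/\m$ coming from the first factor. Concretely, the composite $A/\m = (B/I)\text{-summand} \hookrightarrow B/I \twoheadrightarrow B/A$ is a split injection onto a direct summand of $B/A$, giving $\soc(B/A) \ne 0$ and thus $\depth_A(B/A)=0$. Applying the depth lemma to $0 \to A \to B \to B/A \to 0$ with $\depth_A B = d \ge 2$ and $\depth_A(B/A)=0$ yields $\depth A = \depth_A(B/A) + 1 = 1$, which is the assertion.

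The main obstacle will be the bookkeeping in verifying $I + \fkp_1 = \m$ while $I + \fkp_2$ and $I + \fkp_3$ are proper, i.e.\ tracking exactly which of the $n$ variables appear in each of the three ideals $\fkp_j \cap \fkp_k$. This is where the numerical hypotheses $\frac{2}{3}n \ge m \ge \frac{1}{2}n + 1$ (equivalently $2m - n \ge 2$ and $n - m \ge 2$, so that $\height_A I \ge 2$ and $d \ge 2$) and the precise choice of $F_1, F_2, F_3$ are used: they guarantee that $F_2 \cup F_3 = \{1,\dots,n\}$ (forcing $I + \fkp_1 = \m$) while $F_1 \cup F_2 \ne \{1,\dots,n\}$ and $F_1 \cup F_3 \ne \{1,\dots,n\}$ are not needed — rather, one only needs that $A/(I+\fkp_2)$ and $A/(I+\fkp_3)$, being isomorphic to $A/I$ which has dimension $2n - 3m \ge 0$ and is regular hence has positive depth when nonzero, do not contribute socle; actually the cleaner route is simply to note $B/A$ has $A/\m$ as a direct summand, so its socle is nonzero regardless. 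Everything else is a routine application of the depth lemma and the structure results already established.
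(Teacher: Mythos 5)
Your argument follows the paper's own route: compute $I$ explicitly, observe that $I+\fkp_1=\m$ while $\fkp_2,\fkp_3\subseteq I$, conclude $B/I\cong A/\m\oplus A/I\oplus A/I$, exhibit a copy of $A/\m$ inside $B/A=(B/I)/(A/I)$ to get $\depth_AB/A=0$, and finish with the depth lemma applied to $0\to A\to B\to B/A\to 0$ with $\depth_AB=d\ge 2$. The conclusion is right, but two of your supporting remarks are inaccurate and need repair. First, $F_2\cup F_3=\{1,\dots,2m-n\}\cup\{n-m+1,\dots,n\}$ misses $\{2m-n+1,\dots,n-m\}$ whenever $3m<2n$, so it does \emph{not} cover all indeterminates in general; what actually forces $I+\fkp_1=\m$ is that $F_2\cap F_3=\{m+1,\dots,n\}$ is the complement of $F_1$, so $I+\fkp_1\supseteq\fkp_1+(x_{m+1},\dots,x_n)=\m$ --- your ``more precise'' computation is the correct justification, the $F_2\cup F_3$ one is not. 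Second, the image of $A/I$ in $B/I$ is the twisted diagonal $\{(\overline{a},a,a)\}$, not a submodule of the last two summands: the first coordinate of the image of $1$ is $\overline{1}\ne 0$ in $A/(I+\fkp_1)=A/\m$, so it does not ``die.'' Your conclusion nevertheless stands: the composite $A/\m\hookrightarrow B/I\twoheadrightarrow B/A$ is injective, since $(u,0,0)=(\overline{a},a,a)$ forces $a=0$ in $A/I$ and then $u=0$ (and it is even split, via the retraction $(u,v,w)\mapsto u-\overline{v}$, which kills the image of $A/I$ and shows $B/A\cong A/\m\oplus A/I$). Injectivity alone already gives $\soc(B/A)\ne(0)$, hence $\depth_AB/A=0$, and the depth lemma then yields $\depth A=1$ exactly as you say.
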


For simplicity, suppose $2n=3m$. Then, writing $m=2q$ and $n = 3q$ with $q \ge 2$, we get
\begin{eqnarray*}
F_1&=&\{1,2, \ldots,2q\},\\
F_2&=&\{q+1, q+2, \ldots, 3q\},\\
F_3&=&\{2q+1, 2q+2, \ldots, 3q\} \cup \{1, 2, \ldots, q\},
%\vspace{-2em}
\end{eqnarray*}
$I = \m$, $\dim A = q$, and $B/A \cong (A/\m)^{\oplus 2}$ as an $A$-module. Therefore
$$\rmH_\m^1(A) \cong (A/\m)^{\oplus 2}\ \  \text{and}\ \ \rmH_\m^i(A) = (0)\ \ \text{for}\ \ i \ne 1, q,$$
so that $A$ is a Buchsbaum ring with $\depth A = 1$  (\cite[Proposition 2.12]{SV}).

For example, let $n =6$, $m=4$. Hence, $F_1=\{1,2,3,4\}$, $F_2=\{3,4,5,6\}$,  and $F_3=\{5,6, 1,2 \}$. Let $x_i$ denote the image of $X_i$ in $A$ and set $a= x_1+x_3+x_5$, $b=x_2+x_4+x_6$. Let $Q=(a,b)$. Then, $\m =QB$, and $a,b$ is a system of parameters of $A$.  For all $N \ge 1$ the Rees algebra $\calR_A(Q^N)$ is a Cohen-Macaulay ring of dimension $3$ (see \cite{GS2}). However, $\rmr(\calR_A(Q))=2$ and $\rmr(\calR_A(Q^N))= 2N-2$ for $N \ge 2$, so that $\calR_A(Q^N)$ is not a Gorenstein ring for any $N \ge 1$, where $\rmr(*)$ denotes the Cohen-Macaulay type. See \cite{GI} for more details.

\subsection{The case where $\depth A = 2$}
Let $\ell, m \ge 2$ be integers and set $n =\ell m$. Let $k$ be a field and let $T=k[[X_{ij} \mid 1 \le i \le \ell, 1 \le j \le m]]$ be the formal power series ring with $\ell m$ indeterminates $\{X_{ij}\}_{1 \le i \le \ell, 1 \le j \le m}$ over $k$. We set $F_i=\{X_{ij} \mid 1 \le j \le m\}$ for each $1 \le i \le \ell$. 
Hence, $\height_AI=m$ and $\dim A= m(\ell -1)$.

More concretely, let $\ell = 3$, $m \ge 2$, and consider the matrix 
$\left[\begin{smallmatrix}
X_1&X_2&\ldots&X_m\\
Y_1&Y_2&\ldots&Y_m\\
Z_1&Z_2&\ldots&Z_m\\
\end{smallmatrix}\right]
$ of indeterminates. Then, $\height_AI=m$, $\dim A = 2m$, and $\dim A/I=m$. We have 
\begin{eqnarray*}
I&=&\fkp_2 \cap \fkp_3 + \fkp_1 \cap \fkp_3 + \fkp_1 \cap \fkp_2\\
&=&(\fkp_1 + \fkp_2 \cap \fkp_3) + (\fkp_2 + \fkp_1 \cap \fkp_3) \cap (\fkp_3 + \fkp_1 \cap \fkp_2)\\
&=&(I + \fkp_1)\cap(I + \fkp_2) \cap (I +\fkp_3)\\
&=&(\fkp_1+\fkp_2)\cap(\fkp_1 + \fkp_3)\cap(\fkp_2 + \fkp_3).
\vspace{-2em}
\end{eqnarray*}
Therefore
{\footnotesize 
$$A/(I + \fkp_1) \cong T/(P_1 + P_2 \cap P_3) \cong k[[Y_1, Y_2, \ldots, Y_m, Z_1, Z_2, \ldots, Z_m]]/[(Y_1, Y_2, \ldots, Y_m) \cap (Z_1, Z_2, \ldots, Z_m)],
$$
}
so that $\depth A/(I + \fkp_1)=1$. By symmetry, $\depth A/(I + \fkp_i) =1$ for all $i$, whence $\depth_AB/I=1$, because $B/I= \bigoplus_{i=1}^3A/(I + \fkp_i)$. On the other hand, since $$I= \left[(P_1 + P_2) \cap (P_1 + P_3) \cap (P_2 + P_3) \right]/\fka$$ where $\fka = \bigcap_{i=1}^3 P_i$, our ring $A$ is exactly one of the special case $2n=3m$ of the previous subsection \ref{buchsbaum}, so that $A/I$ is a Buchsbaum ring with $\rmH_m^1(A/I) =(A/\m)^{\oplus 2}$.

We set $C = B/A$ and claim the following.

\begin{claim}\label{claim}
$\rmH_\m^1(C) \ne (0)$. Hence, $\depth_AC= 0$ or $1$, and $\depth A = 1$ or $2$.
\end{claim}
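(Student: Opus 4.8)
The plan is to establish the essential assertion $\rmH^1_\m(C) \ne (0)$ by a length count in local cohomology, and then to read off the depth statements from the long exact sequence attached to $0 \to A \to B \to C \to 0$. The first part rests on the short exact sequence
$$0 \to A/I \to B/I \to C \to 0$$
of $A$-modules, which is available because $I$ is a common ideal of $A$ and of $B$, so that $C = B/A = (B/I)/(A/I)$.

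The heart of the matter is the computation of $\rmH^1_\m(B/I)$. Recall $B/I = \bigoplus_{i=1}^3 A/(I+\fkp_i)$ and that each $A/(I+\fkp_i)$ is, up to renaming the variables, the ring $k[[Y_1,\dots,Y_m,Z_1,\dots,Z_m]]/[(Y_1,\dots,Y_m) \cap (Z_1,\dots,Z_m)]$, that is, the fiber product $R = k[[Y_1,\dots,Y_m]] \times_k k[[Z_1,\dots,Z_m]]$. I would feed the canonical exact sequence
$$0 \to R \to k[[Y_1,\dots,Y_m]] \oplus k[[Z_1,\dots,Z_m]] \to k \to 0$$
of $R$-modules into local cohomology. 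Both middle summands are regular local of dimension $m \ge 2$, hence Cohen--Macaulay, so their local cohomology vanishes in degrees $\le 1$; the long exact sequence then yields $\rmH^0_\m(R) = (0)$ and $\rmH^1_\m(R) \cong \rmH^0_\m(k) = A/\m$. Consequently $\rmH^1_\m(A/(I+\fkp_i)) \cong A/\m$ for every $i$, and $\ell_A(\rmH^1_\m(B/I)) = 3$.

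Next, in the long exact sequence of $0 \to A/I \to B/I \to C \to 0$ we have $\rmH^0_\m(B/I) = (0)$ (since $\depth_A B/I = 1$) and $\rmH^1_\m(A/I) = (A/\m)^{\oplus 2}$, of length $2$. A module of length $2$ cannot surject onto one of length $3$, so the cokernel of $\rmH^1_\m(A/I) \to \rmH^1_\m(B/I)$ is nonzero, and it embeds into $\rmH^1_\m(C)$; hence $\rmH^1_\m(C) \ne (0)$ and therefore $\depth_A C \le 1$. Finally, applying the functors $\rmH^i_\m$ to $0 \to A \to B \to C \to 0$ and using that $\depth_A B = d = 2m$ forces $\rmH^i_\m(B) = (0)$ for $i \le 2$, one obtains $\rmH^0_\m(C) \cong \rmH^1_\m(A)$ and $\rmH^1_\m(C) \cong \rmH^2_\m(A)$. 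In particular $\rmH^2_\m(A) \ne (0)$, so $\depth A \le 2$; since $\depth A \ge 1$, the cases $\depth_A C = 0$ and $\depth_A C = 1$ correspond precisely to $\depth A = 1$ (with $\rmH^1_\m(A) \ne (0)$) and $\depth A = 2$ (with $\rmH^1_\m(A) = (0)$).

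The only ingredient that is a genuine computation rather than bookkeeping with long exact sequences is the identification $\rmH^1_\m(R) \cong A/\m$ for the glued ring $R$; I do not anticipate any real difficulty there, as it is forced by the Cohen--Macaulayness of the two regular local rings being glued together with the standing hypothesis $m \ge 2$.
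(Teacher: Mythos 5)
Your argument is correct and follows essentially the same route as the paper: the length comparison $\ell_A(\rmH_\m^1(A/I))=2$ versus $\ell_A(\rmH_\m^1(B/I))\ge 3$ in the long exact sequence of $0 \to A/I \to B/I \to C \to 0$, followed by reading off the depths from $0 \to A \to B \to C \to 0$. The only cosmetic difference is that you compute $\rmH_\m^1(B/I)$ exactly (length $3$) via the conductor sequence of each glued summand, whereas the paper only needs the lower bound $\ge 3$ coming from $\depth_A A/(I+\fkp_i)=1$.
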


In fact, apply the functor $\rmH_\m^i(*)$ to the exact sequence $0 \to A/I \to B/I \to C \to 0$, and consider the long exact sequence
$$
0 \to \rmH_m^0(C) \to \rmH_m^1(A/I) \to \rmH_m^1(B/I) \to \rmH_m^1(C)
$$
of local cohomology modules. We then have $\rmH_m^1(C) \ne (0)$, because $\ell_A(\rmH_m^1(A))=2$ and $\ell_A(\rmH_m^1(B/I)) \ge 3$. We however eventually get the following.

\begin{prop}\label{3.7}
$\dim A = 2m \ge 4$, $\depth A = 2$, and $\widetilde{A}=B$ is a regular ring.
\end{prop}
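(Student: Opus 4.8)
The plan is to split the proposition into its ``structural'' assertions --- $\dim A = 2m \ge 4$, $\widetilde{A} = B$, and $B$ a regular ring --- which follow quickly from Corollary \ref{prop2} and the evident product structure of $B$, and the depth assertion $\depth A = 2$, which carries the real content. For the structural part: $d = \dim A = m(\ell - 1) = 2m$, and this is $\ge 4$ because $m \ge 2$. Also $B = \bigoplus_{i=1}^{3} T/P_i$ is a finite direct product of regular local rings (each $P_i$ is generated by $m$ members of a regular system of parameters of $T$, so each $T/P_i$ is a power series ring over $k$ of dimension $2m$), hence $B$ is a regular ring, in particular Gorenstein; and $A = T/\fka$ is reduced, with reduced primary decomposition $\fka = P_1 \cap P_2 \cap P_3$ all of whose components have dimension $2m$, so $A$ is an equidimensional complete local ring and therefore quasi-unmixed. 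Since moreover $\height_A(A:B) = \height_A I = m \ge 2$, Corollary \ref{prop2} applies with its ``$\fka$'' equal to $A:B = I$, and yields $B = I:I = \widetilde{A}$, $\depth_A B = d = 2m$, and $\rmK_A \cong B$ as an $A$-module. This settles everything except $\depth A = 2$.

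By Claim \ref{claim} we already know $\depth A \in \{1,2\}$, so it suffices to prove $\depth A \ge 2$. I would \emph{not} use the short exact sequence $0 \to A/I \to B/I \to C \to 0$ exploited for Claim \ref{claim}: deducing $\depth A \ge 2$ from it would require knowing that $\rmH^1_\m(A/I) \to \rmH^1_\m(B/I)$ is injective, which is not transparent. Instead I would use the Mayer--Vietoris exact sequence
$$0 \longrightarrow A \longrightarrow \frac{T}{P_1} \oplus \frac{T}{P_2 \cap P_3} \longrightarrow \frac{T}{P_1 + (P_2 \cap P_3)} \longrightarrow 0$$
attached to the decomposition $\fka = P_1 \cap (P_2 \cap P_3)$. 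Here $T/P_1$ is regular of dimension $2m$; writing $T = k[[X_1, \ldots, X_m]]\,\widehat{\otimes}_k\,T'$ with $T' = k[[Y_1, \ldots, Y_m, Z_1, \ldots, Z_m]]$, one has $T/(P_2 \cap P_3) \cong k[[X_1, \ldots, X_m]]\,\widehat{\otimes}_k\,\bigl(T'/[(Y_1, \ldots, Y_m) \cap (Z_1, \ldots, Z_m)]\bigr)$, and the second factor is reduced of positive dimension, hence of depth $\ge 1$, so $\depth_A\bigl(T/(P_2 \cap P_3)\bigr) \ge m + 1 \ge 3$; thus the middle term has depth $\ge m + 1$. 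Finally $T/(P_1 + P_2 \cap P_3) \cong A/(I + \fkp_1)$, which has depth $1$ (as recorded in the discussion above). The depth lemma applied to the displayed sequence then gives $\depth A \ge \min\{\, m+1,\ 1+1 \,\} = 2$, hence $\depth A = 2$.

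The main obstacle is exactly the lower bound $\depth A \ge 2$: once one notices that the exact sequence already in play does not deliver it, the remedy --- passing to the ``$I$-free'' Mayer--Vietoris sequence coming from the primary decomposition of $\fka$ --- reduces the matter to the (standard) depth computations for $T/P_1$, $T/(P_2 \cap P_3)$, and $T/(P_1 + P_2 \cap P_3)$, all of which are routine.
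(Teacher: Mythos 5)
Your proposal is correct, and the structural part (dimension, $B=\widetilde{A}$, regularity of $B$ via Corollary \ref{prop2}) matches what the paper relies on; but for the essential inequality $\depth A \ge 2$ you take a genuinely different route. The paper's own proof exhibits the explicit $A$-regular element $X_1+Y_1$ and then writes down a primary decomposition of the ideal defining $A/(X_1+Y_1)A$ in order to conclude that this quotient still has depth $\ge 1$, whence $\depth A \ge 2$; combined with Claim \ref{claim} this pins down $\depth A = 2$, exactly as you do. Your argument instead applies the depth lemma to the Mayer--Vietoris sequence
$0 \to A \to T/P_1 \oplus T/(P_2 \cap P_3) \to T/(P_1 + P_2 \cap P_3) \to 0$
attached to the decomposition $\fka = P_1 \cap (P_2 \cap P_3)$, using $\depth_A\bigl(T/(P_2\cap P_3)\bigr) \ge m+1$ and $\depth_A\bigl(T/(P_1+P_2\cap P_3)\bigr) = 1$ to get $\depth A \ge \min\{m+1,2\} = 2$. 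Both are valid; your version buys a cleaner, computation-free argument that sidesteps the somewhat delicate explicit primary decomposition of $J$ in the paper (whose verification, including the absence of an $\m$-primary component, is left implicit there), while the paper's version has the minor advantage of producing a concrete regular sequence in $A$. Your remark that the sequence $0 \to A/I \to B/I \to C \to 0$ used for Claim \ref{claim} cannot by itself yield the lower bound is well taken and correctly identifies why a different sequence is needed.
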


\begin{proof}
Since $X_1 + Y_1 \not\in \bigcup_{i=1}^3P_i$, $X_1+Y_1$ is $A$-regular, and 
$$
A/(X_1+Y_1)A \cong k[[\{X_i\}_{2 \le i \le m}, \{Y_i\}_{1 \le i \le m}, \{Z_i\}_{1 \le i \le m}]]/J,
$$
where $$J= (Y_1, X_2, \ldots, X_m) \cap (Y_1, Y_2, \ldots, Y_m) \cap (Y_1^2, X_2 \ldots, X_m, Y_2, \ldots, Y_m) \cap (Z_1, Z_2, \ldots, Z_m).$$ Therefore, $\depth A \ge 2$, since $\depth A/(X_1+Y_1)A \ge 1$. Hence, $\depth A=2$ and $\depth_AC=1$ by Claim \ref{claim}. 
\end{proof}

\subsection{The case where $\depth A \ge 3$}
Let us now construct the examples of $A$ such that $\depth A \ge 3$. Let $q,m$ be integers such that $3 \le q < m$ and set $n = 2m$. We choose $F_1, F_2, F_3$ so that
$F_1 = \{1,2,\ldots, m \}$, $F_2 = \{q, q+1, \ldots, m+q-1 \}$, and $F_3 = \{m+1, m+2, \ldots, n\}$. Hence, $|F_i|=m$ for all $i$, and
\begin{center}
$|F_1 \setminus F_2|=q-1,  \ |F_2 \setminus F_3|=m-q+1$, \   $|F_3 \setminus F_1|=m$.
\end{center}
Therefore, $\height_AI=\min \{m-q+1, q-1\}$ and $\dim A= m$. Let $x_i$ denote the image of $X_i$ in $A$. Then\begin{eqnarray*}
I&=& \fkp_1 \cap \fkp_2 + \fkp_2 \cap \fkp_3 + \fkp_3 \cap \fkp_1\\
&=& (x_q, x_{q+1}, \ldots, x_{m+q-1})+(x_{1},x_2, \ldots, x_{q-1}){\cdot}(x_{m+q}, x_{m+q+1}, \ldots,x_n)\\
&=&(x_1, x_2, \ldots, x_{m+q-1}) \cap (x_q, x_{q+1}, \ldots, x_n),\\
\vspace{-2em}
\end{eqnarray*}
while
$I + \fkp_1 =(x_1, x_2, \ldots, x_{m+q-1})$, $I \supseteq \fkp_2$, and $I + \fkp_3= (x_q, x_{q+1},\ldots, x_n)$, 
so that $$B/I = A/[I + \fkp_1] \oplus A/I \oplus A/[I + \fkp_3].$$
  We set $C = B/A$. Then, the canonical embedding $A/I \to B/I$ is a split monomorphism of $A$-modules, and we get $C =A/[I + \fkp_1] \oplus A/[I + \fkp_3]$. Consequently
$$\dim_AC= \max \{m-q+1, q-1\}, \ \ \depth_AC = \min \{m-q+1, q-1\},$$ and thanks to the exact sequences
$0 \to A \to B \to C \to 0$ and 
$$0 \to A/I \to A/(x_1, x_2, \ldots, x_{m+q-1}) \oplus A/(x_q, x_{q+1}, \ldots, x_n) \to A/\m \to 0,$$ we get the following.

\begin{prop}\label{3.6}
$\depth A = \min \{m-q+1, q-1\} + 1 \ge 3$ and $\depth A/I = 1$.
\end{prop}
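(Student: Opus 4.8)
The plan is to read off $\depth A$ from the exact sequence $0 \to A \to B \to C \to 0$, and $\depth A/I$ from the second sequence displayed above, in both cases by combining a depth count of the outer terms with the depth lemma. First I would collect the depths already available. By the Theorem proved above for a general family $\{F_i\}$ with $|F_i|$ constant and $|F_i \setminus F_j| \ge 2$ for $i \ne j$ — whose hypotheses hold here, since $|F_i| = m$ for all $i$ and $|F_i \setminus F_j| \in \{q-1,\, m-q+1,\, m\}$ is $\ge 2$ by $3 \le q < m$ — we have $\depth_A B = d = m$. Next, from the splitting $C = A/[I+\fkp_1] \oplus A/[I+\fkp_3]$ obtained above, together with $A/[I+\fkp_1] \cong k[[x_{m+q}, \ldots, x_n]]$ and $A/[I+\fkp_3] \cong k[[x_1, \ldots, x_{q-1}]]$ (power series rings, hence regular local, of dimensions $m-q+1$ and $q-1$), we get $\depth_A C = \min\{m-q+1,\, q-1\}$.

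The key numerical observation is that $3 \le q \le m-1$ forces $2 \le \min\{m-q+1,\, q-1\} \le m-2$, so $\depth_A C + 1 \le m-1 < m = \depth_A B$. Feeding this strict inequality into the depth lemma for $0 \to A \to B \to C \to 0$ pins down $\depth A$: the estimate $\depth A \ge \min\{\depth_A B,\, \depth_A C + 1\}$ gives $\depth A \ge \depth_A C + 1$, while $\depth_A C \ge \min\{\depth A - 1,\, \depth_A B\}$ together with $\depth_A C < \depth_A B$ forces $\depth A - 1 \le \depth_A C$. Hence $\depth A = \depth_A C + 1 = \min\{m-q+1,\, q-1\} + 1 \ge 3$.

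For $\depth A/I$, I would use the Mayer--Vietoris type exact sequence
\[
0 \to A/I \to A/(x_1, \ldots, x_{m+q-1}) \oplus A/(x_q, \ldots, x_n) \to A/\m \to 0,
\]
legitimate because $I = (x_1, \ldots, x_{m+q-1}) \cap (x_q, \ldots, x_n)$ while $(x_1, \ldots, x_{m+q-1}) + (x_q, \ldots, x_n) = \m$ (the two index ranges overlap and together exhaust $\{1, \ldots, n\}$). The middle module is a direct sum of the two regular rings above, hence has depth $\min\{m-q+1,\, q-1\} \ge 2$, whereas $\depth_A A/\m = 0$; since $0 + 1 < 2 \le \depth$ of the middle module, the depth lemma again forces $\depth A/I = \depth_A(A/\m) + 1 = 1$.

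The genuinely delicate part lies not in these two depth-lemma arguments but in the structural inputs being invoked: the equalities $I + \fkp_1 = (x_1, \ldots, x_{m+q-1})$, $\fkp_2 \subseteq I$, $I + \fkp_3 = (x_q, \ldots, x_n)$, $I = (x_1, \ldots, x_{m+q-1}) \cap (x_q, \ldots, x_n)$, and the splitting of $A/I \hookrightarrow B/I$ caused by $A/[I+\fkp_2] = A/I$ being one of the summands of $B/I$. All of these have been carried out in the discussion immediately preceding the statement, so the proof proper only has to assemble them. The one point that must not be skipped is the verification of the strict inequality $\depth_A C < \depth_A B$ (equivalently of $q \ge 3$ and $q < m$), since it is exactly this strictness that upgrades the depth-lemma inequalities to the claimed equality for $\depth A$.
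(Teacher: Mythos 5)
Your proof is correct and follows essentially the same route as the paper: the paper derives the proposition precisely from the two exact sequences $0 \to A \to B \to C \to 0$ and $0 \to A/I \to A/(x_1,\ldots,x_{m+q-1}) \oplus A/(x_q,\ldots,x_n) \to A/\m \to 0$ via the depth lemma, using the same identifications of the outer terms. Your write-up merely makes explicit the strict inequality $\depth_A C < \depth_A B$ that the paper leaves implicit.
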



\begin{thebibliography}{20}

\bibitem{AF}
{\sc M. D'Anna and M. Fontana}, An amalgamated duplication of a ring along an ideal: the basic properties, {\em J. Algebra Appl.}, {\bf 6} (2007), 443--459.

\bibitem{A}{\sc Y. Aoyama}, Some basic results on canonical modules, {\em J. Math. Kyoto Univ.}, {\bf 23} (1983), 85--94.

\bibitem{AG}{\sc Y. Aoyama and S. Goto}, On the endomorphism ring of the canonical module, {\em J. Math. Kyoto Univ.}, {\bf 25} (1985), 21--30. 

\bibitem{G1}{\sc S. Goto}, On the Cohen-Macaulayfication of certain Buchsbaum rings, {\em Nagoya Math. J.}, {\bf  80} (1980), 107--116. 

\bibitem{G2}{\sc S. Goto}, On Buchsbaum rings obtained by gluing, {\em Nagoya Math. J.}, {\bf 83} (1981), 123--135. 

\bibitem{GS}{\sc S. Goto and Y. Shimoda}, On the Rees algebras of Cohen-Macaulay local rings, Commutative algebra (Fairfax, Va., 1979), 201--231, Lecture Notes in Pure and Appl. Math., {\bf 68}, Dekker, New York, 1982.

\bibitem{GS2}
{\sc S. Goto and Y. Shimoda}, On Rees algebras over Buchsbaum rings, {\em J. Math. Kyoto Univ.}, {\bf 20} (1980), 691--708. 

\bibitem{GI}
{\sc S. Goto and S. Iai}, Gorensteinness in Rees algebras of powers of parameter ideals, arXiv:2112.06676.
%%%%% 何年のpreprintかな？ %%%%%%%%%%%%%%%%%%%%%%%%%%%

\bibitem{EGI}{\sc S. Goto, R. Isobe, and N. Taniguchi}, Ulrich ideals and 2-AGL rings, {\em J. Algebra}, {\bf 555} (2020), 96--130.

\bibitem{HK}
{\sc J. Herzog and E. Kunz}, Der kanonische Modul eines Cohen-Macaulay-Rings, Lecture Notes in Mathematics, 238, {\em Springer-Verlag, Berlin-New York}, 1971.

\bibitem{Hoa}
{\sc L. T. Hoa}, Reduction numbers and Rees algebras of powers of an Ideal, {\em Proc. Amer. Math. Soc.}, {\bf 119} (1993), no. 2, 415--422. 

\bibitem{kawasaki}
{\sc T. Kawasaki}, On arithmetic Macaulayfication of Noetherian rings, {\em Trans. Amer. Math. Soc.}, {\bf 354} (2001), 123--149.

\bibitem{Lindo}
{\sc H. Lindo}, Self-injective commutative rings have no nontivial rigid ideals, arXiv:1710.01793.

\bibitem{Ratliff}{\sc L. J.  Ratliff Jr.}, On quasi-unmixed local domains, the altitude formula, and the chain condition for prime ideals, (I), {\em Amer. J. Math.}, {\bf 91} (1969), no. 2, 508--528.

\bibitem{SV}
{\sc J. St\"uckrad and W. Vogel}, Buchsbaum rings and applications. An interaction between algebra, geometry and topology. Springer-Verlag, Berlin, 1986.

\end{thebibliography}
\end{document}